\documentclass[letterpaper]{article}

\usepackage{charter}
\usepackage[utf8]{inputenc}
\usepackage[T1]{fontenc}
\usepackage[dvipsnames]{xcolor}
\usepackage[bookmarksnumbered]{hyperref}
\usepackage{enumitem}
\usepackage{amssymb}
\usepackage{amsmath}
\usepackage{amsthm}
\usepackage{stmaryrd}
\usepackage{algorithm}
\usepackage{algorithmic}
\usepackage{mathtools}
\usepackage{tikz}
\usepackage[numbers]{natbib}
\usepackage{booktabs}
\usepackage[font=small]{caption}
\usepackage[bottom]{footmisc}
\usepackage{parskip}
\usepackage{pifont}

\renewcommand{\leq}{\leqslant}
\renewcommand{\geq}{\geqslant}
\renewcommand{\epsilon}{\varepsilon}
\DeclarePairedDelimiter\floor{\lfloor}{\rfloor}

\DeclareMathOperator*{\argmin}{\mathrm{arg\,min}}

\hypersetup{
    colorlinks=true,
    linkcolor={red!60!black},
    citecolor={green!40!black}
}

\hoffset       -45pt
\voffset       -30pt
\textwidth     435pt
\textheight    610pt
\parindent     15pt

\newtheorem{theorem}{Theorem}[section]
\newtheorem{corollary}[theorem]{Corollary}
\newtheorem{fact}[theorem]{Fact}
\newtheorem{lemma}[theorem]{Lemma}

\newtheorem{remark}[theorem]{Remark}

\newtheorem{assumption}[theorem]{Assumption}

\begin{document}

\begin{center}
 {\Large Revisiting the Approximate Carath\'eodory Problem\\ via the Frank-Wolfe Algorithm}
\end{center}

\vspace{7mm}

\noindent\textbf{Cyrille W.\ Combettes}\textsuperscript{ 1 3}\hfill\href{mailto:cyrille@gatech.edu}{\ttfamily cyrille@gatech.edu}\\
\\
\textbf{Sebastian Pokutta}\textsuperscript{ 2 3}\hfill\href{mailto:pokutta@zib.de}{\ttfamily pokutta@zib.de}\\
\\
{\small\textsuperscript{1 }\emph{School of Industrial and Systems Engineering, Georgia Institute of Technology, USA}}\\
{\small\textsuperscript{2 }\emph{Institute of Mathematics, Technische Universit\"at Berlin, Germany}}\\
{\small\textsuperscript{3 }\emph{Department for AI in Society, Science, and Technology, Zuse Institute Berlin, Germany}}

\vspace{7mm}

\begin{center}
\begin{minipage}{0.9\textwidth}
\begin{center}
 \textbf{Abstract}
\end{center}
\vspace{3mm}
  {\small The approximate Carath\'eodory theorem states that given a compact convex set $\mathcal{C}\subset\mathbb{R}^n$ and $p\in\left[2,+\infty\right[$, each point $x^*\in\mathcal{C}$ can be approximated to $\epsilon$-accuracy in the $\ell_p$-norm as the convex combination of $\mathcal{O}(pD_p^2/\epsilon^2)$ vertices of $\mathcal{C}$, where $D_p$ is the diameter of $\mathcal{C}$ in the $\ell_p$-norm. A solution satisfying these properties can be built using probabilistic arguments or by applying mirror descent to the dual problem. We revisit the approximate Carath\'eodory problem by solving the primal problem via the Frank-Wolfe algorithm, providing a simplified analysis and leading to an efficient practical method. Furthermore, improved cardinality bounds are derived naturally using existing convergence rates of the Frank-Wolfe algorithm in different scenarios, when $x^*$ is in the interior of $\mathcal{C}$, when $x^*$ is the convex combination of a subset of vertices with small diameter, or when $\mathcal{C}$ is uniformly convex. We also propose cardinality bounds when $p\in\left[1,2\right[\cup\{+\infty\}$ via a nonsmooth variant of the algorithm. Lastly, we address the problem of finding sparse approximate projections onto $\mathcal{C}$ in the $\ell_p$-norm, $p\in\left[1,+\infty\right]$.}
\end{minipage}
\end{center}

\vspace{2mm}

\section{Introduction}

Let $\mathcal{C}\subset\mathbb{R}^n$ be a compact convex set and $x^*\in\mathcal{C}$. Suppose that we are interested in expressing $x^*$ as the convex combination of as few vertices of $\mathcal{C}$ as possible. Motivations for this may lie in, e.g., memory space, computation time, or model interpretability. Then Carath\'eodory's theorem \cite{cara1907} states that this can be achieved with less than $n+1$ vertices, and this bound is tight. However, in the case where we can afford an $\epsilon$-approximation in the $\ell_p$-norm, where $p\in\left[1,+\infty\right]$, can we reduce it to just $m$ points with $m$ being significantly smaller than $n+1$?

We address the \emph{approximate} Carath\'eodory problem, which aims at finding a point $x\in\mathcal{C}$ that is the convex combination of a small number of vertices and satisfying $\|x-x^*\|_p\leq\epsilon$. Let the cardinality of $x$, with respect to a given convex decomposition, be the number of vertices in the decomposition. When $p\in\left[2,+\infty\right[$, the approximate Carath\'eodory theorem states that there exists a solution with cardinality $\mathcal{O}(pD_p^2/\epsilon^2)$, where $D_p$ is the diameter of $\mathcal{C}$ in the $\ell_p$-norm \cite{barman15cara}. The bound is independent of the dimension $n$, and it is therefore very significant in high-dimensional spaces as it shows that we can obtain extremely sparse solutions. Applications in game theory (Nash equilibria) and combinatorial optimization (densest $k$-subgraphs) are presented in \cite{barman15cara}.

The approximate Carath\'eodory theorem can be proved using Maurey's lemma \cite{pisier81}. A similar proof is presented in \cite{barman15cara}, which consists in solving the exact Carath\'eodory problem and then reducing the number of vertices by sampling. A lower bound $\Omega((D_p/\epsilon)^{p/(p-1)})$ on the cardinality is also provided. Later on, a new proof using only deterministic arguments was proposed in \cite{mirrokni17cara}, by building the solution via mirror descent \cite{nemirovsky83md}. This is particularly useful in practice since the method in \cite{barman15cara} is expensive, as solving the exact Carath\'eodory problem has complexity polynomial in $n$ even when the vertices are known \cite{maalouf19}\footnote{In \cite[Thm.~3.1]{maalouf19}, the dimension of the ambient space is denoted by $d$.}. Furthermore, it is proved in \cite{mirrokni17cara} that if $x^*$ is in the interior of $\mathcal{C}$, then a solution with cardinality $\mathcal{O}(p(D_p/r_p)^2\ln(1/\epsilon))$ can be found, where $r_p>0$ denotes the radius of the ball centered at $x^*$ and included in $\mathcal{C}$. Finally, they improved the lower bound to $\Omega(pD_p^2/\epsilon^2)$, thus establishing the optimality of the approximate Carath\'eodory theorem in the general setting.

When $p=+\infty$, there exists a solution with cardinality $\mathcal{O}(\ln(n)D_\infty^2/\epsilon^2)$ \cite{barman15cara}. When $p\in\left]1,2\right[$, a cardinality bound $\mathcal{O}((1/p)^{1/(p-1)}(D_p/\epsilon)^{p/(p-1)})$ can be derived from Maurey's lemma; see \cite[Lem.~D]{bourgain89} and \cite{ivanov19}. In the more general setting of uniformly smooth Banach spaces, an approximate Carath\'eodory theorem was recently proposed in \cite{ivanov19}.

\begin{table}[h]
 \caption{Cardinality bounds to achieve $\epsilon$-convergence in the approximate Carath\'eodory problem with respect to the $\ell_p$-norm. All our bounds are obtained via the Frank-Wolfe algorithm or variants.}
 \label{tab:lit}
 \centering{
 \begin{tabular}{llll}
  \toprule
  \textbf{$\ell_p$-norm}&\textbf{Assumption}&\multicolumn{2}{c}{\textbf{Cardinality bound}}\\
  \cmidrule(lr){3-4}
  &&This paper&Related work\\
  \midrule
  $p\in\left[2,+\infty\right[$&--&$\displaystyle\mathcal{O}\!\left(\frac{pD_p^2}{\epsilon^2}\right)$ or $\displaystyle\mathcal{O}\!\left(\frac{p(D_*^2+D_0^2)}{\epsilon^2}\right)$&$\displaystyle\mathcal{O}\!\left(\frac{pD_p^2}{\epsilon^2}\right)$ \cite{barman15cara,mirrokni17cara,ivanov19}\\
  &&\hfill (Corollaries~\ref{cor:fw}--\ref{cor:nep})&\\
  \cmidrule(lr){2-4}
  &$x^*\in\operatorname{int}\mathcal{C}$&$\displaystyle\mathcal{O}\!\left(p\left(\frac{D_p}{r_p}\right)^2\ln\!\left(\frac{1}{\epsilon}\right)\right)$&$\displaystyle\mathcal{O}\!\left(p\left(\frac{D_p}{r_p}\right)^2\ln\!\left(\frac{1}{\epsilon}\right)\right)$\\
  &&\hfill (Corollary~\ref{cor:int})&\hfill \cite{mirrokni17cara}\\
  \cmidrule(lr){2-4}
  &$\mathcal{C}$ is $\alpha_p$-strongly convex&$\displaystyle\mathcal{O}\!\left(\frac{\sqrt{p}D_p+p/\alpha_p}{\epsilon}\right)$&--\\
  &&\hfill (Corollary~\ref{cor:t2})&\\
  \cmidrule(lr){2-4}
  &$\mathcal{C}$ is $(\alpha_p,q_p)$-uniformly&$\displaystyle\mathcal{O}\!\left(\frac{(pD_p^2)^{(q_p-1)/q_p}+p/\alpha_p^{2/q_p}}{\epsilon^{2(q_p-1)/q_p}}\right)$&--\\
  &\hfill convex, $q_p\in\left[2,+\infty\right[$&\hfill (Corollary~\ref{cor:uni2})&\\
  \midrule
  $p\in\left]1,2\right[$&--&$\displaystyle\mathcal{O}\!\left(\frac{n^{(2-p)/p}D_2^2}{\epsilon^2}\right)$&$\displaystyle\mathcal{O}\!\left(\frac{D_p^{p/(p-1)}}{p^{1/(p-1)}\epsilon^{p/(p-1)}}\right)$\\
  &&\hfill (Corollaries~\ref{cor:hcgs} and~\ref{cor:fw:fbeta})&\hfill\cite{bourgain89,ivanov19}\\
  \midrule
  $p=1$&--&$\displaystyle\mathcal{O}\!\left(\frac{nD_2^2}{\epsilon^2}\right)$&--\\
  &&\hfill (Corollaries~\ref{cor:hcgs} and~\ref{cor:fw:fbeta})&\\
  \midrule
  $p=+\infty$&--&$\displaystyle\mathcal{O}\!\left(\frac{D_2^2}{\epsilon^2}\right)$&$\displaystyle\mathcal{O}\!\left(\frac{\ln(n)D_\infty^2}{\epsilon^2}\right)$ \cite{barman15cara}\\
  &&\hfill (Corollaries~\ref{cor:hcgsinf} and~\ref{cor:fw:fbetainf})&\\
  \bottomrule
 \end{tabular}
 }
\end{table}

\paragraph{Contributions.} We address the approximate Carath\'eodory problem in the $\ell_p$-norm via the Frank-Wolfe algorithm (FW). We cover the whole range $p\in\left[1,+\infty\right]$, with a slight modification of FW when $p\in\left[1,2\right[\cup\{+\infty\}$. When $p\in\left[2,+\infty\right[$, we recover the cardinality bound $\mathcal{O}(pD_p^2/\epsilon^2)$ by addressing the primal problem directly. This is in contrast with the approach in \cite{mirrokni17cara}, which consists of formulating the dual problem and solving it via mirror descent; although it is pointed out that this selects the exact same set of vertices as if FW was applied to the primal problem \cite{bach15dualfw}, our analysis is much simpler. Moreover, the method in \cite{mirrokni17cara} for the case $x^*\in\operatorname{int}\mathcal{C}$ requires restarting mirror descent and knowledge of the radius $r_p$, which may not be available. We show that a direct application of FW generates the desired solution, i.e., that FW is adaptive to the properties of the problem. Our approach further provides improved cardinality bounds when $x^*$ is the convex combination of a subset of vertices with small diameter or when $\mathcal{C}$ is uniformly convex. When $p\in\left[1,2\right[$, we build a solution with cardinality $\mathcal{O}(n^{(2-p)/p}D_2^2/\epsilon^2)$ via a nonsmooth variant of FW. This improves the dependence to $\epsilon$ in the previous known bound for $p\in\left]1,2\right[$ but involves a term (at most linear) in the dimension $n$. The nonsmooth FW variant also finds a solution with cardinality $\mathcal{O}(D_2^2/\epsilon^2)$ when $p=+\infty$, which is dimension-free compared to the result in \cite{barman15cara}. Finally, we address the problem of finding sparse approximate projections in the $\ell_p$-norm.

\paragraph{Outline.} The bulk of the paper considers the case $p\in\left[2,+\infty\right[$. We introduce notation and definitions in Section~\ref{sec:prelim}. In Section~\ref{sec:fw}, we show that the Frank-Wolfe algorithm is an intuitive method to solve the approximate Carath\'eodory problem, and we review its convergence analyses in Section~\ref{sec:cv}. In Section~\ref{sec:cara}, we prove that it solves the approximate Carath\'eodory theorem and that it also generates improved cardinality bounds in different scenarios. In Section~\ref{sec:12}, we analyze the case $p\in\left[1,2\right[\cup\{+\infty\}$ via a nonsmooth variant of the Frank-Wolfe algorithm. In Section~\ref{sec:proj}, we address the problem of finding sparse approximate projections. We present computational experiments in Section~\ref{sec:exp}. We briefly mention in Section~\ref{sec:lower} a correction to the lower bound $\Omega(1/\epsilon^2)$ presented in \cite[Sec.~5.1]{mirrokni17cara}.

\section{Notation and definitions}
\label{sec:prelim}

We consider the Euclidean space $(\mathbb{R}^n,\langle\cdot,\cdot\rangle)$ and an arbitrary norm $\|\cdot\|$. The dual norm of $\|\cdot\|$ is $\|\cdot\|_*\colon y\in\mathbb{R}^n\mapsto\sup_{\|x\|\leq1}\langle x,y\rangle$. For all $x\in\mathbb{R}^n$ and $i\in\llbracket1,n\rrbracket$, let $[x]_i$ be the $i$-th entry of $x$. Given $p\geq1$, the $\ell_p$-norm is $\|\cdot\|_p\colon x\in\mathbb{R}^n\mapsto\left(\sum_{i=1}^n|[x]_i|^p\right)^{1/p}$. For any closed convex set $\mathcal{C}\subset\mathbb{R}^n$, the projection operator onto $\mathcal{C}$ and the distance function to $\mathcal{C}$ in the $\ell_p$-norm are denoted by $\operatorname{proj}_p(\cdot,\mathcal{C})$ and $\operatorname{dist}_p(\cdot,\mathcal{C})$ respectively. For any two sets $\mathcal{A}$ and $\mathcal{B}$ of $\mathbb{R}^n$, $\mathcal{A}$ is included in $\mathcal{B}$, and we write $\mathcal{A}\subset\mathcal{B}$, if for all $x\in\mathcal{A}$, it holds $x\in\mathcal{B}$. The relative interior of a set $\mathcal{C}\subset\mathbb{R}^n$ is denoted by $\operatorname{int}\mathcal{C}$. It is independent of the norm. Given a compact convex set $\mathcal{C}\subset\mathbb{R}^n$, the cardinality of a point $x\in\mathcal{C}$, with respect to a given convex decomposition onto the vertices of $\mathcal{C}$, is the number of vertices (with positive weights) in the decomposition. Informally, we say that $x$ is sparse if it has low cardinality.

A set $\mathcal{C}\subset\mathbb{R}^n$ is $(\alpha,q)$-uniformly convex with respect to $\|\cdot\|$ if $\alpha,q>0$ and for all $x,y\in\mathcal{C}$, $\gamma\in\left[0,1\right]$, and $z\in\mathbb{R}^n$ with $\|z\|=1$,
\begin{align*}
(1-\gamma)x+\gamma y+(1-\gamma)\gamma\alpha\|x-y\|^qz\in\mathcal{C}.
\end{align*}
If $q=2$, we say that $\mathcal{C}$ is $\alpha$-strongly convex with respect to $\|\cdot\|$. Examples of such sets are discussed in, e.g., \cite{kerdreux21}.

Let $\mathcal{C}\subset\mathbb{R}^n$ be a convex set and $f\colon\mathbb{R}^n\to\mathbb{R}$ be a function. Then:
\begin{enumerate}[label=(\roman*)]
 \item $f$ is $G$-Lipschitz-continuous on $\mathcal{C}$ with respect to $\|\cdot\|$ if $G>0$ and for all $x,y\in\mathcal{C}$,
 \begin{align*}
  |f(y)-f(x)|
  \leq G\|y-x\|.
 \end{align*}
 \item $f$ is $L$-smooth on $\mathcal{C}$ with respect to $\|\cdot\|$ if $f$ is differentiable on $\mathcal{C}$, $L>0$, and for all $x,y\in\mathcal{C}$,
\begin{align*}
 f(y)
 \leq f(x)+\langle y-x,\nabla f(x)\rangle+\frac{L}{2}\|y-x\|^2.
\end{align*}
 \item\label{def:sc} $f$ is $S$-strongly convex on $\mathcal{C}$ with respect to $\|\cdot\|$ if $f$ is differentiable on $\mathcal{C}$, $S>0$, and for all $x,y\in\mathcal{C}$,
\begin{align*}
 f(y)
 \geq f(x)+\langle y-x,\nabla f(x)\rangle+\frac{S}{2}\|y-x\|^2.
\end{align*}
\item\label{def:sh} $f$ is $\sigma$-sharp on $\mathcal{C}$ with respect to $\|\cdot\|$ if $\mathcal{C}$ is compact, $\sigma>0$, and for all $x\in\mathcal{C}$,
\begin{align*}
 \min_{x^*\in\argmin_\mathcal{C}f}\|x-x^*\|
 \leq\sigma\sqrt{f(x)-\min_\mathcal{C}f}.
\end{align*}
\item\label{def:gd} $f$ is $\mu$-gradient dominated on $\mathcal{C}$ with respect to $\|\cdot\|$ if $f$ is differentiable on $\mathcal{C}$, $\mu>0$, $\argmin_\mathcal{C}f\neq\varnothing$, and for all $x\in\mathcal{C}$,
\begin{align*}
 f(x)-\min_\mathcal{C}f
 \leq\frac{\|\nabla f(x)\|_*^2}{2\mu}.
\end{align*}
\end{enumerate}

Note that if $f$ is gradient dominated on $\mathbb{R}^n$, then it is gradient dominated on any convex set $\mathcal{C}\subset\mathbb{R}^n$. Facts~\ref{fact:scsh}--\ref{fact:shgd} show the connection between definitions~\ref{def:sc}--\ref{def:gd}. Definition~\ref{def:gd} is often referred to as the Polyak-\L ojasiewicz inequality \cite{polyak63pl,lojo63}. It is a special case of the Kurdyka-\L ojasiewicz property, named after \cite{kurdyka98,lojo63}, satisfied by a very large class of functions \cite{bolte07b} and thus very useful for analyzing optimization algorithms \cite{bolte10a,bolte13,bolte14,bolte17}.

\begin{fact}
 \label{fact:scsh}
 Let $\mathcal{C}\subset\mathbb{R}^n$ be a compact convex set and $f\colon\mathbb{R}^n\to\mathbb{R}$ be differentiable on $\mathcal{C}$. If $f$ is $S$-strongly convex on $\mathcal{C}$ with respect to $\|\cdot\|$, then $f$ is $\sqrt{2/S}$-sharp on $\mathcal{C}$ with respect to $\|\cdot\|$.
\end{fact}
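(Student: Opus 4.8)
The plan is to derive the sharpness inequality directly from the definition of strong convexity, using the first-order optimality condition at a minimizer as the crucial ingredient. Since $\mathcal{C}$ is compact and $f$ is differentiable (hence continuous), $f$ attains its minimum on $\mathcal{C}$; fix any $x^*\in\argmin_\mathcal{C}f$. The idea is to apply the strong convexity inequality with base point $x^*$ and arbitrary second point $x\in\mathcal{C}$, and then eliminate the linear term using optimality.

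First I would instantiate the definition of $S$-strong convexity with $x^*$ in the role of the base point and an arbitrary $x\in\mathcal{C}$, obtaining $f(x)\geq f(x^*)+\langle x-x^*,\nabla f(x^*)\rangle+\frac{S}{2}\|x-x^*\|^2$. Next, because $x^*$ minimizes $f$ over the convex set $\mathcal{C}$, the first-order optimality condition gives $\langle x-x^*,\nabla f(x^*)\rangle\geq0$ for every $x\in\mathcal{C}$: this follows by considering the one-dimensional function $t\mapsto f(x^*+t(x-x^*))$ along the feasible segment $[x^*,x]\subset\mathcal{C}$, which attains its minimum at $t=0$, so its right derivative there, namely $\langle x-x^*,\nabla f(x^*)\rangle$, is nonnegative. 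Dropping this nonnegative term yields $f(x)-\min_\mathcal{C}f\geq\frac{S}{2}\|x-x^*\|^2$, and rearranging followed by taking square roots produces $\|x-x^*\|\leq\sqrt{2/S}\,\sqrt{f(x)-\min_\mathcal{C}f}$. Since $\min_{x^*\in\argmin_\mathcal{C}f}\|x-x^*\|$ is at most this particular $\|x-x^*\|$, the claimed $\sqrt{2/S}$-sharpness follows with $\sigma=\sqrt{2/S}$.

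The only real subtlety — and the step I would be most careful about — is the justification of the first-order optimality condition, since the ambient norm $\|\cdot\|$ is arbitrary while the gradient pairing uses the fixed inner product $\langle\cdot,\cdot\rangle$; I would stress that the variational argument along the segment is independent of the choice of norm, so no compatibility between $\|\cdot\|$ and $\langle\cdot,\cdot\rangle$ is needed. Everything else is elementary algebra. I would also remark (though it is not required for the statement) that strong convexity forces the minimizer to be unique, so the minimum over $\argmin_\mathcal{C}f$ reduces to a single term.
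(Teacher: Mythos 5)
Your proof is correct and complete: instantiating the strong convexity inequality at a minimizer $x^*\in\argmin_\mathcal{C}f$ (which exists by compactness and continuity) and discarding the linear term via the first-order optimality condition $\langle x-x^*,\nabla f(x^*)\rangle\geq0$ is exactly the canonical derivation, and your observations about the norm-independence of that optimality condition and the reduction of $\min_{x^*\in\argmin_\mathcal{C}f}\|x-x^*\|$ to a single term are both sound. Note that the paper states Fact~\ref{fact:scsh} without any proof, treating it as standard, so there is no alternative argument to compare against; yours is the expected one.
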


\begin{fact}[\cite{xu18}]
 \label{fact:shgd}
 Let $\mathcal{C}\subset\mathbb{R}^n$ be a compact convex set and $f\colon\mathbb{R}^n\to\mathbb{R}$ be differentiable on $\mathcal{C}$. If $f$ is $\sigma$-sharp on $\mathcal{C}$ with respect to $\|\cdot\|$, then $f$ is $1/(2\sigma^2)$-gradient dominated on $\mathcal{C}$ with respect to $\|\cdot\|$.
\end{fact}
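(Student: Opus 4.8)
The plan is to unfold both definitions and observe that gradient domination with the constant $\mu=1/(2\sigma^2)$ is exactly the pointwise bound $f(x)-\min_\mathcal{C}f\leq\sigma^2\|\nabla f(x)\|_*^2$ for every $x\in\mathcal{C}$, since $\|\nabla f(x)\|_*^2/(2\mu)=\sigma^2\|\nabla f(x)\|_*^2$. So I would fix an arbitrary $x\in\mathcal{C}$ and abbreviate $\delta:=f(x)-\min_\mathcal{C}f\geq0$. Because $\mathcal{C}$ is compact and $\argmin_\mathcal{C}f$ is a closed subset of it, the minimum appearing in the sharpness condition is attained at some $x^*\in\argmin_\mathcal{C}f$, which gives simultaneously $\|x-x^*\|\leq\sigma\sqrt{\delta}$ and $f(x^*)=\min_\mathcal{C}f$.

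The heart of the argument is a short chain that converts the distance-to-optimum estimate supplied by sharpness into gradient control. First I would lower-bound the optimal value by the first-order expansion at $x$, yielding $f(x)-f(x^*)\leq\langle\nabla f(x),x-x^*\rangle$; this is the single place where convexity of $f$ is invoked, being just its first-order characterization, and it is available since the functions we consider are convex. Next I would apply the generalized Cauchy--Schwarz inequality $\langle\nabla f(x),x-x^*\rangle\leq\|\nabla f(x)\|_*\,\|x-x^*\|$, which is immediate from the definition of the dual norm. Combining these two steps with the sharpness bound on $\|x-x^*\|$ produces
\begin{align*}
 \delta=f(x)-f(x^*)\leq\|\nabla f(x)\|_*\,\|x-x^*\|\leq\sigma\,\|\nabla f(x)\|_*\sqrt{\delta}.
\end{align*}

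It then only remains to solve this relation for $\delta$. If $\delta=0$ the claimed inequality holds trivially, since its right-hand side $\sigma^2\|\nabla f(x)\|_*^2$ is nonnegative. If $\delta>0$, dividing through by $\sqrt{\delta}>0$ gives $\sqrt{\delta}\leq\sigma\|\nabla f(x)\|_*$, and squaring yields $\delta\leq\sigma^2\|\nabla f(x)\|_*^2$, which is precisely $1/(2\sigma^2)$-gradient domination. The computation is essentially mechanical, so I do not expect a serious obstacle; the only points requiring care are the use of convexity to license the first-order inequality and the need to treat the degenerate case $\delta=0$ separately before dividing by $\sqrt{\delta}$.
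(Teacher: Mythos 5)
Your argument is correct, and since the paper states this as a Fact with a citation to \cite{xu18} rather than proving it, there is no internal proof to diverge from; your chain (sharpness yields a minimizer $x^*$ with $\|x-x^*\|\leq\sigma\sqrt{\delta}$, the first-order convexity inequality yields $\delta\leq\langle\nabla f(x),x-x^*\rangle$, generalized Cauchy--Schwarz with the dual norm, then divide by $\sqrt{\delta}$ after disposing of $\delta=0$) is precisely the standard proof of the cited result. Note also that attainment of the minimizing $x^*$ is already built into the paper's definition of sharpness, so your compactness remark is a safety check rather than a needed step.

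One point deserves emphasis, and you were right to flag it as the crux: convexity of $f$ is genuinely indispensable, yet it does not appear among the printed hypotheses of the Fact, which assumes only differentiability and $\sigma$-sharpness. Without convexity the implication is false. For instance, on $\mathcal{C}=[-1,1]$ take $f(x)=x^2+g(x)$ where $g\geq0$ is a smooth bump with $g(0)=0$, chosen so that $g'(x_0)=-2x_0$ at some $x_0\in\left]0,1\right[$ with $g(x_0)>0$. Then $\argmin_\mathcal{C}f=\{0\}$ with $\min_\mathcal{C}f=0$, and $f(x)\geq x^2=\operatorname{dist}(x,\argmin_\mathcal{C}f)^2$ shows $f$ is $1$-sharp; but $f'(x_0)=0$ while $f(x_0)>0$, so gradient domination fails at $x_0$ for every $\mu>0$. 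The Fact is thus to be read with convexity inherited from the paper's setting (every objective to which it is applied, e.g.\ the function of Lemma~\ref{lem:ellp2}, is convex), and under that reading your proof is complete; if you wanted your write-up to stand alone, you should state the convexity hypothesis explicitly rather than appeal to ``the functions we consider.''
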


\section{Frank-Wolfe and the approximate Carath\'eodory problem}
\label{sec:fw}

Given a compact convex set $\mathcal{C}\subset\mathbb{R}^n$, a point $x^*\in\mathcal{C}$, and an $\ell_p$-norm where $p\in\left[2,+\infty\right[$, the approximate Carath\'eodory problem can be formulated as the problem of finding a sparse approximate solution to
\begin{align}
 \min_{x\in\mathcal{C}}\frac{1}{2}\|x-x^*\|_p^2.\label{pb:cara}
\end{align}
A natural strategy is to start from an arbitrary vertex $x_0\in\mathcal{C}$ and to sequentially pick up new vertices until the iterates have converged to the desired accuracy. Putting a square on the $\ell_p$-norm provides the objective $f\colon x\in\mathbb{R}^n\mapsto(1/2)\|x-x^*\|_p^2$ with several properties favorable to optimization (Lemma~\ref{lem:ellp2}).

\begin{lemma}
 \label{lem:ellp2}
 Let $\mathcal{C}\subset\mathbb{R}^n$ be a compact convex set, $x^*\in\mathcal{C}$, $p\in\left[2,+\infty\right[$, and $f\colon x\in\mathbb{R}^n\mapsto(1/2)\|x-x^*\|_p^2$. Then $f$ is convex, $(p-1)$-smooth and $1$-gradient dominated on $\mathbb{R}^n$, and $\sqrt{2}$-sharp on $\mathcal{C}$, all respect to the $\ell_p$-norm.
\end{lemma}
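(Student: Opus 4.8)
The plan is to reduce everything to $g\colon y\mapsto\frac{1}{2}\|y\|_p^2$, since $f(x)=g(x-x^*)$ and all four properties are invariant under the translation $x\mapsto x-x^*$, which sends $\mathcal{C}$ to $\mathcal{C}-x^*$ and the minimizer $x^*$ to the origin. Convexity is immediate: $f$ is the composition of the nondecreasing convex map $t\in\left[0,+\infty\right[\mapsto t^2/2$ with the convex map $x\mapsto\|x-x^*\|_p$, hence convex. The remaining three properties all stem from one explicit computation, the gradient
\[
 [\nabla f(x)]_i=\|x-x^*\|_p^{2-p}\,|[x-x^*]_i|^{p-1}\operatorname{sign}([x-x^*]_i),
\]
obtained by differentiating $\frac{1}{2}\bigl(\sum_i|[x-x^*]_i|^p\bigr)^{2/p}$ through the chain rule.

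Let $q=p/(p-1)$ be the conjugate exponent, so that the dual of $\|\cdot\|_p$ is $\|\cdot\|_q$. First I would compute $\|\nabla f(x)\|_q$ from the formula above: using $(p-1)q=p$ gives $\|\nabla f(x)\|_q^q=\|x-x^*\|_p^{(2-p)q}\sum_i|[x-x^*]_i|^p=\|x-x^*\|_p^{(2-p)q+p}$, and since $(2-p)q+p=q$ this collapses to the clean identity $\|\nabla f(x)\|_q=\|x-x^*\|_p$. Because $\min_{\mathbb{R}^n}f=0$ is attained at $x^*$, this yields $f(x)-\min f=\frac{1}{2}\|x-x^*\|_p^2=\frac{1}{2}\|\nabla f(x)\|_q^2$, which is exactly $1$-gradient domination. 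Sharpness is then immediate: since $x^*\in\mathcal{C}$, the minimum of $f$ over $\mathcal{C}$ equals $0$ with unique minimizer $x^*$, so $\|x-x^*\|_p=\sqrt{2f(x)}=\sqrt{2}\sqrt{f(x)-\min_\mathcal{C}f}$, giving $\sqrt{2}$-sharpness.

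The main obstacle is the $(p-1)$-smoothness, the only claim that is not a one-line computation. I would obtain it by duality rather than by bounding the Hessian directly, the latter being delicate because $g$ fails to be twice differentiable at the origin when $p>2$. Specifically, I would invoke the classical fact that $\frac{1}{2}\|\cdot\|_q^2$ is $(q-1)$-strongly convex with respect to $\|\cdot\|_q$ for $q\in\left]1,2\right]$, together with the standard Fenchel duality between strong convexity and smoothness: if $\psi$ is $\mu$-strongly convex with respect to a norm $\|\cdot\|$, then its conjugate $\psi^*$ is $(1/\mu)$-smooth with respect to $\|\cdot\|_*$. Since $\bigl(\frac{1}{2}\|\cdot\|_q^2\bigr)^*=\frac{1}{2}\|\cdot\|_p^2$ and $1/(q-1)=p-1$, this gives that $g$ is $(p-1)$-smooth with respect to $\|\cdot\|_p$, and smoothness transfers verbatim to the shifted function $f$. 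Here $\frac{1}{2}\|\cdot\|_q^2$ is finite and strongly convex on all of $\mathbb{R}^n$, so the conjugate correspondence applies without the twice-differentiability caveat that complicates the direct route.
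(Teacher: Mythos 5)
Your proposal is correct and follows essentially the same route as the paper's proof: the $(p-1)$-smoothness via the $(q-1)$-strong convexity of $\frac{1}{2}\|\cdot\|_q^2$ and the Fenchel duality between strong convexity and smoothness of the conjugate, and the $1$-gradient domination via the same explicit gradient formula and the identity $\|\nabla f(x)\|_q=\|x-x^*\|_p$. Your treatment of convexity and $\sqrt{2}$-sharpness just spells out what the paper dismisses as trivial, so there is no substantive difference.
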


\begin{proof}
 The convexity and the $\sqrt{2}$-sharpness of $f$ are trivial. Let $h\colon x\in\mathbb{R}^n\mapsto(1/2)\|x\|_p^2$. For all $q\in\left]1,2\right]$, $g\colon y\in\mathbb{R}^n\mapsto(1/2)\|y\|_q^2$ is $(q-1)$-strongly convex with respect to the $\ell_q$-norm \cite[Lem.~17]{shalev07phd}. Let $q=p/(p-1)\in\left]1,2\right]$. Then the dual norm of the $\ell_q$-norm is the $\ell_p$-norm and the conjugate of $g$ is $h$ \cite[Rem.~I.4.1]{ekeland99}. By \cite[Cor.~3.5.11 and Rem.~3.5.3]{zalinescu02}, $h$ is $1/(q-1)$-smooth with respect to the $\ell_p$-norm, i.e., $f$ is $(p-1)$-smooth with respect to the $\ell_p$-norm. Lastly, let $x\in\mathbb{R}^n$. We have
 \begin{align}
  \nabla f(x)
  =\|x-x^*\|_p^{2-p}
  \begin{pmatrix}
  \vdots\\
  \operatorname{sign}([x-x^*]_i)|[x-x^*]_i|^{p-1}\\
  \vdots
  \end{pmatrix}.\label{ellp:grad}
 \end{align}
 Thus,
 \begin{align*}
  \|\nabla f(x)\|_q^2
  &=\|x-x^*\|_p^{2(2-p)}\left(\sum_{i=1}^n|[x-x^*]_i|^{q(p-1)}\right)^{2/q}\\
  &=\|x-x^*\|_p^{2(2-p)}\left(\sum_{i=1}^n|[x-x^*]_i|^p\right)^{2(p-1)/p}\\
  &=\|x-x^*\|_p^2.
 \end{align*}
 Therefore, $f$ is $1$-gradient dominated with respect to the $\ell_p$-norm.
\end{proof}

In fact, the Frank-Wolfe algorithm (FW) \cite{fw56}, a.k.a.\ conditional gradient algorithm \cite{levitin66}, follows exactly this strategy. FW is presented in Algorithm~\ref{fw} for general smooth convex objectives $f$. At each iteration, it selects a vertex $v_t\in\mathcal{C}$ by solving a linear minimization problem over $\mathcal{C}$ (Line~\ref{fw:lmo}) and moves in its direction with a step-size $\gamma_t\in\left[0,1\right]$ (Line~\ref{fw:new}). That is, it builds the new iterate $x_{t+1}$ as a convex combination of the current iterate $x_t$ and the new vertex $v_t$, effectively adding $v_t$ to the convex decomposition of $x_t$:
\begin{align*}
 x_{t+1}=(1-\gamma_t)x_t+\gamma_tv_t\in\mathcal{C}.
\end{align*}

\begin{algorithm}[h]
\caption{Frank-Wolfe (FW)}
\label{fw}
\begin{algorithmic}[1]
\REQUIRE Start point $x_0\in\mathcal{C}$, step-size strategy $(\gamma_t)_{t\in\mathbb{N}}\in\left[0,1\right]^\mathbb{N}$.
\FOR{$t=0$ \TO $T-1$}
\STATE$v_t\leftarrow\displaystyle\argmin_{v\in\mathcal{C}}\,\langle v,\nabla f(x_t)\rangle$\label{fw:lmo}
\STATE$x_{t+1}\leftarrow x_t+\gamma_t(v_t-x_t)$\label{fw:new}
\ENDFOR
\end{algorithmic}
\end{algorithm}

The vertex $v_t$ minimizes the linear approximation of $f$ at $x_t$ over $\mathcal{C}$, and consequently the sequence $(f(x_t))_{t\in\mathbb{N}}$ converges to $\min_\mathcal{C}f$ (Section~\ref{sec:cv}). Thus, given a desired level of accuracy $\epsilon>0$, we can apply FW to problem~\eqref{pb:cara} and count the number of iterations until $\|x_t-x^*\|_p\leq\epsilon$, i.e., until $f(x_t)-f(x^*)\leq\epsilon^2/2$. We can then provide bounds on the cardinality of the solution based on the convergence analyses of FW. In Section~\ref{sec:cv}, we study these in different scenarios. In practice, since we know the value of $f(x^*)=0$, we can observe the primal gap directly and use it at a stopping criterion to actually realize the cardinality bounds.

We can further improve the algorithm by ensuring that the contribution of each selected vertex is maximized. The Fully-Corrective Frank-Wolfe algorithm (FCFW) \cite{holloway74} computes the new iterate $x_{t+1}$ by reoptimizing $f$ over the convex hull $\operatorname{conv}\{x_0,v_0,\ldots,v_t\}$ of selected vertices. Compared to FW, this avoids selecting redundant vertices in the future. In practice, as we will see in Section~\ref{sec:exp}, FCFW generates iterates with much higher sparsity than FW, although each iteration is more expensive to compute. It is presented in Algorithm~\ref{fcfw}, where $\mathcal{S}_t$ denotes the set of vertices in the convex decomposition of $x_t$.

\begin{algorithm}[h]
\caption{Fully-Corrective Frank-Wolfe (FCFW)}
\label{fcfw}
\begin{algorithmic}[1]
\REQUIRE Start point $x_0\in\mathcal{C}$.
\STATE$\mathcal{S}_0\leftarrow\{x_0\}$
\FOR{$t=0$ \TO $T-1$}
\STATE$v_t\leftarrow\argmin\limits_{v\in\mathcal{C}}\,\langle v,\nabla f(x_t)\rangle$
\STATE$\mathcal{S}_{t+1}\leftarrow\mathcal{S}_t\cup\{v_t\}$
\STATE$x_{t+1}\leftarrow\argmin\limits_{x\in\operatorname{conv}\mathcal{S}_{t+1}}f(x)$\label{fcfw:opt}
\ENDFOR
\end{algorithmic}
\end{algorithm}

\section{Convergence rates of the Frank-Wolfe algorithm}
\label{sec:cv}

In this section, we present convergence rates of the Frank-Wolfe algorithm. Throughout, we consider an arbitrary norm $\|\cdot\|$ on $\mathbb{R}^n$.

\begin{assumption}
 \label{aspt}
 Let $\mathcal{C}\subset\mathbb{R}^n$ be a compact convex set with diameter $D$ and $f\colon\mathbb{R}^n\to\mathbb{R}$ be an $L$-smooth convex function on $\mathcal{C}$, where $D$ and $L$ are defined with respect to $\|\cdot\|$.
\end{assumption}

Under Assumption~\ref{aspt}, the Frank-Wolfe algorithm (FW, Algorithm~\ref{fw}) is a first-order method addressing the constrained convex optimization problem
\begin{align}
 \min_{x\in\mathcal{C}}f(x).\label{pb}
\end{align}

\subsection{The general convergence rate}

There are two step-size strategies for which the convergence of FW has been well studied. The strategy first considered historically \cite{fw56,levitin66,demyanov70} is
\begin{align}
 \gamma_t\leftarrow\min\left\{\frac{\langle x_t-v_t,\nabla f(x_t)\rangle}{L\|x_t-v_t\|^2},1\right\}.\label{closed}
\end{align}
It is obtained by minimizing the quadratic upper bound from smoothness:
\begin{align}
 \gamma_t=\argmin_{\gamma\in\left[0,1\right]}f(x_t)+\gamma\langle v_t-x_t,\nabla f(x_t)\rangle+\frac{L}{2}\gamma^2\|v_t-x_t\|^2,\label{closed:opt}
\end{align}
and guarantees progress at each iteration, i.e., we have always $f(x_{t+1})\leq f(x_t)$. However, it requires some knowledge of the smoothness constant $L$ of $f$. To avoid such a requirement, \emph{open loop} strategies have been proposed \cite{dunn78}, basically in the form $\gamma_t\sim1/t$. We will refer to 
\begin{align}
 \gamma_t\leftarrow\frac{2}{t+2}\label{open}
\end{align}
as the \emph{open-loop} strategy, as used in \cite{jaggi13fw}; the strategy~\eqref{closed} is thus referred to as the \emph{closed-loop} strategy. The open-loop strategy does not ensure progress at each iteration but it is very simple to implement and its oblivious decaying allows analyses of FW in different settings, e.g., with stochastic gradients. Lemma~\ref{lem:init} bounds the primal gap at $x_1$.

\begin{lemma}
 \label{lem:init}
 Let Assumption~\ref{aspt} hold and consider FW (Algorithm~\ref{fw}) with the open-loop strategy~\eqref{open} or the closed-loop strategy~\eqref{closed}. Then
 \begin{align*}
  f(x_1)-\min_\mathcal{C}f
  \leq\frac{LD^2}{2}.
 \end{align*}
\end{lemma}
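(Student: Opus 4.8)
The plan is to reduce, at the very first iteration, both step-size strategies to a single guaranteed one-step decrease and then to combine $L$-smoothness with the optimality of the linear minimizer $v_0$.

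First I would observe that the open-loop strategy~\eqref{open} gives $\gamma_0 = 2/(0+2) = 1$, so that $x_1 = x_0 + 1\cdot(v_0 - x_0) = v_0$; applying $L$-smoothness at $x_0$ along the direction $v_0 - x_0$ then yields
\begin{align*}
 f(x_1) \leq f(x_0) + \langle v_0 - x_0, \nabla f(x_0)\rangle + \frac{L}{2}\|v_0 - x_0\|^2.
\end{align*}
For the closed-loop strategy~\eqref{closed}, the step-size $\gamma_0$ is defined in~\eqref{closed:opt} as the minimizer over $[0,1]$ of the quadratic upper bound $\gamma \mapsto f(x_0) + \gamma\langle v_0 - x_0, \nabla f(x_0)\rangle + (L/2)\gamma^2\|v_0 - x_0\|^2$, which by smoothness upper-bounds $f(x_0 + \gamma(v_0 - x_0))$. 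Evaluating this bound at the feasible competitor $\gamma = 1$ gives exactly the same right-hand side as above, so in both cases $f(x_1)$ is at most that expression.

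Next I would eliminate the first-order term. Let $x^* \in \argmin_\mathcal{C} f$. Convexity of $f$ gives $\langle x^* - x_0, \nabla f(x_0)\rangle \leq f(x^*) - f(x_0) = \min_\mathcal{C} f - f(x_0)$, and the optimality of $v_0$ for the linear minimization in Line~\ref{fw:lmo} gives $\langle v_0 - x_0, \nabla f(x_0)\rangle \leq \langle x^* - x_0, \nabla f(x_0)\rangle$ since $x^* \in \mathcal{C}$. Combining these, $f(x_0) + \langle v_0 - x_0, \nabla f(x_0)\rangle \leq \min_\mathcal{C} f$. Substituting into the bound from the previous paragraph and using $\|v_0 - x_0\| \leq D$ (both iterates lie in $\mathcal{C}$, of diameter $D$) yields $f(x_1) - \min_\mathcal{C} f \leq (L/2)\|v_0 - x_0\|^2 \leq LD^2/2$, as claimed.

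There is no serious obstacle here; the only point requiring care is to treat both step-size rules uniformly, which I would do by noting that the full step $\gamma = 1$ is the relevant comparison in each case — it is literally the open-loop choice at $t = 0$, and it is a feasible competitor in the closed-loop minimization~\eqref{closed:opt}. Everything else is the standard single-step Frank-Wolfe descent argument (smoothness, convexity, and linear-minimization optimality) specialized to the first iteration.
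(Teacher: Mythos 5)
Your proof is correct and takes essentially the same approach as the paper's: apply $L$-smoothness at the first step, handle the closed-loop case by noting that the optimal $\gamma_0$ of~\eqref{closed:opt} does at least as well as the feasible competitor $\gamma=1$ (which is exactly the open-loop choice at $t=0$), then use the optimality of $v_0$ in Line~\ref{fw:lmo}, convexity at $x_0$, and the diameter bound $\|v_0-x_0\|\leq D$. The only cosmetic difference is that you fold the linear-minimization optimality and convexity into one combined estimate before substituting, while the paper applies them sequentially in its displayed chain.
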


Under Assumption~\ref{aspt}, FW converges at a rate $\mathcal{O}(LD^2/t)$ \cite{levitin66,jaggi13fw}. Note that the proof technique for the closed-loop strategy was already seen in \cite[Sec.~6]{fw56}.

\begin{theorem}[\cite{levitin66,jaggi13fw}]
 \label{th:fw}
 Let Assumption~\ref{aspt} hold and consider FW (Algorithm~\ref{fw}) with:
 \begin{enumerate}[label=(\roman*)]
  \item the closed-loop strategy~\eqref{closed}. Then for all $t\geq1$,
 \begin{align*}
  f(x_t)-\min_\mathcal{C}f
  \leq\frac{4LD^2}{t+2}.
 \end{align*}
  \item the open-loop strategy~\eqref{open}. Then for all $t\geq1$,
 \begin{align*}
  f(x_t)-\min_\mathcal{C}f
  \leq\frac{2LD^2}{t+2}.
 \end{align*}
 \end{enumerate}
\end{theorem}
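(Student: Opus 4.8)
The plan is to run the standard descent argument for both step-size rules, tracking the primal gap $h_t := f(x_t)-\min_\mathcal{C}f$ and the Frank-Wolfe gap $g_t := \langle x_t-v_t,\nabla f(x_t)\rangle$. Two ingredients drive everything. First, since $v_t$ minimizes the linear form over $\mathcal{C}$ (Line~\ref{fw:lmo}), for any minimizer $x^\star\in\argmin_\mathcal{C}f$ we have $g_t = \max_{v\in\mathcal{C}}\langle x_t-v,\nabla f(x_t)\rangle \geq \langle x_t-x^\star,\nabla f(x_t)\rangle \geq f(x_t)-f(x^\star) = h_t \geq 0$, where the middle step is convexity of $f$ and $x^\star\in\mathcal{C}$. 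Second, applying $L$-smoothness to $x_{t+1}=x_t+\gamma(v_t-x_t)$ together with $\|v_t-x_t\|\leq D$ gives the one-step bound
\[
 h_{t+1} \leq h_t - \gamma g_t + \frac{LD^2}{2}\gamma^2,
\]
valid for any $\gamma\in\left[0,1\right]$ we decide to analyze.

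For the open-loop rule I would substitute $\gamma=\gamma_t=2/(t+2)$ directly and use $g_t\geq h_t$ to obtain the contraction
\[
 h_{t+1} \leq (1-\gamma_t)h_t + \frac{LD^2}{2}\gamma_t^2,
\]
then prove $h_t\leq 2LD^2/(t+2)$ by induction on $t\geq1$. The base case $t=1$ follows from Lemma~\ref{lem:init} since $LD^2/2\leq 2LD^2/3$, and the inductive step, after inserting $\gamma_t$ and the hypothesis, reduces to the elementary inequality $(t+1)(t+3)\leq(t+2)^2$.

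For the closed-loop rule I would exploit that $\gamma_t$ in \eqref{closed} is the exact minimizer over $\left[0,1\right]$ of the quadratic model, so $f(x_{t+1})$ is at most the model value for \emph{any} competing $\gamma\in\left[0,1\right]$. Minimizing the right-hand side $-\gamma g_t+\tfrac{LD^2}{2}\gamma^2$ over $\gamma\in\left[0,1\right]$ yields the per-step decrease $h_{t+1}\leq h_t-\tfrac12\min\{g_t,\,g_t^2/(LD^2)\}$. Using $g_t\geq h_t$ together with the base bound $h_1\leq LD^2/2$ from Lemma~\ref{lem:init}, one checks that $h_t\leq LD^2$ persists for all $t\geq1$, so in the relevant regime the recursion simplifies to $h_{t+1}\leq h_t-h_t^2/(2LD^2)$. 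Taking reciprocals gives $1/h_{t+1}\geq 1/h_t+1/(2LD^2)$, which telescopes from $t=1$ (using $1/h_1\geq 2/(LD^2)$) to the claimed $\mathcal{O}(LD^2/t)$ bound.

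The main obstacle is the closed-loop case: unlike the open loop, the line-search step size is defined implicitly, so I cannot simply plug in a formula and must instead argue through the optimality of the model minimizer while handling the clipping at $\gamma=1$ (equivalently the two regimes $g_t\leq LD^2$ and $g_t>LD^2$). The delicate point is verifying that these two regimes combine into the single clean recursion $h_{t+1}\leq h_t-\tfrac12\min\{h_t,\,h_t^2/(LD^2)\}$, and that the base case keeps all iterates in the regime $h_t\leq LD^2$ where the clipping never binds and the reciprocal induction produces the stated constant.
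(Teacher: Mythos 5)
Your proof is correct, but note that the paper itself contains no proof of Theorem~\ref{th:fw}: it quotes the rates from \cite{levitin66,jaggi13fw} and only proves the base-case bound (Lemma~\ref{lem:init}) in its appendix, so the comparison is against the standard cited arguments. Your open-loop induction is exactly the classical one, and every step checks out: the recursion $h_{t+1}\leq(1-\gamma_t)h_t+\tfrac{LD^2}{2}\gamma_t^2$, the base case $h_1\leq LD^2/2\leq 2LD^2/3$, and the reduction to $(t+1)(t+3)\leq(t+2)^2$. For the closed loop you take the Levitin--Polyak-style route: the per-step decrease $h_{t+1}\leq h_t-\tfrac{1}{2}\min\{g_t,\,g_t^2/(LD^2)\}$ is valid (the two regimes $g_t\leq LD^2$ and $g_t>LD^2$ do combine as you claim, and replacing $g_t$ by $h_t$ is licensed because $u\mapsto\min\{u,u^2/(LD^2)\}$ is nondecreasing on $u\geq0$), the invariant $h_t\leq LD^2$ indeed persists, and the reciprocal telescoping from $1/h_1\geq 2/(LD^2)$ gives $h_t\leq 2LD^2/(t+3)$, which is actually \emph{stronger} than the stated $4LD^2/(t+2)$ — the factor $4$ in the theorem is simply the constant quoted from the literature. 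One remark worth making: there is a shorter route for the closed-loop case that avoids the two-regime analysis and the reciprocal induction entirely. Since $\gamma_t$ in \eqref{closed} minimizes the quadratic model over $\left[0,1\right]$, the model value at $\gamma_t$ is at most its value at the competing choice $\gamma=2/(t+2)$, so the open-loop recursion and induction apply verbatim and yield $2LD^2/(t+2)$ for the closed loop as well; this is Jaggi's argument. Your version costs a little more bookkeeping (the clipping regimes, the invariant $h_t\leq LD^2$, the edge cases $h_t=0$ and $v_t=x_t$, which you should dispose of in one line) but buys a self-contained per-step decrease in terms of the Frank--Wolfe gap $g_t$, which is the quantity one monitors in practice.
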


The convergence rate $\mathcal{O}(LD^2/t)$ is optimal \cite{canon68,jaggi13fw,lan13}, however it is possible to derive faster rates under additional assumptions.

\subsection{Faster convergence rates under additional assumptions}
\label{sec:faster}

Faster convergence rates can be established under additional assumptions on the geometry of $\mathcal{C}$, the properties of $f$, or the location of the set of unconstrained solutions $\argmin_{\mathbb{R}^n}f$ with respect to $\mathcal{C}$. Note that they do not require modifying the algorithmic design of FW, and the step-size strategy is the same closed-loop strategy~\eqref{closed}. This shows that FW is adaptive and naturally leverages the structure of the problem. A summary is presented in Table~\ref{tab:fw}.

\begin{table}[h]
 \caption{Additional assumptions and corresponding convergence rates of FW on problem~\eqref{pb}, where $\mathcal{C}$ is a compact convex set and $f$ is a smooth convex function (Assumption~\ref{aspt}). We denote by $\mathcal{X}=\argmin_{\mathbb{R}^n}f$ the set of unconstrained solutions, possibly empty. The strong convexity assumption can be generalized to that of uniform convexity and also leads to faster rates (Theorems~\ref{th:uni1}--\ref{th:uni2}).}
\label{tab:fw}
\centering{
 \begin{tabular}{ccccl}
  \toprule
  \multicolumn{4}{c}{\bf Additional assumptions}&{\bf Rate}\\
  \cmidrule(lr){1-4}
  $\mathcal{C}$ strongly convex&$f$ gradient dominated&$\mathcal{X}\cap\operatorname{int}\mathcal{C}\neq\varnothing$&$\mathcal{X}\cap\mathcal{C}=\varnothing$\\
  \midrule
  \textcolor{gray!60}{\ding{55}}&\textcolor{gray!60}{\ding{55}}&\textcolor{gray!60}{\ding{55}}&\textcolor{gray!60}{\ding{55}}&$\mathcal{O}(1/t)$\\
  \textcolor{gray!60}{\ding{55}}&\ding{51}&\ding{51}&\textcolor{gray!60}{\ding{55}}&$\mathcal{O}(\exp(-\omega t))$\\
  \ding{51}&\textcolor{gray!60}{\ding{55}}&\textcolor{gray!60}{\ding{55}}&\ding{51}&$\mathcal{O}(\exp(-\omega t))$\\
  \ding{51}&\ding{51}&\textcolor{gray!60}{\ding{55}}&\textcolor{gray!60}{\ding{55}}&$\mathcal{O}(1/t^2)$\\
  \bottomrule
 \end{tabular}
 }
\end{table}

If there exists an unconstrained solution $x^*\in\argmin_{\mathbb{R}^n}f$ in the interior of $\mathcal{C}$ and if $f$ is gradient dominated, then FW converges at a linear rate, as shown in \cite[Sec.~4.2]{garber15} following an argument similar to \cite{guelat86}.  In Theorem~\ref{th:int}, $r$ is the radius of an ball centered at $x^*$ and included in $\mathcal{C}$.

\begin{theorem}[\cite{garber15,guelat86}]
 \label{th:int}
 In addition to Assumption~\ref{aspt}, suppose that $\argmin_{\mathbb{R}^n}f\cap\operatorname{int}\mathcal{C}\neq\varnothing$ and $f$ is $\mu$-gradient dominated on $\mathcal{C}$ with respect to $\|\cdot\|$. Consider FW (Algorithm~\ref{fw}) with the closed-loop strategy~\eqref{closed}. Then for all $t\geq1$,
 \begin{align*}
  f(x_t)-\min_\mathcal{C}f
  \leq\frac{LD^2}{2}\left(1-\frac{\mu}{L}\left(\frac{r}{D}\right)^2\right)^{t-1},
 \end{align*}
 where $r\in\left]0,D/2\right]$.
\end{theorem}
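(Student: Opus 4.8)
The plan is to establish a one-step geometric contraction of the form $h_{t+1}\leq(1-\frac{\mu}{L}(r/D)^2)h_t$ for the primal gap $h_t:=f(x_t)-\min_{\mathcal{C}}f$, and then to iterate it starting from the bound on $h_1$ supplied by Lemma~\ref{lem:init}. The first observation to set up the argument is that the interior point $x^*$ minimizes $f$ over all of $\mathbb{R}^n$ and lies in $\mathcal{C}$, so it also minimizes $f$ over $\mathcal{C}$; hence $\min_{\mathcal{C}}f=f(x^*)$ and $h_t=f(x_t)-f(x^*)$.

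The crux is a lower bound on the Frank-Wolfe gap $g_t:=\langle x_t-v_t,\nabla f(x_t)\rangle=\max_{v\in\mathcal{C}}\langle x_t-v,\nabla f(x_t)\rangle$ that is driven by the radius $r$. Since the affine ball of radius $r$ centered at $x^*$ is contained in $\mathcal{C}$, every direction of length $r$ in the affine hull of $\mathcal{C}$ yields a feasible competitor, so maximizing over that ball alone gives $g_t\geq\langle x_t-x^*,\nabla f(x_t)\rangle+r\|\nabla f(x_t)\|_*$. Convexity of $f$ makes the inner product at least $f(x_t)-f(x^*)=h_t\geq0$, so it can be dropped to obtain $g_t\geq r\|\nabla f(x_t)\|_*$. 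Combining this with $\mu$-gradient dominance, which rearranges to $\|\nabla f(x_t)\|_*^2\geq2\mu h_t$, produces the key inequality $g_t^2\geq2\mu r^2h_t$.

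Next I would invoke the standard closed-loop progress estimate: minimizing the smoothness upper bound over $\gamma\in\left[0,1\right]$ as in~\eqref{closed:opt} and using $\|v_t-x_t\|\leq D$ yields $h_t-h_{t+1}\geq\min\{g_t/2,\,g_t^2/(2LD^2)\}$. In the unclipped regime the bound $g_t^2\geq2\mu r^2h_t$ gives immediately $h_{t+1}\leq h_t-g_t^2/(2LD^2)\leq(1-\frac{\mu}{L}(r/D)^2)h_t$, which is exactly the target. In the clipped regime ($\gamma_t=1$, progress $\geq g_t/2$) I would instead use $g_t\geq h_t$ to get $h_{t+1}\leq h_t/2$, and then verify $\frac{\mu}{L}(r/D)^2\leq1/2$; this follows from $r\leq D/2$ together with $\mu\leq L$, the latter obtained by combining smoothness (which gives $\|\nabla f(x)\|_*^2\leq2L\,h$) with gradient dominance. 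Hence $h_{t+1}\leq h_t/2\leq(1-\frac{\mu}{L}(r/D)^2)h_t$ in this case as well, so the contraction holds for every $t\geq1$, and induction from $h_1\leq LD^2/2$ finishes the proof.

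I expect the main obstacle to be the gap lower bound in the relative-interior setting. One must argue carefully that the ball of radius $r$ lies in $\mathcal{C}$ only inside the affine hull, so that maximizing over the available feasible directions recovers $r\|\nabla f(x_t)\|_*$ with the dual norm taken appropriately over that subspace, and confirm that the discarded convexity term $\langle x_t-x^*,\nabla f(x_t)\rangle$ is genuinely nonnegative. By comparison, the clipping case is a routine verification once $\mu\leq L$ is in hand, and the concluding induction is immediate.
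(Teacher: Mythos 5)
Your proposal is correct and takes essentially the same route as the proof the paper relies on (the argument of \cite[Sec.~4.2]{garber15} following \cite{guelat86}): lower-bound the Frank--Wolfe gap by $g_t\geq r\|\nabla f(x_t)\|_*$ using the radius-$r$ ball around the interior unconstrained minimizer, combine with gradient dominance to get $g_t^2\geq2\mu r^2h_t$, feed this into the closed-loop per-step progress $\min\{g_t/2,\,g_t^2/(2LD^2)\}$ with the clipped case controlled by $g_t\geq h_t$ and $\mu\leq L$, $r\leq D/2$, and induct from Lemma~\ref{lem:init}. The two technical points you flag---the dual norm restricted to the affine hull of $\mathcal{C}$ in the relative-interior case, and deriving $\mu\leq L$ when smoothness holds only on $\mathcal{C}$---are exactly the points glossed in the cited sources as well, and they are benign for the paper's application since Lemma~\ref{lem:ellp2} yields the dominance inequality along $x_t-x^*$, which lies in the affine hull.
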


On the other hand, if all unconstrained solutions are outside of $\mathcal{C}$ and if $\mathcal{C}$ is strongly convex, then FW converges again at a linear rate \cite{levitin66}. The distance of $\argmin_{\mathbb{R}^n}f$ to $\mathcal{C}$ is measured via the quantity $c=\min_\mathcal{C}\|\nabla f\|_*>0$.

\begin{theorem}[\cite{levitin66}]
 \label{th:out}
 In addition to Assumption~\ref{aspt}, suppose that $\mathcal{C}$ is $\alpha$-strongly convex with respect to $\|\cdot\|$ and $\argmin_{\mathbb{R}^n}f\cap\mathcal{C}=\varnothing$. Consider FW (Algorithm~\ref{fw}) with the closed-loop strategy~\eqref{closed}. Then for all $t\geq1$,
 \begin{align*}
  f(x_t)-\min_\mathcal{C}f
  \leq\frac{LD^2}{2}\left(1-\min\left\{\frac{1}{2},\frac{\alpha c}{4L}\right\}\right)^{t-1},
 \end{align*}
 where $c=\min_\mathcal{C}\|\nabla f\|_*>0$.
\end{theorem}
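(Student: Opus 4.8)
The plan is to combine the standard smoothness-based descent inequality for FW with a \emph{scaling inequality} that the strong convexity of $\mathcal{C}$ forces on the Frank-Wolfe vertex, and then to run a contraction argument on the primal gap $h_t := f(x_t)-\min_\mathcal{C}f$. Write $g_t := \langle x_t-v_t,\nabla f(x_t)\rangle\geq0$ for the Frank-Wolfe gap; by convexity of $f$ we always have $g_t\geq h_t$.

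The first and central step is to extract extra progress from the geometry of $\mathcal{C}$. Applying the $(\alpha,2)$-uniform (i.e.\ $\alpha$-strong) convexity definition with $x=x_t$, $y=v_t$, and $\gamma=1/2$ shows that the perturbed midpoint $\frac{1}{2}(x_t+v_t)+\frac{\alpha}{4}\|x_t-v_t\|^2 z$ lies in $\mathcal{C}$ for every $z$ with $\|z\|=1$. Feeding this point into the optimality of $v_t$ for the linear minimization $\min_{v\in\mathcal{C}}\langle v,\nabla f(x_t)\rangle$, and then choosing $z$ so as to minimize $\langle z,\nabla f(x_t)\rangle=-\|\nabla f(x_t)\|_*$, yields after rearranging the signs $g_t\geq\frac{\alpha}{2}\|x_t-v_t\|^2\|\nabla f(x_t)\|_*\geq\frac{\alpha c}{2}\|x_t-v_t\|^2$, where the last step uses $c=\min_\mathcal{C}\|\nabla f\|_*$. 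Here the hypothesis $\mathcal{C}\cap\argmin_{\mathbb{R}^n}f=\varnothing$ is exactly what guarantees $c>0$: since $f$ is convex, $\nabla f$ vanishes only at unconstrained minimizers, so $\|\nabla f\|_*$ is continuous and strictly positive on the compact set $\mathcal{C}$ and hence attains a positive minimum.

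Next I would combine this with the closed-loop step-size. The quadratic upper bound $f(x_{t+1})\leq f(x_t)-\gamma_t g_t+\frac{L\gamma_t^2}{2}\|v_t-x_t\|^2$, minimized over $\gamma\in\left[0,1\right]$, splits into two cases according to whether the unconstrained minimizer $g_t/(L\|v_t-x_t\|^2)$ is clipped at $1$. When it is not, the progress is $g_t^2/(2L\|v_t-x_t\|^2)$; substituting one factor of $g_t$ via the scaling inequality gives progress at least $\frac{\alpha c}{4L}g_t\geq\frac{\alpha c}{4L}h_t$. When the step is clipped to $\gamma_t=1$, one has $g_t>L\|v_t-x_t\|^2$ and the bound gives progress at least $\frac{1}{2}g_t\geq\frac{1}{2}h_t$. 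In either case $h_{t+1}\leq\bigl(1-\min\{1/2,\,\alpha c/(4L)\}\bigr)h_t$.

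Finally I would iterate this contraction from $t=1$ and invoke Lemma~\ref{lem:init} for the base case $h_1\leq LD^2/2$, which produces the claimed geometric bound. The only genuinely nonroutine step is the scaling inequality: the correct choice of the auxiliary vector $z$ (taking $z$ to minimize $\langle z,\nabla f(x_t)\rangle$) and the careful bookkeeping of signs is where the argument must be handled precisely, whereas the two-case step-size analysis and the induction are standard.
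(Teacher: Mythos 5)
Your proposal is correct and follows essentially the same route as the paper's source for this result (the paper itself gives no proof of Theorem~\ref{th:out}, citing the classical Levitin--Polyak argument): the scaling inequality $g_t\geq\frac{\alpha c}{2}\|x_t-v_t\|^2$ extracted from the perturbed midpoint and the optimality of $v_t$, the two-case analysis of the closed-loop step-size, and induction from the base case $h_1\leq LD^2/2$ of Lemma~\ref{lem:init} are exactly the standard ingredients. Your sign bookkeeping with $\min_{\|z\|=1}\langle z,\nabla f(x_t)\rangle=-\|\nabla f(x_t)\|_*$ and your justification that $c>0$ are both sound, and the only unmentioned degenerate case, $v_t=x_t$, is harmless since then $g_t=0$ forces $h_t=0$.
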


Theorems~\ref{th:int}--\ref{th:out} rely on the location of the set of unconstrained solutions $\argmin_{\mathbb{R}^n}f$ with respect to $\mathcal{C}$, and the convergence rates become increasingly slower as this set comes closer to the boundary of $\mathcal{C}$, which can be seen with $r\to0$ and $c\to0$ respectively. However, if $\mathcal{C}$ is strongly convex and $f$ is gradient dominated, then FW enjoys a faster rate independently of the location of $\argmin_{\mathbb{R}^n}f$ \cite{garber15}.

\begin{theorem}[\cite{garber15}]
 \label{th:t2}
 In addition to Assumption~\ref{aspt}, suppose that $\mathcal{C}$ is $\alpha$-strongly convex and $f$ is $\mu$-gradient dominated on $\mathcal{C}$, both with respect to $\|\cdot\|$. Consider FW (Algorithm~\ref{fw}) with the closed-loop strategy~\eqref{closed}. Then for all $t\geq1$,
 \begin{align*}
  f(x_t)-\min_\mathcal{C}f
  \leq\frac{\max\{(9/2)LD^2,144(L/\alpha)^2/\mu\}}{(t+2)^2}.
 \end{align*}
\end{theorem}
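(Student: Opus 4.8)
The plan is to run the usual Frank--Wolfe descent bookkeeping, but to exploit the $\alpha$-strong convexity of $\mathcal{C}$ to convert the generic $\mathcal{O}(1/t)$ decrease into a faster recursion, and then to close it with the gradient-domination inequality. Write $h_t = f(x_t)-\min_\mathcal{C}f$ and let $g_t = \langle x_t - v_t, \nabla f(x_t)\rangle$ be the Frank--Wolfe gap, which satisfies $g_t \geq h_t$ by convexity together with the optimality of $v_t$. First I would record the per-iteration progress of the closed-loop rule~\eqref{closed}: since $\gamma_t$ minimizes the smoothness upper bound over $[0,1]$, a two-case analysis (interior minimizer versus the clipped choice $\gamma_t=1$) yields
\begin{align*}
 h_t - h_{t+1} \geq \min\left\{\frac{g_t}{2},\ \frac{g_t^2}{2L\|x_t - v_t\|^2}\right\}.
\end{align*}

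The conceptual crux is upgrading the denominator $\|x_t - v_t\|^2$ in the short-step branch. Taking the unit vector $z$ with $\langle z, \nabla f(x_t)\rangle = -\|\nabla f(x_t)\|_*$ and inserting the curved point $(1-\gamma)x_t + \gamma v_t + (1-\gamma)\gamma\alpha\|x_t - v_t\|^2 z \in \mathcal{C}$ into the optimality of $v_t$, then dividing by $1-\gamma$ and letting $\gamma \to 1$, gives the scaling inequality $g_t \geq \alpha\|x_t - v_t\|^2\|\nabla f(x_t)\|_*$. Substituting this into the short-step branch and invoking $g_t \geq h_t$ together with $\mu$-gradient domination in the form $\|\nabla f(x_t)\|_* \geq \sqrt{2\mu h_t}$, I obtain a recursion purely in $h_t$:
\begin{align*}
 h_t - h_{t+1} \geq \min\left\{\frac{h_t}{2},\ \frac{\alpha\sqrt{2\mu}}{2L}\,h_t^{3/2}\right\}.
\end{align*}

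Finally I would prove $h_t \leq M/(t+2)^2$ by induction on $t$. The base case $t=1$ is Lemma~\ref{lem:init}, which gives $h_1 \leq LD^2/2 \leq M/9$ as soon as $M \geq (9/2)LD^2$, the first term of the maximum. For the inductive step I split according to which branch of the minimum is active. On the halving branch the bound $h_{t+1}\leq h_t/2 \leq M/(2(t+2)^2)$ is absorbed into $M/(t+3)^2$ because $(t+3)^2 \leq 2(t+2)^2$ for $t\geq1$. On the $h_t^{3/2}$ branch I use that $\phi(h)=h - \frac{\alpha\sqrt{2\mu}}{2L}h^{3/2}$ is increasing on the interval where this branch is active, so that $h_{t+1}\leq\phi(h_t)$ can be compared termwise with $M/(t+3)^2$; pushing the telescoped inequality $\frac{1}{(t+2)^2}-\frac{1}{(t+3)^2}\leq\frac{2t+5}{(t+2)^2(t+3)^2}$ through forces $M$ to dominate a multiple of $(L/\alpha)^2/\mu$, which is the second term of the maximum.

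The main obstacle is this last induction. The decrease $\phi$ is only monotone for $h$ below a threshold of order $(L/\alpha)^2/\mu$, so the argument must separately handle those iterations where the inductive bound $M/(t+2)^2$ overshoots that threshold; there one evaluates $\phi$ at the threshold itself and checks that the resulting flat bound still fits under $M/(t+3)^2$, again using $(t+3)^2 < 2(t+2)^2$. Tracking the worst case across both branches and both sub-regimes is precisely what pins down the admissible size of $M$, and a deliberately lossy treatment of these comparisons is what yields the stated constant $144(L/\alpha)^2/\mu$ rather than the smallest one achievable.
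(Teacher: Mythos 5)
The paper does not prove Theorem~\ref{th:t2} itself but imports it verbatim from \cite{garber15}, and your proposal correctly reconstructs essentially that argument: the two-branch progress bound $h_t-h_{t+1}\geq\min\{g_t/2,\,g_t^2/(2L\|x_t-v_t\|^2)\}$ for the closed-loop rule, the strong-convexity scaling inequality $g_t\geq\alpha\|x_t-v_t\|^2\|\nabla f(x_t)\|_*$, gradient domination to obtain the recursion $h_t-h_{t+1}\geq\min\{h_t/2,\,\tfrac{\alpha\sqrt{2\mu}}{2L}h_t^{3/2}\}$, and the induction against $M/(t+2)^2$ with the threshold split for the nonmonotone regime of $\phi(h)=h-ch^{3/2}$ --- all of which I verified goes through, with the base case pinning $M\geq(9/2)LD^2$ via Lemma~\ref{lem:init} and the short-step branch requiring only $M\geq 8(L/\alpha)^2/\mu$, comfortably under the stated $144(L/\alpha)^2/\mu$. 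The single deviation is cosmetic and in your favor: taking $\gamma\to1$ in the uniform-convexity point rather than the midpoint $\gamma=1/2$ used in \cite{garber15} strengthens the scaling inequality by a factor of $2$, which merely adds slack in the constants.
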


The notion of strong convexity for a set can be generalized to that of uniform convexity. Theorems~\ref{th:uni1}--\ref{th:uni2} are slightly adapted from \cite{kerdreux21} using Lemma~\ref{lem:init}.

\begin{theorem}[\cite{kerdreux21}]
 \label{th:uni1}
 In addition to Assumption~\ref{aspt}, suppose that $\mathcal{C}$ is $(\alpha,q)$-uniformly convex with respect to $\|\cdot\|$, where $q>2$, and $\argmin_{\mathbb{R}^n}f\cap\mathcal{C}=\varnothing$. Consider FW (Algorithm~\ref{fw}) with the closed-loop strategy~\eqref{closed}. Then for all $t\geq1$,
 \begin{align*}
  f(x_t)-\min_\mathcal{C}f
  \leq\frac{\max\{(LD^2/2)(1+\beta_1)^{q/(q-2)},4(L/\beta_2)^{q/(q-2)}(4/(\alpha c))^{2/(q-2)}\}}{(t+\beta_1)^{q/(q-2)}},
 \end{align*}
 where $\beta_1=(2-2^{(q-2)/q})/(2^{(q-2)/q}-1)$, $\beta_2=(q-2)/q-(2/q)(2^{(q-2)/q}-1)$, and $c=\min_\mathcal{C}\|\nabla f\|_*>0$.
\end{theorem}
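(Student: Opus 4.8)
The proof will combine a geometric ``scaling'' inequality coming from uniform convexity with the standard smoothness-based descent of FW, and then solve the resulting scalar recursion by induction seeded with Lemma~\ref{lem:init}. First I would show that the Frank-Wolfe gap $g_t:=\langle x_t-v_t,\nabla f(x_t)\rangle$ controls the step length through
\begin{align*}
 g_t\geq\alpha c\|x_t-v_t\|^q.
\end{align*}
To see this, fix $\gamma\in\left[0,1\right[$ and pick a unit vector $z$ with $\langle z,\nabla f(x_t)\rangle=-\|\nabla f(x_t)\|_*$. By $(\alpha,q)$-uniform convexity, $w_\gamma:=(1-\gamma)x_t+\gamma v_t+(1-\gamma)\gamma\alpha\|x_t-v_t\|^qz\in\mathcal{C}$, so the optimality of $v_t$ for the linear minimization gives $\langle v_t,\nabla f(x_t)\rangle\leq\langle w_\gamma,\nabla f(x_t)\rangle$. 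Expanding, cancelling the common terms, dividing by $(1-\gamma)>0$ and letting $\gamma\to1$ yields $g_t\geq\alpha\|x_t-v_t\|^q\|\nabla f(x_t)\|_*$; the hypothesis $\mathcal{C}\cap\argmin_{\mathbb{R}^n}f=\varnothing$ then forces $\|\nabla f(x_t)\|_*\geq c>0$, which is exactly where $c$ enters.

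Next I would feed this into the descent guarantee of the closed-loop step size. $L$-smoothness with the step~\eqref{closed} gives the familiar bound $f(x_t)-f(x_{t+1})\geq\tfrac12\min\{g_t,g_t^2/(L\|x_t-v_t\|^2)\}$, and substituting $\|x_t-v_t\|^2\leq(g_t/(\alpha c))^{2/q}$ from the scaling inequality turns the quadratic branch into $\tfrac{(\alpha c)^{2/q}}{L}g_t^{2-2/q}$. Writing $h_t:=f(x_t)-\min_\mathcal{C}f$, using the convexity bound $g_t\geq h_t$ and the monotonicity of both branches in $g_t$, I obtain the recursion
\begin{align*}
 h_{t+1}\leq h_t-\tfrac12\min\!\left\{h_t,\ \tfrac{(\alpha c)^{2/q}}{L}\,h_t^{2-2/q}\right\}.
\end{align*}
With $s:=(q-2)/q\in\left]0,1\right[$, $\nu:=1/s=q/(q-2)$, and $C:=(\alpha c)^{2/q}/L$ this is precisely a recursion of the form $h_{t+1}\leq h_t-\tfrac12\min\{h_t,Ch_t^{1+s}\}$, whose solutions decay like $t^{-\nu}$, giving the claimed exponent. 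The two branches correspond to a geometric phase (halving, while $h_t$ exceeds the threshold $\sim(1/C)^\nu$) and a polynomial phase below it; since the closed-loop step guarantees $(h_t)$ is nonincreasing, once the iterates enter the polynomial phase they stay there.

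Finally I would prove $h_t\leq M/(t+\beta_1)^\nu$ by induction, with $M$ the stated maximum. The base case $t=1$ uses Lemma~\ref{lem:init}, $h_1\leq LD^2/2$: the first entry of the max, $(LD^2/2)(1+\beta_1)^\nu$, is calibrated so that $M/(1+\beta_1)^\nu\geq LD^2/2$. The shift $\beta_1$ is chosen so that the worst-case (geometric) step is self-consistent, i.e.\ so that $(2+\beta_1)^\nu=2(1+\beta_1)^\nu$; solving this equation gives $\beta_1=(2-2^{(q-2)/q})/(2^{(q-2)/q}-1)$, which explains the appearance of $2^{(q-2)/q}$. The second entry of the max is a fixed multiple of the phase-transition threshold $(1/C)^\nu=(L/(\alpha c)^{2/q})^\nu$, and $\beta_2$ emerges precisely when one calibrates that multiple in the polynomial induction step.

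The main obstacle is the inductive step in the polynomial phase: assuming $h_t\leq M(t+\beta_1)^{-\nu}$, I must show that the guaranteed decrease $\tfrac{C}{2}h_t^{1+s}$ is at least the amount $M[(t+\beta_1)^{-\nu}-(t+1+\beta_1)^{-\nu}]$ that the envelope spends in one step. This reduces to an elementary but delicate inequality once $(t+1+\beta_1)^{-\nu}$ is estimated via $(1+1/(t+\beta_1))^{-\nu}$: the shift $\beta_1$ absorbs the first-order convexity correction, while the constant in $M$ (through $\beta_2$) must dominate the residual. One also checks that the geometric phase respects the envelope a fortiori, since there $h_{t+1}\leq h_t/2$ decays faster than $M/(t+\beta_1)^\nu$. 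As these verifications are carried out in \cite{kerdreux21}, the only genuinely new ingredient is replacing their initialization with the sharper bound $h_1\leq LD^2/2$ from Lemma~\ref{lem:init}, which fixes the exact constants above.
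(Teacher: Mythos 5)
Your proposal is correct and takes essentially the same route as the paper, which presents this theorem as a slight adaptation of the proof in \cite{kerdreux21} seeded with Lemma~\ref{lem:init}: the scaling inequality $g_t\geq\alpha c\|x_t-v_t\|^q$ from uniform convexity, the resulting closed-loop recursion $h_{t+1}\leq h_t-\tfrac{1}{2}\min\{h_t,\,C h_t^{1+s}\}$ with $C=(\alpha c)^{2/q}/L$, and the shifted induction (your calibration $(2+\beta_1)^{q/(q-2)}=2(1+\beta_1)^{q/(q-2)}$ indeed yields the stated $\beta_1$) are exactly the ingredients of that argument. Replacing the initialization by $h_1\leq LD^2/2$ from Lemma~\ref{lem:init}, as you do, is precisely the adaptation the paper makes.
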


\begin{theorem}[\cite{kerdreux21}]
 \label{th:uni2}
 In addition to Assumption~\ref{aspt}, suppose that $\mathcal{C}$ is $(\alpha,q)$-uniformly convex, where $q\geq2$, and $f$ is $\sigma$-sharp on $\mathcal{C}$, both with respect to $\|\cdot\|$. Consider FW (Algorithm~\ref{fw}) with the closed-loop strategy~\eqref{closed}. Then for all $t\geq1$,
 \begin{align*}
  f(x_t)-\min_\mathcal{C}f
  \leq\frac{\max\{(LD^2/2)(1+\beta_1)^{q/(q-1)},2(L/\beta_2)^{q/(q-1)}(\sigma/\alpha)^{2/(q-1)}\}}{(t+\beta_1)^{q/(q-1)}},
 \end{align*}
 where $\beta_1=(2-2^{(q-1)/q})/(2^{(q-1)/q}-1)$ and $\beta_2=(q-1)/q-(1/q)(2^{(q-1)/q}-1)$.
\end{theorem}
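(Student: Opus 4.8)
The plan is to reduce the statement to a scalar recursion on the primal gap $h_t := f(x_t)-\min_\mathcal{C}f$ and then solve that recursion by induction, with the starting value supplied by Lemma~\ref{lem:init}. The recursion will combine three ingredients: the one-step progress guaranteed by the closed-loop step-size~\eqref{closed}, a \emph{scaling inequality} extracted from the $(\alpha,q)$-uniform convexity of $\mathcal{C}$, and a lower bound on the dual gradient norm coming from $\sigma$-sharpness. Write $g_t := \langle x_t - v_t, \nabla f(x_t)\rangle$ for the Frank-Wolfe gap, so that $g_t \geq h_t$ by convexity, and note that if $h_t = 0$ the claim is immediate, so I may assume $h_t>0$ and in particular $\nabla f(x_t)\neq 0$.

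First I would derive the scaling inequality. Applying the definition of $(\alpha,q)$-uniform convexity with $x = x_t$, $y = v_t$, and $z$ a unit vector attaining $\langle z,\nabla f(x_t)\rangle = -\|\nabla f(x_t)\|_*$ produces, for each $\gamma\in[0,1)$, a feasible point; comparing its inner product with $\nabla f(x_t)$ against the optimality of $v_t$ for the linear minimization in~\eqref{closed}, then dividing by $(1-\gamma)$ and letting $\gamma\to1$, yields
\begin{align*}
 g_t \geq \alpha\,\|x_t - v_t\|^q\,\|\nabla f(x_t)\|_*.
\end{align*}
To control the gradient, I would pick a nearest constrained minimizer $x^\star\in\argmin_\mathcal{C}f$ and chain convexity, the dual-norm inequality, and sharpness: $h_t \leq \langle x_t - x^\star,\nabla f(x_t)\rangle \leq \|x_t - x^\star\|\,\|\nabla f(x_t)\|_* \leq \sigma\sqrt{h_t}\,\|\nabla f(x_t)\|_*$, whence $\|\nabla f(x_t)\|_* \geq \sqrt{h_t}/\sigma$.

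Then I would assemble the progress bound. Since~\eqref{closed} minimizes the quadratic upper bound, $h_t - h_{t+1}\geq \max_{\gamma\in[0,1]}(\gamma g_t - (L/2)\gamma^2\|x_t-v_t\|^2)$. If the unconstrained optimizer $g_t/(L\|x_t-v_t\|^2)$ exceeds $1$, the decrease is at least $g_t/2 \geq h_t/2$, a geometric-type decay faster than the target rate and therefore harmless. Otherwise the decrease is at least $g_t^2/(2L\|x_t-v_t\|^2)$; substituting $\|x_t-v_t\|^2 \leq (g_t/(\alpha\|\nabla f(x_t)\|_*))^{2/q}$ from the scaling inequality, then $g_t\geq h_t$ and $\|\nabla f(x_t)\|_*\geq\sqrt{h_t}/\sigma$, collapses everything to
\begin{align*}
 h_t - h_{t+1} \geq \frac{\alpha^{2/q}}{2L\sigma^{2/q}}\,h_t^{(2q-1)/q}.
\end{align*}
The exponent $(2q-1)/q = 2-1/q$ is exactly the one that produces the advertised $\mathcal{O}(t^{-q/(q-1)})$ rate, and for $q=2$ it recovers the $\mathcal{O}(1/t^2)$ of Theorem~\ref{th:t2}.

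Finally I would close the argument by induction on the ansatz $h_t \leq M/(t+\beta_1)^{q/(q-1)}$. The base case $t=1$ is forced by Lemma~\ref{lem:init}, which is precisely why the first term of the numerator carries the factor $(1+\beta_1)^{q/(q-1)}$: it makes the bound equal $LD^2/2$ at $t=1$. With $s=q/(q-1)$, the inductive step amounts to verifying $M(t+\beta_1)^{-s} - K\,(M(t+\beta_1)^{-s})^{(2q-1)/q} \leq M(t+1+\beta_1)^{-s}$, which reduces to lower-bounding the finite difference $(t+\beta_1)^{-s}-(t+1+\beta_1)^{-s}$; this is where $\beta_1$ and $\beta_2$ are tuned so that the slow-regime constant $2(L/\beta_2)^{q/(q-1)}(\sigma/\alpha)^{2/(q-1)}$ suffices. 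I expect the main obstacle to be exactly this bookkeeping: choosing $\beta_1,\beta_2$ so that a single ansatz simultaneously absorbs the base case from Lemma~\ref{lem:init}, the slow-regime recursion, and the benign fast case, and checking the resulting monotonicity inequality for all $t\geq1$ and all $q\geq2$ (including the boundary $q=2$). The uniform-convexity and sharpness inputs are clean; the delicate part is making the induction constants come out exactly as stated.
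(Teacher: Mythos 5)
Your plan is correct and is essentially the paper's own route: the paper proves nothing new here but imports the result from \cite{kerdreux21} ``slightly adapted using Lemma~\ref{lem:init}'', and your reconstruction --- the scaling inequality $\langle x_t-v_t,\nabla f(x_t)\rangle\geq\alpha\|x_t-v_t\|^q\|\nabla f(x_t)\|_*$ from uniform convexity (your $\gamma\to1$ trick is valid and even sharpens the usual midpoint constant), the bound $\|\nabla f(x_t)\|_*\geq\sqrt{h_t}/\sigma$ from sharpness (Fact~\ref{fact:shgd}), the case split on the closed-loop step, and the induction seeded by Lemma~\ref{lem:init} --- is exactly that argument, with the right recursion exponent $2-1/q$ and with $\beta_1$ correctly identified as the threshold making the halving case $(1+1/(t+\beta_1))^{q/(q-1)}\leq2$ hold for all $t\geq1$. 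The only part you leave unverified is the routine constant bookkeeping for $\beta_2$ in the slow-regime inductive step, which you flag honestly and which matches the cited source.
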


\subsection{Convergence rate with an enhanced oracle}

A desired feature for the approximate Carath\'eodory problem is to have a faster convergence rate for FW when the solutions have a sparse representation. However, the results in Section~\ref{sec:faster} do not provide such a result. Let $\mathcal{V}\subset\mathcal{C}$ be the set of vertices of $\mathcal{C}$. By replacing the linear minimization problem in FW with 
\begin{align}
 \min_{v\in\mathcal{V}}\,f(x_t)+\gamma_t\langle v-x_t,\nabla f(x_t)\rangle+\frac{L}{2}\gamma_t^2\|v-x_t\|_2^2,\label{lmo2}
\end{align}
which quantity appears when applying the smoothness inequality for $f$ between $x_t$ and $x_t+\gamma_t(v-x_t)$, an improvement on the general convergence rate can be obtained \cite{garber21}. Note that problem~\eqref{lmo2} is constrained to $\mathcal{V}$ instead of $\mathcal{C}$, and can be written
\begin{align}
 \min_{v\in\mathcal{V}}\,\langle v,\nabla f(x_t)\rangle+\lambda_t\|v-x_t\|_2^2,\label{lmo3}
\end{align}
where $\lambda_t=L\gamma_t/2$. In many situations, it actually reduces to a linear minimization problem over $\mathcal{C}$, and therefore does not burden the algorithm. For example, if $\mathcal{V}\subset\{0,1\}^n$, then $\|v\|_2^2=\langle v,1\rangle$ for all $v\in\mathcal{V}$ so
\begin{align*}
 \argmin_{v\in\mathcal{V}}\,\langle v,\nabla f(x_t)\rangle+\lambda_t\|v-x_t\|_2^2
 &=\argmin_{v\in\mathcal{V}}\,\langle v,\nabla f(x_t)\rangle+\lambda_t\|v\|_2^2-2\lambda_t\langle v,x_t\rangle\\
 &=\argmin_{v\in\mathcal{V}}\,\langle v,\nabla f(x_t)+\lambda_t (1-2x_t)\rangle,
\end{align*}
or, if $\|u\|_2=\|v\|_2$ for all $u,v\in\mathcal{V}$, then
\begin{align*}
 \argmin_{v\in\mathcal{V}}\,\langle v,\nabla f(x_t)\rangle+\lambda_t\|v-x_t\|_2^2
 &=\argmin_{v\in\mathcal{V}}\,\langle v,\nabla f(x_t)-2\lambda_t x_t\rangle.
\end{align*}
Since problem~\eqref{lmo3} is equivalent to $\min_{v\in\mathcal{V}}\|v-(x_t-\nabla f(x_t)/(2\lambda_t))\|_2^2$, it is called the \emph{nearest extreme point} (NEP) oracle. The algorithm is presented in Algorithm~\ref{nep}, where the smoothness constant $L$ is with respect to the $\ell_2$-norm.

\begin{algorithm}[h]
\caption{Frank-Wolfe with a Nearest Extreme Point oracle (NEP-FW)}
\label{nep}
\begin{algorithmic}[1]
\REQUIRE Start point $x_0\in\mathcal{C}$, smoothness constant $L>0$, step-size strategy $(\gamma_t)_{t\in\mathbb{N}}\in\left[0,1\right]^\mathbb{N}$.\\
\FOR{$t=0$ \TO $T-1$}
\STATE$v_t\leftarrow\displaystyle\argmin_{v\in\mathcal{V}}\,\langle v,\nabla f(x_t)\rangle+\frac{L}{2}\gamma_t\|v-x_t\|_2^2$\label{nep:lmo}
\STATE$x_{t+1}\leftarrow x_t+\gamma_t(v_t-x_t)$
\ENDFOR
\end{algorithmic}
\end{algorithm}

With this modification, NEP-FW improves the bound in the convergence rate of FW \cite{garber21}. Comparing to Theorem~\ref{th:fw}, the improvement is significant when the solutions lie in the convex hull of a subset of vertices with small diameter and the start point $x_0$ is of good quality.

\begin{theorem}[\cite{garber21}]
 \label{th:nep}
 Let Assumption~\ref{aspt} hold with respect to the $\ell_2$-norm and consider NEP-FW (Algorithm~\ref{nep}) with the open-loop strategy~\eqref{open}. Then for all $t\geq1$,
 \begin{align*}
  f(x_t)-\min_\mathcal{C}f
  \leq\frac{2L(D_*^2+D_0^2)}{t+2},
 \end{align*}
 where $D_*=\min_{\mathcal{S}\subset\mathcal{V},\argmin_\mathcal{C}f\subset\operatorname{conv}\mathcal{S}}\operatorname{diam}\mathcal{S}$ and $D_0=\operatorname{diam}\{v\in\mathcal{V}\mid f(v)\leq f(x_0)\}$ are defined with respect to the $\ell_2$-norm.
\end{theorem}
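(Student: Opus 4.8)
The plan is to run the standard Frank-Wolfe descent argument, but to exploit the extra quadratic term that the NEP oracle carries. Write $h_t=f(x_t)-\min_{\mathcal{C}}f$ and fix a minimizer $x^*\in\argmin_{\mathcal{C}}f$. The target is a recursion of the shape $h_{t+1}\leq(1-\gamma_t)h_t+\tfrac{L}{2}\gamma_t^2 M$ with $M\leq D_*^2+D_0^2$, which I would then solve by induction against the open-loop schedule $\gamma_t=2/(t+2)$. First I would apply $L$-smoothness along the segment from $x_t$ to $x_{t+1}=x_t+\gamma_t(v_t-x_t)$, giving
\[
 h_{t+1}\leq h_t+\gamma_t\langle v_t-x_t,\nabla f(x_t)\rangle+\tfrac{L}{2}\gamma_t^2\|v_t-x_t\|_2^2.
\]
The whole point of the NEP oracle is that $v_t$ minimizes over $\mathcal{V}$ precisely the quantity $\langle v,\nabla f(x_t)\rangle+\tfrac{L}{2}\gamma_t\|v-x_t\|_2^2$ appearing (up to the factor $\gamma_t$) on the right-hand side.

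Next, for any point $z=\sum_k\beta_k z_k$ with $z_k\in\mathcal{V}$, convexity of $v\mapsto\|v-x_t\|_2^2$ together with the optimality of $v_t$ yields $\langle v_t,\nabla f(x_t)\rangle+\tfrac{L}{2}\gamma_t\|v_t-x_t\|_2^2\leq\langle z,\nabla f(x_t)\rangle+\tfrac{L}{2}\gamma_t\sum_k\beta_k\|z_k-x_t\|_2^2$. Substituting this into the smoothness inequality collapses it nicely: the two copies of $\tfrac{L}{2}\gamma_t^2\|v_t-x_t\|_2^2$ cancel, leaving
\[
 h_{t+1}\leq h_t+\gamma_t\langle z-x_t,\nabla f(x_t)\rangle+\tfrac{L}{2}\gamma_t^2\sum_k\beta_k\|z_k-x_t\|_2^2.
\]
I would then choose $z=x^*$ decomposed over a minimal covering set of vertices: pick $\mathcal{S}\subset\mathcal{V}$ with $x^*\in\operatorname{conv}\mathcal{S}$ and $\operatorname{diam}\mathcal{S}=D_*$, and write $x^*=\sum_k\lambda_k u_k$, $u_k\in\mathcal{S}$. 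Convexity bounds the linear term by $\langle x^*-x_t,\nabla f(x_t)\rangle\leq -h_t$, while the variance identity $\sum_k\lambda_k\|u_k-x_t\|_2^2=\sum_k\lambda_k\|u_k-x^*\|_2^2+\|x^*-x_t\|_2^2$ splits the quadratic term into a piece bounded by $D_*^2$ and the residual $\|x^*-x_t\|_2^2$, producing $h_{t+1}\leq(1-\gamma_t)h_t+\tfrac{L}{2}\gamma_t^2\bigl(D_*^2+\|x^*-x_t\|_2^2\bigr)$.

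The crux, and where I expect the genuine work to lie, is to control the residual $\|x^*-x_t\|_2^2$ by $D_0^2$. This is a purely geometric statement about where the iterates and the NEP-selected vertices live: one must argue that, together with a suitable choice of $x^*$, the iterate $x_t$ stays close enough to $x^*$ that the residual is dominated by the diameter $D_0$ of the sublevel vertex set $\{v\in\mathcal{V}\mid f(v)\leq f(x_0)\}$. The NEP oracle is essential here, since it returns the extreme point nearest (in the proximal sense) to $x_t-\nabla f(x_t)/(L\gamma_t)$, which prevents the selected vertices from straying far from the relevant region; making this quantitatively yield exactly $D_*^2+D_0^2$, rather than the full diameter $D^2$, is the delicate and oracle-specific content I would import from \cite{garber21}. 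The subtlety is real: when $\argmin_{\mathcal{C}}f$ lies in the interior of $\mathcal{C}$ the set of low-objective vertices may shrink to a single point (so $D_0$ is tiny) while $D_*$ carries essentially the whole bound, so the two quantities must be combined carefully rather than expecting one to dominate, and the selected vertices need not themselves satisfy $f(v_t)\leq f(x_0)$.

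Finally, granting the noise bound $h_{t+1}\leq(1-\gamma_t)h_t+\tfrac{L}{2}\gamma_t^2(D_*^2+D_0^2)$, I would close by induction with $\gamma_t=2/(t+2)$ and the ansatz $h_t\leq 2L(D_*^2+D_0^2)/(t+2)$. The base case is clean because $\gamma_0=1$ forces $x_1=v_0$, so the recursion at $t=0$ already gives $h_1\leq\tfrac{L}{2}(D_*^2+D_0^2)\leq 2L(D_*^2+D_0^2)/3$ (alternatively one appeals to Lemma~\ref{lem:init}). The inductive step then reduces to the elementary estimate
\[
 \frac{t+1}{(t+2)^2}\leq\frac{1}{t+3},\qquad\text{i.e.}\qquad (t+1)(t+3)\leq(t+2)^2,
\]
which holds since $t^2+4t+3\leq t^2+4t+4$, and propagating the bound to index $t+3$ delivers the claim.
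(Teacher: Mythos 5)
Your scaffolding is fine, but there is a genuine gap, and it sits exactly where you placed your IOU. The routine parts are all correct: smoothness along the segment, the NEP-oracle comparison against an arbitrary convex decomposition $z=\sum_k\beta_kz_k$, the variance identity $\sum_k\lambda_k\|u_k-x_t\|_2^2=\sum_k\lambda_k\|u_k-x^*\|_2^2+\|x^*-x_t\|_2^2$, the bound $\sum_k\lambda_k\|u_k-x^*\|_2^2\leq D_*^2$, and the induction closing via $(t+1)(t+3)\leq(t+2)^2$. But note first that the paper itself offers no proof of Theorem~\ref{th:nep} --- it imports the statement from \cite{garber21} (only Lemma~\ref{lem:init} and Theorem~\ref{th:hcgs} are proved in the appendix) --- so the step you defer to the reference is not a technicality to be transplanted; it is the entire theorem-specific content, and deferring it means you have reduced the theorem to an unproved lemma.

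Worse, the lemma as you state it --- ``the iterate $x_t$ stays close enough to $x^*$ that the residual is dominated by $D_0$,'' i.e.\ $\|x_t-x^*\|_2^2\leq D_0^2$ for a suitable minimizer --- is false, so the bridge you propose cannot be built. Take $n=1$, $\mathcal{C}=\left[-1,2\right]$, $f(x)=x^2$ (so $L=2$), $x_0=-1$. Then $x^*=0$ is the unique minimizer, $\{v\in\mathcal{V}\mid f(v)\leq f(x_0)\}=\{-1\}$, so $D_0=0$, while $\|x_0-x^*\|_2=1$; moreover at $t=0$ (with $\gamma_0=1$) the NEP oracle compares $\langle-1,-2\rangle+(-1+1)^2=2$ against $\langle2,-2\rangle+(2+1)^2=5$ and returns $v_0=-1$, so $x_1=x_0$ and the residual remains $1>D_0$. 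Consequently your derivation yields the recursion with $D_*^2+\|x_t-x^*\|_2^2$, not $D_*^2+D_0^2$, and the two are not comparable; even your base case fails by this route, since it produces $h_1\leq\frac{L}{2}\bigl(D_*^2+\|x_0-x^*\|_2^2\bigr)$, and the fallback to Lemma~\ref{lem:init} gives $LD^2/2$ with the \emph{ambient} diameter $D$ (and is stated for the plain LMO, not the NEP oracle), which need not be bounded by $D_*^2+D_0^2$. Your own closing caveat --- that $D_0$ may shrink to a point while $D_*$ carries the whole bound --- is precisely the regime in which your residual-to-$D_0$ reduction breaks, so the caveat contradicts the mechanism you propose. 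The argument in \cite{garber21} couples $D_*$ and $D_0$ through a genuinely different device than a pointwise bound $\|x_t-x^*\|_2\leq D_0$; without reproducing it, the proposal is a correct reduction of Theorem~\ref{th:nep} to a missing --- and, in the form gestured at, false --- key step.
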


\section{Application to the approximate Carath\'eodory problem}
\label{sec:cara}

As explained in Section~\ref{sec:fw}, we can obtain a solution to the approximate Carath\'eodory problem by applying the Frank-Wolfe algorithm to problem~\eqref{pb:cara} and the cardinality of the solution can be derived from the convergence analyses in Section~\ref{sec:cv}, with Lemma~\ref{lem:ellp2}. We denote by $\mathcal{V}\subset\mathcal{C}$ the set of vertices and by $D_p=\max_{x,y\in\mathcal{C}}\|y-x\|_p$ the diameter of $\mathcal{C}$ in the $\ell_p$-norm. Note that here, by Lemma~\ref{lem:ellp2} and~\eqref{ellp:grad}, the closed-loop strategy~\eqref{closed} reads
\begin{align}
 \gamma_t
 \leftarrow\min\left\{\frac{\|x_t-x^*\|_p^{2-p}\langle x_t-v_t,\operatorname{sign}(x_t-x^*)|x_t-x^*|^{p-1}\rangle}{(p-1)\|x_t-v_t\|_p^2},1\right\},\label{p:closed}
\end{align}
where $\operatorname{sign}(x_t-x^*)|x_t-x^*|^{p-1}=(\operatorname{sign}([x_t-x^*]_i)|[x_t-x^*]_i|^{p-1})_{i\in\llbracket1,n\rrbracket}\in\mathbb{R}^n$.

Corollary~\ref{cor:fw} follows from Theorem~\ref{th:fw} and shows that FW generates a solution with the optimal $\mathcal{O}(pD_p^2/\epsilon^2)$ number of vertices. Therefore, a solution to the approximate Carath\'eodory problem in the $\ell_p$-norm can be obtained via FW.

\begin{corollary}
 \label{cor:fw}
 By running FW (Algorithm~\ref{fw}) on problem~\eqref{pb:cara} with the closed-loop strategy~\eqref{p:closed} or the open-loop strategy~\eqref{open}, we obtain a point $x\in\mathcal{C}$ with cardinality $\mathcal{O}(pD_p^2/\epsilon^2)$ satisfying $\|x-x^*\|_p\leq\epsilon$.
\end{corollary}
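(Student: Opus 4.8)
The plan is to combine Lemma~\ref{lem:ellp2} with the general convergence rate of Theorem~\ref{th:fw}, and then translate the required number of iterations into a cardinality bound. First I would observe that since $x^*\in\mathcal{C}$ and $f(x^*)=0$ is the global minimum of the nonnegative objective $f\colon x\mapsto(1/2)\|x-x^*\|_p^2$ from problem~\eqref{pb:cara}, we have $\min_\mathcal{C}f=f(x^*)=0$. Consequently the primal gap is exactly $f(x_t)-\min_\mathcal{C}f=(1/2)\|x_t-x^*\|_p^2$, so the target accuracy $\|x_t-x^*\|_p\leq\epsilon$ is equivalent to $f(x_t)-\min_\mathcal{C}f\leq\epsilon^2/2$.

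Next I would check that Assumption~\ref{aspt} is satisfied: by Lemma~\ref{lem:ellp2}, $f$ is convex and $(p-1)$-smooth with respect to the $\ell_p$-norm, so I may invoke Theorem~\ref{th:fw} with $L=p-1$ and $D=D_p$. In the closed-loop case this yields $f(x_t)-\min_\mathcal{C}f\leq 4(p-1)D_p^2/(t+2)$ (with a factor $2$ in place of $4$ for the open-loop case). Requiring the right-hand side to be at most $\epsilon^2/2$ and solving for $t$ gives $t+2\geq 8(p-1)D_p^2/\epsilon^2$, so after $t=\mathcal{O}(pD_p^2/\epsilon^2)$ iterations the iterate $x_t$ satisfies $\|x_t-x^*\|_p\leq\epsilon$.

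Finally I would convert the iteration count into a cardinality bound. Since each iteration of FW (Algorithm~\ref{fw}) introduces at most one new vertex $v_t$ into the convex decomposition of $x_{t+1}=x_t+\gamma_t(v_t-x_t)$, and the algorithm starts from the single vertex $x_0$, the iterate $x_t$ is a convex combination of at most $t+1$ vertices. Hence its cardinality is $\mathcal{O}(pD_p^2/\epsilon^2)$, matching the claimed bound and recovering the optimal rate of the approximate Carath\'eodory theorem.

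There is essentially no difficult step here: the statement is an immediate corollary of the preceding lemma and theorem. The only points requiring any care are the identification $\min_\mathcal{C}f=0$, which relies on $x^*$ being feasible, and the linear-in-iterations cardinality accounting, which uses that FW adds at most one vertex per step; both are routine.
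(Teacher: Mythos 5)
Your proposal is correct and follows exactly the route the paper intends for Corollary~\ref{cor:fw}: invoke Lemma~\ref{lem:ellp2} to get $(p-1)$-smoothness with respect to the $\ell_p$-norm, apply Theorem~\ref{th:fw} with $L=p-1$ and $D=D_p$ until the primal gap $f(x_t)-\min_\mathcal{C}f=(1/2)\|x_t-x^*\|_p^2$ drops below $\epsilon^2/2$, and count at most one new vertex per iteration. The two points you flag as requiring care --- that $\min_\mathcal{C}f=0$ because $x^*\in\mathcal{C}$, and that the iterate after $t$ steps starting from a vertex $x_0$ has cardinality at most $t+1$ --- are precisely the observations the paper relies on in Section~\ref{sec:fw}, so there is nothing to add.
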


Another possibility is to run NEP-FW. Following Theorem~\ref{th:nep}, Corollary~\ref{cor:nep} shows that we can obtain a better cardinality bound when $x^*$ is the convex combination of a subset of vertices with small diameter and $x_0$ is a good start.

\begin{corollary}
 \label{cor:nep}
 By running NEP-FW (Algorithm~\ref{nep}) on problem~\eqref{pb:cara} with the open-loop strategy~\eqref{open}, we obtain a point $x\in\mathcal{C}$ with cardinality $\mathcal{O}(p(D_*^2+D_0^2)/\epsilon^2)$ satisfying $\|x-x^*\|_p\leq\epsilon$, where $D_*=\min_{\mathcal{S}\subset\mathcal{V},x^*\in\operatorname{conv}\mathcal{S}}\operatorname{diam}\mathcal{S}$ and $D_0=\operatorname{diam}\{v\in\mathcal{V}\mid f(v)\leq f(x_0)\}$ are defined with respect to the $\ell_2$-norm.
\end{corollary}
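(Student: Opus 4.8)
The plan is to apply Theorem~\ref{th:nep} to the objective $f\colon x\mapsto(1/2)\|x-x^*\|_p^2$ and then read off the cardinality from the iteration count, mirroring the proof of Corollary~\ref{cor:fw}. The single hypothesis that needs reconciling first is that Theorem~\ref{th:nep} is stated for $\ell_2$-smoothness (because the NEP oracle measures distances in the $\ell_2$-norm), whereas Lemma~\ref{lem:ellp2} only supplies $(p-1)$-smoothness with respect to the $\ell_p$-norm. I would dispatch this by noting that $p\geq2$ forces $\|z\|_p\leq\|z\|_2$ for every $z\in\mathbb{R}^n$, so the smoothness inequality in the $\ell_p$-norm immediately upgrades to the same inequality in the $\ell_2$-norm with an unchanged constant: $f$ is $(p-1)$-smooth with respect to $\|\cdot\|_2$, and Assumption~\ref{aspt} holds with $L=p-1$.

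With $L=p-1$, Theorem~\ref{th:nep} gives $f(x_t)-\min_\mathcal{C}f\leq2(p-1)(D_*^2+D_0^2)/(t+2)$ for all $t\geq1$. Next I would observe that $\min_\mathcal{C}f=f(x^*)=0$ (since $f\geq0$ and $x^*\in\mathcal{C}$), and that the minimizer is unique, so $\argmin_\mathcal{C}f=\{x^*\}$; this makes the constant $D_*$ of Theorem~\ref{th:nep} coincide with $\min_{\mathcal{S}\subset\mathcal{V},\,x^*\in\operatorname{conv}\mathcal{S}}\operatorname{diam}\mathcal{S}$ as stated in the corollary, while $D_0$ is already verbatim the same. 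Note that the $\ell_2$-diameter $D$ appearing in Assumption~\ref{aspt} is finite by compactness but does not enter the NEP-FW bound, so nothing further about it is needed.

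Finally I would convert the rate into the target accuracy: the condition $\|x_t-x^*\|_p\leq\epsilon$ is equivalent to $f(x_t)\leq\epsilon^2/2$, and the displayed bound forces this as soon as $t+2\geq4(p-1)(D_*^2+D_0^2)/\epsilon^2$, i.e. after $t=\mathcal{O}(p(D_*^2+D_0^2)/\epsilon^2)$ iterations. Since each NEP-FW iteration adds at most one new vertex $v_t$ to the convex decomposition of the iterate (with $\gamma_0=1$ replacing $x_0$ by $v_0$), the cardinality of the returned point is at most this iteration count, yielding the claimed $\mathcal{O}(p(D_*^2+D_0^2)/\epsilon^2)$. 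I do not anticipate a genuine obstacle; the only thing to get right is the $\ell_p$-to-$\ell_2$ smoothness transfer, which is precisely what licenses feeding the $\ell_p$-flavored Carath\'eodory objective into the $\ell_2$-based NEP-FW guarantee.
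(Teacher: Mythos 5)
Your proposal is correct and follows essentially the same route as the paper: the paper's proof is exactly the one-line observation that Theorem~\ref{th:nep} applies to the objective of Lemma~\ref{lem:ellp2} because $\|\cdot\|_p\leq\|\cdot\|_2$ for $p\in\left[2,+\infty\right[$ transfers the $(p-1)$-smoothness from the $\ell_p$-norm to the $\ell_2$-norm with the same constant. Your additional bookkeeping (uniqueness of the minimizer so that $\argmin_\mathcal{C}f=\{x^*\}$ identifies $D_*$, the conversion $f(x_t)\leq\epsilon^2/2\iff\|x_t-x^*\|_p\leq\epsilon$, and one vertex per iteration) is exactly what the paper leaves implicit, and it is all accurate.
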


\begin{proof}
 The result follows from Theorem~\ref{th:nep} and Lemma~\ref{lem:ellp2}, because $f$ is also $(p-1)$-smooth with respect to the $\ell_2$-norm since $\|\cdot\|_p\leq\|\cdot\|_2$ for $p\in\left[2,+\infty\right[$.
\end{proof}

It is likely that the point $x^*$ we want to approximate is in the interior of $\mathcal{C}$. Following Theorem~\ref{th:int}, Corollary~\ref{cor:int} improves the cardinality bound in this scenario to a logarithmic dependence on $1/\epsilon$. Note that a similar result is obtained in \cite{mirrokni17cara}, however they assume knowledge of the radius of an ball centered at $x^*$, which may not be available. This is not required in FW as the method naturally adapts to the structure of the problem.

\begin{corollary}
 \label{cor:int}
 Suppose that $x^*\in\operatorname{int}\mathcal{C}$ and let $r_p$ be the radius of an affine ball centered at $x^*$ and included in $\mathcal{C}$, with respect to the $\ell_p$-norm. Then by running FW (Algorithm~\ref{fw}) on problem~\eqref{pb:cara} with the closed-loop strategy~\eqref{p:closed}, we obtain a point $x\in\mathcal{C}$ with cardinality $\mathcal{O}(p(D_p/r_p)^2\ln(1/\epsilon))$ satisfying $\|x-x^*\|_p\leq\epsilon$.
\end{corollary}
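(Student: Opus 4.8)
The plan is to recognize this as a direct application of Theorem~\ref{th:int}, with the identifications $L = p-1$, $\mu = 1$, $D = D_p$, $r = r_p$, and $\|\cdot\| = \|\cdot\|_p$, followed by converting the resulting linear convergence rate into an iteration count.

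First I would verify the hypotheses of Theorem~\ref{th:int}. Lemma~\ref{lem:ellp2} supplies all the analytic properties: $f$ is convex, $(p-1)$-smooth, and $1$-gradient dominated on $\mathbb{R}^n$ with respect to the $\ell_p$-norm, so Assumption~\ref{aspt} and the gradient-domination requirement both hold. The geometric hypothesis $\argmin_{\mathbb{R}^n} f \cap \operatorname{ri}\mathcal{C} \neq \varnothing$ follows because $\|x - x^*\|_p = 0$ if and only if $x = x^*$, so $x^*$ is the unique unconstrained minimizer of $f$, and $x^* \in \operatorname{ri}\mathcal{C}$ by assumption. Since also $x^* \in \mathcal{C}$, we have $\min_\mathcal{C} f = f(x^*) = 0$.

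Invoking Theorem~\ref{th:int} then gives, for all $t \geq 1$,
\begin{align*}
 f(x_t) \leq \frac{(p-1)D_p^2}{2}\left(1 - \frac{1}{p-1}\left(\frac{r_p}{D_p}\right)^2\right)^{t-1}.
\end{align*}
I would then translate the stopping criterion $\|x_t - x^*\|_p \leq \epsilon$, equivalently $f(x_t) \leq \epsilon^2/2$, into a bound on $t$. Applying $1 - a \leq e^{-a}$ with $a = (1/(p-1))(r_p/D_p)^2$ and taking logarithms, the criterion is met once
\begin{align*}
 t - 1 \geq (p-1)\left(\frac{D_p}{r_p}\right)^2 \ln\!\left(\frac{(p-1)D_p^2}{\epsilon^2}\right),
\end{align*}
that is, after $\mathcal{O}(p(D_p/r_p)^2 \ln(1/\epsilon))$ iterations. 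Because each FW step adds at most one vertex to the convex decomposition of the iterate (Line~\ref{fw:new}), the cardinality of $x_t$ is at most $t + 1$, which yields the claimed bound.

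I do not expect a genuine obstacle here; the only point needing care is confirming that the unconstrained minimizer lies in $\operatorname{ri}\mathcal{C}$, which is immediate because the target $x^*$, the unique global minimizer of $f$, and the prescribed interior point all coincide. This coincidence is precisely what activates the linear rate of Theorem~\ref{th:int}, and it explains why FW adapts to the interior case automatically, without requiring $r_p$ as an input, in contrast to the mirror-descent approach of \cite{mirrokni17cara} that must be restarted with knowledge of $r_p$.
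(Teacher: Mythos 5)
Your proof is correct and is precisely the argument the paper intends: Corollary~\ref{cor:int} is stated as an immediate consequence of Theorem~\ref{th:int} combined with Lemma~\ref{lem:ellp2}, with exactly your identifications $L=p-1$, $\mu=1$, $D=D_p$, $r=r_p$, and the observation that $x^*$ is the unique unconstrained minimizer of $f$ and lies in $\operatorname{ri}\mathcal{C}$ (using also the paper's remark that gradient domination on $\mathbb{R}^n$ passes to any convex subset). Your conversion of the linear rate into an iteration count via $1-a\leq e^{-a}$, together with the bound that the iterate after $t$ steps has cardinality at most $t+1$, simply fills in the routine details the paper leaves implicit.
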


Another scenario is when $\mathcal{C}$ has a particular shape. Following Theorem~\ref{th:t2}, Corollary~\ref{cor:t2} shows an improved cardinality bound when $\mathcal{C}$ is strongly convex. It is actually subsumed by Corollary~\ref{cor:uni2}, which follows from Theorem~\ref{th:uni2}. Note that $2(q_p-1)/q_p\in\left[1,2\right[$ for $q_p\in\left[2,+\infty\right[$.

\begin{corollary}
\label{cor:t2}
 Suppose that $\mathcal{C}$ is $\alpha_p$-strongly convex with respect to the $\ell_p$-norm. Then by running FW (Algorithm~\ref{fw}) on problem~\eqref{pb:cara} with the closed-loop strategy~\eqref{p:closed}, we obtain a point $x\in\mathcal{C}$ with cardinality $\mathcal{O}((\sqrt{p}D_p+p/\alpha_p)/\epsilon)$ satisfying $\|x-x^*\|_p\leq\epsilon$.
\end{corollary}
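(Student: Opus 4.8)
The plan is to invoke Theorem~\ref{th:t2} directly, since its hypotheses match the present setting once we record the relevant constants for the objective $f\colon x\mapsto(1/2)\|x-x^*\|_p^2$. By Lemma~\ref{lem:ellp2}, $f$ is convex, $(p-1)$-smooth, and $1$-gradient dominated on $\mathbb{R}^n$ with respect to the $\ell_p$-norm; moreover, since $x^*\in\mathcal{C}$, we have $\min_\mathcal{C}f=f(x^*)=0$. Combined with the hypothesis that $\mathcal{C}$ is $\alpha_p$-strongly convex with respect to the $\ell_p$-norm, all the assumptions of Theorem~\ref{th:t2} hold with $L=p-1$, $D=D_p$, $\mu=1$, and $\alpha=\alpha_p$.

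First I would substitute these constants into the bound of Theorem~\ref{th:t2}, obtaining for all $t\geq1$
\[
 f(x_t)\leq\frac{\max\{(9/2)(p-1)D_p^2,\,144((p-1)/\alpha_p)^2\}}{(t+2)^2}.
\]
Since achieving $\|x_t-x^*\|_p\leq\epsilon$ is equivalent to $f(x_t)\leq\epsilon^2/2$, I would force the right-hand side to be at most $\epsilon^2/2$ and solve for $t$. Using $\sqrt{\max\{a,b\}}=\max\{\sqrt{a},\sqrt{b}\}\leq\sqrt{a}+\sqrt{b}$ together with the crude bounds $\sqrt{p-1}\leq\sqrt{p}$ and $p-1\leq p$, this yields $t+2=\mathcal{O}((\sqrt{p}D_p+p/\alpha_p)/\epsilon)$.

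Finally, I would translate the iteration count into a cardinality bound: FW (Algorithm~\ref{fw}) starts from the single vertex $x_0$ and appends at most one new vertex $v_t$ to the convex decomposition at each iteration, so the cardinality of $x_t$ is at most $t+1$. The stopping criterion $f(x_t)\leq\epsilon^2/2$ is met after $\mathcal{O}((\sqrt{p}D_p+p/\alpha_p)/\epsilon)$ iterations, which gives the claimed cardinality.

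There is no serious obstacle here, as the statement is a corollary of a convergence rate already in hand; the only points requiring care are (i) confirming that the gradient-domination constant $\mu=1$ furnished by Lemma~\ref{lem:ellp2} is the one entering Theorem~\ref{th:t2}, and (ii) simplifying the maximum after taking square roots so that the two regimes $\sqrt{p}D_p$ and $p/\alpha_p$ appear additively rather than multiplicatively. As indicated in the text preceding the statement, one may alternatively route through Theorem~\ref{th:uni2}, viewing $\alpha_p$-strong convexity as $(\alpha_p,2)$-uniform convexity and exploiting the $\sqrt{2}$-sharpness of $f$ from Lemma~\ref{lem:ellp2}; specializing Corollary~\ref{cor:uni2} to $q_p=2$ reproduces the same order $\mathcal{O}((\sqrt{p}D_p+p/\alpha_p)/\epsilon)$, which is why the present corollary is subsumed by it.
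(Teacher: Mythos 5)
Your proposal is correct and takes essentially the same route as the paper, which states Corollary~\ref{cor:t2} as an immediate consequence of Theorem~\ref{th:t2} combined with the constants $L=p-1$, $\mu=1$, $D=D_p$ from Lemma~\ref{lem:ellp2} (leaving the substitution and the iteration-count-to-cardinality translation implicit, which you carry out correctly). Your closing remark also matches the paper, which notes that the corollary is subsumed by Corollary~\ref{cor:uni2} via Theorem~\ref{th:uni2} with $q_p=2$ and the $\sqrt{2}$-sharpness of $f$.
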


\begin{corollary}
 \label{cor:uni2}
 Suppose that $\mathcal{C}$ is $(\alpha_p,q_p)$-uniformly convex with respect to the $\ell_p$-norm, where $q_p\in\left[2,+\infty\right[$. Then by running FW (Algorithm~\ref{fw}) on problem~\eqref{pb:cara} with the closed-loop strategy~\eqref{p:closed}, we obtain a point $x\in\mathcal{C}$ with cardinality $\mathcal{O}((pD_p^2)^{(q_p-1)/q_p}+p/\alpha_p^{2/q_p})/\epsilon^{2(q_p-1)/q_p})$ satisfying $\|x-x^*\|_p\leq\epsilon$.
\end{corollary}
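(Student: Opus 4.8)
The plan is to obtain Corollary~\ref{cor:uni2} as a direct specialization of Theorem~\ref{th:uni2} to the objective $f\colon x\mapsto(1/2)\|x-x^*\|_p^2$ of problem~\eqref{pb:cara}. First I would check that both hypotheses of that theorem hold in the present setting. By Lemma~\ref{lem:ellp2}, $f$ is $(p-1)$-smooth and $\sqrt{2}$-sharp on $\mathcal{C}$, both with respect to the $\ell_p$-norm, so I take $L=p-1$ and $\sigma=\sqrt{2}$; by assumption $\mathcal{C}$ is $(\alpha_p,q_p)$-uniformly convex with $q_p\in\left[2,+\infty\right[$ and has $\ell_p$-diameter $D_p$. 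Crucially, since $x^*\in\mathcal{C}$ we have $\min_\mathcal{C}f=f(x^*)=0$, so the target condition $\|x_t-x^*\|_p\leq\epsilon$ is exactly the primal-gap condition $f(x_t)-\min_\mathcal{C}f\leq\epsilon^2/2$. Unlike Theorem~\ref{th:uni1}, Theorem~\ref{th:uni2} does not require the unconstrained minimizer to lie outside $\mathcal{C}$, so it applies even though the optimum is attained at an interior (or boundary) point $x^*\in\mathcal{C}$.

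Next I would substitute these constants into the rate of Theorem~\ref{th:uni2}. Since $\beta_1$ and $\beta_2$ depend only on $q_p$, they are constants for the purpose of the $\mathcal{O}$-bound, and after absorbing $(1+\beta_1)^{q_p/(q_p-1)}$, $\beta_2^{-q_p/(q_p-1)}$, and the numerical factor from $\sigma=\sqrt{2}$, the numerator of the rate is $\mathcal{O}(pD_p^2+p^{q_p/(q_p-1)}/\alpha_p^{2/(q_p-1)})$ (the $\max$ of the two terms converting harmlessly into their sum inside $\mathcal{O}$). Imposing that the rate be at most $\epsilon^2/2$ and solving for $t$ gives
\begin{align*}
 t+\beta_1=\mathcal{O}\!\left(\left(\frac{pD_p^2+p^{q_p/(q_p-1)}/\alpha_p^{2/(q_p-1)}}{\epsilon^2}\right)^{(q_p-1)/q_p}\right),
\end{align*}
and the number of FW iterations, hence the cardinality of the resulting iterate, is of this order.

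Finally, to recast the bound in the stated form I would invoke the subadditivity $(a+b)^s\leq a^s+b^s$, valid for the exponent $s=(q_p-1)/q_p\in\left[1/2,1\right[$, splitting the bracket into its two summands. The first yields $(pD_p^2)^{(q_p-1)/q_p}/\epsilon^{2(q_p-1)/q_p}$. For the second, the key computation is the exponent arithmetic: raising $p^{q_p/(q_p-1)}/\alpha_p^{2/(q_p-1)}$ to the power $(q_p-1)/q_p$ collapses the $p$-exponent to $1$ and the $\alpha_p$-exponent to $2/q_p$, producing $p/\alpha_p^{2/q_p}$, again over $\epsilon^{2(q_p-1)/q_p}$. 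Recombining the two summands gives the claimed cardinality $\mathcal{O}((pD_p^2)^{(q_p-1)/q_p}+p/\alpha_p^{2/q_p})/\epsilon^{2(q_p-1)/q_p})$, and Corollary~\ref{cor:t2} is recovered as the special case $q_p=2$ (where the exponent $2(q_p-1)/q_p=1$).

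I expect the only genuinely delicate point to be the bookkeeping of exponents in this last step — in particular verifying that the $p$- and $\alpha_p$-powers simplify as claimed and that the $\epsilon$-exponent comes out as $2(q_p-1)/q_p$ — together with the routine conversion of the $\max$ in Theorem~\ref{th:uni2} into a sum inside $\mathcal{O}$-notation. Everything else is a direct transcription of the constants furnished by Lemma~\ref{lem:ellp2}.
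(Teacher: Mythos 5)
Your proposal is correct and follows exactly the paper's route: the paper's own proof is the one-line remark that the result follows from Theorem~\ref{th:uni2} and Lemma~\ref{lem:ellp2} (with the uniform bounds $\beta_1\leq\sqrt{2}$ and $1/\beta_2\leq2+\sqrt{2}$), and your argument simply makes explicit the substitution $L=p-1$, $\sigma=\sqrt{2}$, the primal-gap threshold $\epsilon^2/2$, and the exponent arithmetic $(p^{q_p/(q_p-1)}/\alpha_p^{2/(q_p-1)})^{(q_p-1)/q_p}=p/\alpha_p^{2/q_p}$ that the paper leaves implicit. Your bookkeeping checks out in full, including the subadditivity step with $s=(q_p-1)/q_p\in\left[1/2,1\right[$ and the observation that Corollary~\ref{cor:t2} is the case $q_p=2$.
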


\begin{proof}
 The result follows from Theorem~\ref{th:uni2} and Lemma~\ref{lem:ellp2}, with $\beta_1\leq\sqrt{2}$ and $1/\beta_2\leq2+\sqrt{2}$.
\end{proof}

\section{The case \texorpdfstring{$p\in\left[1,2\right[\cup\{+\infty\}$}{p in [1,2[ U \{+inf\}}}
\label{sec:12}

In this section, we study the approximate Carath\'eodory problem when $p\in\left[1,2\right[\cup\{+\infty\}$. In this case, the function $x\in\mathbb{R}^n\mapsto(1/2)\|x-x^*\|_p^2$ is no longer smooth so we cannot apply the Frank-Wolfe algorithm directly. The problem is to find a sparse approximate solution to
\begin{align}
 \min_{x\in\mathcal{C}}\|x-x^*\|_p,\label{pb:12}
\end{align}
where the objective $x\in\mathbb{R}^n\mapsto\|x-x^*\|_p$ is convex and $1$-Lipschitz-continuous with respect to the $\ell_p$-norm, by the triangle inequality, but not smooth. Note that, compared to problem~\eqref{pb:cara} for $p\in\left[2,+\infty\right[$, we removed the square so that the objective is Lipschitz-continuous.

Similarly to the case $p\in\left[2,+\infty\right[$, we will present the convergence rate of a variant of the Frank-Wolfe algorithm, then deduce a bound for the approximate Carath\'eodory problem. We consider the general problem
\begin{align}
 \min_{x\in\mathcal{C}}f(x),\label{pb:hcgs}
\end{align}
where $f\colon\mathbb{R}^n\to\mathbb{R}$ is a convex, continuous, but possibly nonsmooth function (Assumption~\ref{aspt2}). We can smoothen $f$ via its Moreau envelope $f_\beta\colon x\in\mathbb{R}^n\mapsto\min_{y\in\mathbb{R}^n}f(y)+(1/(2\beta))\|x-y\|_2^2$ \cite{moreau65}, where $\beta>0$ is the smoothing parameter. The Moreau envelope is a smooth convex function (Lemma~\ref{lem:fbeta}). The proximity operator of $f$ is $\operatorname{prox}_f\colon x\in\mathbb{R}^n\mapsto\argmin_{y\in\mathbb{R}^n}f(y)+(1/2)\|x-y\|_2^2$ \cite{moreau62}.

\begin{assumption}
 \label{aspt2}
 Let $\mathcal{C}\subset\mathbb{R}^n$ be a compact convex set with diameter $D_2$ and $f\colon\mathbb{R}^n\to\mathbb{R}$ be a convex $G_2$-Lipschitz-continuous function, all with respect to the $\ell_2$-norm.
\end{assumption}

\begin{lemma}[{\cite[Prop.~12.15 and Prop.~12.30]{bc17}}]
 \label{lem:fbeta}
 Let $f\colon\mathbb{R}^n\to\mathbb{R}$ be a convex continuous function and $\beta>0$. Then $f_\beta\colon x\in\mathbb{R}^n\mapsto\min_{y\in\mathbb{R}^n}f(y)+(1/(2\beta))\|x-y\|_2^2$ is convex and $1/\beta$-smooth with respect to the $\ell_2$-norm. Its gradient at $x\in\mathbb{R}^n$ is
 \begin{align}
  \nabla f_{\beta}(x)
  =\frac{1}{\beta}(x-\operatorname{prox}_{\beta f}(x)).\label{fbeta:grad}
 \end{align}
\end{lemma}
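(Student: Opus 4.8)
The plan is to identify the inner minimizer, establish the gradient formula directly by exhibiting the claimed vector as the unique subgradient, and then obtain $1/\beta$-smoothness from the nonexpansiveness of the proximity operator.

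First I would note that for fixed $x$ the inner objective $y\mapsto f(y)+(1/(2\beta))\|x-y\|_2^2$ is strongly convex (the quadratic term dominates), hence coercive, so the minimum is attained at a unique point, which by definition is $p_x:=\operatorname{prox}_{\beta f}(x)$. Consequently $f_\beta(x)=f(p_x)+(1/(2\beta))\|x-p_x\|_2^2$ is finite everywhere, and it is the infimal convolution of $f$ with the quadratic $(1/(2\beta))\|\cdot\|_2^2$; since both are convex, $f_\beta$ is convex. The first-order optimality condition for the inner problem reads $(1/\beta)(x-p_x)\in\partial f(p_x)$, which furnishes the candidate gradient $g(x):=(1/\beta)(x-p_x)$.

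Next I would verify $g(x)\in\partial f_\beta(x)$. For arbitrary $x'$ with $p':=\operatorname{prox}_{\beta f}(x')$, the subgradient inequality for $f$ at $p_x$ gives $f(p')\geq f(p_x)+\langle g(x),p'-p_x\rangle$, and adding $(1/(2\beta))\|x'-p'\|_2^2$ to both sides bounds $f_\beta(x')$ from below. Subtracting $f_\beta(x)$ and rearranging, the desired inequality $f_\beta(x')\geq f_\beta(x)+\langle g(x),x'-x\rangle$ reduces, after writing $a=x-p_x$ and $b=x'-p'$, to $\|a-b\|_2^2\geq0$, which holds. This shows $g(x)$ is a subgradient; since $g$ is a single-valued continuous selection of $\partial f_\beta$ (continuity follows from the Lipschitz bound below), $f_\beta$ is differentiable with $\nabla f_\beta(x)=g(x)=(1/\beta)(x-\operatorname{prox}_{\beta f}(x))$, which is \eqref{fbeta:grad}.

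Finally, for smoothness I would show $g$ is $(1/\beta)$-Lipschitz. Combining the optimality conditions at $x$ and $x'$ with the monotonicity of $\partial f$ yields the firm nonexpansiveness $\langle x-x',p_x-p'\rangle\geq\|p_x-p'\|_2^2$; expanding $\|(x-x')-(p_x-p')\|_2^2$ and using this bound gives $\|(x-x')-(p_x-p')\|_2\leq\|x-x'\|_2$, hence $\|g(x)-g(x')\|_2\leq(1/\beta)\|x-x'\|_2$. A $(1/\beta)$-Lipschitz gradient together with convexity yields the quadratic upper bound defining $1/\beta$-smoothness. The main obstacle is the differentiability claim: producing one subgradient is routine, but concluding that $\partial f_\beta(x)$ is the singleton $\{g(x)\}$ requires invoking that a finite convex function admitting a continuous subgradient selection is continuously differentiable (alternatively, one bypasses this by computing $f_\beta^*=f^*+(\beta/2)\|\cdot\|_2^2$, which is $\beta$-strongly convex, and reading off $1/\beta$-smoothness of $f_\beta=f_\beta^{**}$ from the smoothness--strong-convexity duality already used in Lemma~\ref{lem:ellp2}).
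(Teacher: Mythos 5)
Your proof is correct, and it is worth noting that the paper offers no proof of this lemma at all: it is quoted directly from \cite[Prop.~12.15 and Prop.~12.30]{plc}, so the relevant comparison is with the standard textbook argument, which your proposal essentially reproduces — unique inner minimizer by strong convexity, convexity of $f_\beta$ via partial minimization of a jointly convex function, the optimality condition $(1/\beta)(x-p_x)\in\partial f(p_x)$, the subgradient verification (your reduction to $\|a-b\|_2^2\geq0$ checks out), and firm nonexpansiveness of $\operatorname{prox}_{\beta f}$ for the $(1/\beta)$-Lipschitz bound. Two remarks. First, there is a presentational circularity: you invoke continuity of $g$ (to upgrade your subgradient selection to a gradient) before you have established it; this is harmless, since your Lipschitz estimate for $g$ uses only monotonicity of $\partial f$ and the two prox optimality conditions, never differentiability, so the steps merely need reordering. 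Second, both heavier ingredients you flag — the fact that a continuous subgradient selection of a finite convex function forces differentiability, and the conjugacy detour via $f_\beta^*=f^*+(\beta/2)\|\cdot\|_2^2$ — can be bypassed by a one-line squeeze: taking $y=p_x+(x'-x)$ as a feasible point in the defining minimum gives
\begin{align*}
 f_\beta(x')
 \leq f(p_x)+\frac{1}{2\beta}\|(x'-x)+(x-p_x)\|_2^2
 =f_\beta(x)+\langle g(x),x'-x\rangle+\frac{1}{2\beta}\|x'-x\|_2^2,
\end{align*}
which together with your subgradient lower bound yields $0\leq f_\beta(x')-f_\beta(x)-\langle g(x),x'-x\rangle\leq(1/(2\beta))\|x'-x\|_2^2$, hence Fr\'echet differentiability, formula~\eqref{fbeta:grad}, and $1/\beta$-smoothness simultaneously — this is essentially how the cited propositions are proved. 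Your conjugate-duality alternative is also valid and has the merit of matching the machinery already used in Lemma~\ref{lem:ellp2}.
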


Our analysis will rely on the $\ell_2$-norm. Lemma~\ref{lem:ellp} states the properties of the objective in the approximate Carath\'eodory problem~\eqref{pb:12} with respect to the $\ell_2$-norm.

\begin{lemma}
 \label{lem:ellp}
 Let $\mathcal{C}\subset\mathbb{R}^n$ be a compact convex set, $x^*\in\mathcal{C}$, $p\in\left[1,2\right[\cup\{+\infty\}$, and $f\colon x\in\mathbb{R}^n\mapsto\|x-x^*\|_p$. Then $f$ is convex and Lipschitz-continuous on $\mathbb{R}^n$ with respect to the $\ell_2$-norm, with constant $n^{1/p-1/2}$ if $p\in\left[1,2\right[$, else $1$ if $p=+\infty$.
\end{lemma}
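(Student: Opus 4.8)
The plan is to treat the two asserted claims separately, each reducing to a standard fact. Convexity is immediate: the map $x\mapsto x-x^*$ is affine and $\|\cdot\|_p$ is a norm, hence convex, so $f$ is convex as the composition of a convex function with an affine map. The substance of the lemma is the Lipschitz estimate, which I would obtain by first establishing Lipschitz continuity with respect to the $\ell_p$-norm and then converting to the $\ell_2$-norm via a standard comparison between the $\ell_p$- and $\ell_2$-norms.

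First I would apply the reverse triangle inequality: for all $x,y\in\mathbb{R}^n$,
\begin{align*}
 |f(y)-f(x)|
 =\bigl|\,\|y-x^*\|_p-\|x-x^*\|_p\,\bigr|
 \leq\|y-x\|_p,
\end{align*}
so that $f$ is $1$-Lipschitz continuous with respect to the $\ell_p$-norm, as already noted in the text around problem~\eqref{pb:12}. It then remains to bound $\|z\|_p$ by a multiple of $\|z\|_2$ for $z=y-x$. For $p\in\left[1,2\right[$, the power-mean (Hölder) inequality gives $\|z\|_p\leq n^{1/p-1/2}\|z\|_2$; substituting this yields $|f(y)-f(x)|\leq n^{1/p-1/2}\|y-x\|_2$, i.e., $f$ is $n^{1/p-1/2}$-Lipschitz continuous with respect to the $\ell_2$-norm. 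For $p=+\infty$, I would instead use $\|z\|_\infty=\max_i|[z]_i|\leq\|z\|_2$, which gives the constant $1$.

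There is no genuine obstacle here; the only points requiring care are to invoke the correct direction of the norm comparison, namely $\|\cdot\|_p\leq n^{1/p-1/2}\|\cdot\|_2$ for $p<2$ (the $\ell_p$-norm dominates the $\ell_2$-norm as $p$ decreases, so the inflating factor appears on the $\ell_2$ side), and to split off the case $p=+\infty$ separately, where the constant collapses to $1$ because the sup-norm is dominated pointwise by the Euclidean norm.
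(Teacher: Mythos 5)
Your proof is correct and follows essentially the same route as the paper's: the reverse triangle inequality gives $1$-Lipschitz continuity with respect to the $\ell_p$-norm, and the norm comparisons $\|\cdot\|_p\leq n^{1/p-1/2}\|\cdot\|_2$ for $p\in\left[1,2\right[$ and $\|\cdot\|_\infty\leq\|\cdot\|_2$ convert this to the stated $\ell_2$-Lipschitz constants. The only difference is that you spell out the convexity argument and the Hölder-based norm comparison, which the paper leaves implicit.
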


\begin{proof}
 By the triangle inequality, the function $f$ is $1$-Lipschitz-continuous with respect to the $\ell_p$-norm. We conclude using $\|\cdot\|_p\leq n^{1/p-1/2}\|\cdot\|_2$ for $p\in\left[1,2\right[$ and $\|\cdot\|_\infty\leq\|\cdot\|_2$.
\end{proof}

We will present two methods to find an $\epsilon$-approximate solution to problem~\eqref{pb:12} with the same cardinality guarantees. When $p\in\left[1,2\right[$, we ensure a cardinality bound $\mathcal{O}(n^{(2-p)/p}D_2^2/\epsilon^2)$. Compared to the bound $\mathcal{O}((1/p)^{1/(p-1)}(D_p/\epsilon)^{p/(p-1)})$ obtained from \cite[Lem.~D]{bourgain89} for $p\in\left]1,2\right[$ (see \cite{ivanov19}), it improves the dependence to the accuracy $\epsilon$ but introduces a factor that is linear in the dimension $n$ in the worst case: $n^{(2-p)/p}<n$ for all $p\in\left]1,2\right[$. This is probably due to our approach working with the $\ell_2$-norm. Note however that when $p=1$, the bound is $\mathcal{O}(nD_2^2/\epsilon^2)$, which is not an improvement on the bound $\mathcal{O}(n)$ from the exact Carath\'eodory theorem. When $p=+\infty$, we ensure a cardinality bound $\mathcal{O}(D_2^2/\epsilon^2)$. This is a dimension-free result compared to the bound $\mathcal{O}(\ln(n)D_\infty^2/\epsilon^2)$ from \cite{barman15cara}. 

\subsection{A nonsmooth variant of the Frank-Wolfe algorithm}

The first method to solve problem~\eqref{pb:hcgs} is to use a variant of the Frank-Wolfe algorithm, the Hybrid Conditional Gradient-Smoothing algorithm (HCGS, Algorithm~\ref{hcgs}), developed in \cite{argyriou14} for addressing composite convex problems.  The idea is to design a strategy $(\beta_t)_{t\in\mathbb{N}}$ and to use the gradient $\nabla f_{\beta_t}(x_t)$ as a surrogate in the Frank-Wolfe algorithm. 

\begin{algorithm}[h]
\caption{Hybrid Conditional Gradient-Smoothing (HCGS)}
\label{hcgs}
\begin{algorithmic}[1]
\REQUIRE Start point $x_0\in\mathcal{C}$, smoothing strategy $(\beta_t)_{t\in\mathbb{N}}\in\mathbb{R}_{++}^\mathbb{N}$, step-size strategy $(\gamma_t)_{t\in\mathbb{N}}\in\left[0,1\right]^\mathbb{N}$.\\
\FOR{$t=0$ \TO $T-1$}
\STATE$v_{t}\leftarrow\argmin\limits_{v\in\mathcal{C}}\,\langle v,\nabla f_{\beta_t}(x_t)\rangle$\hfill$\triangleright${ see~\eqref{fbeta:grad}}\label{hcgs:lmo}
\STATE$x_{t+1}\leftarrow x_t+\gamma_t(v_t-x_t)$
\ENDFOR
\end{algorithmic}
\end{algorithm}

Theorem~\ref{th:hcgs} presents the convergence rate of HCGS. It is adapted from \cite{argyriou14} and uses a slightly different smoothing strategy. We present a proof in Appendix~\ref{apx:proofs} for completeness. Corollaries~\ref{cor:hcgs}--\ref{cor:hcgsinf} present the cardinality bounds for the approximate Carath\'eodory problem using Lemma~\ref{lem:ellp}.

\begin{theorem}[\cite{argyriou14}]
 \label{th:hcgs}
 Let Assumption~\ref{aspt2} hold and consider HCGS (Algorithm~\ref{hcgs}) with the open-loop strategy~\eqref{open} and $\beta_t\leftarrow2(D_2/G_2)/\sqrt{t+2}$ for all $t\in\mathbb{N}$. Then for all $t\geq2$,
 \begin{align*}
  f(x_t)-\min_\mathcal{C}f
  \leq\frac{4G_2D_2}{\sqrt{t+1}}.
 \end{align*}
\end{theorem}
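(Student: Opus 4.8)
The plan is to run the standard Frank--Wolfe descent analysis on the smooth surrogate $f_{\beta_t}$ at each step, but to track the \emph{smoothed} primal gap $u_t := f_{\beta_t}(x_t) - \min_\mathcal{C} f$ rather than the true gap, converting back to $f$ only at the very end. Fix a minimizer $x^* \in \argmin_\mathcal{C} f$ and write $f^* = f(x^*) = \min_\mathcal{C} f$. By Lemma~\ref{lem:fbeta}, $f_{\beta_t}$ is convex and $(1/\beta_t)$-smooth with respect to $\|\cdot\|_2$, and since $v_t$ minimizes $\langle\,\cdot\,,\nabla f_{\beta_t}(x_t)\rangle$ over $\mathcal{C}$, the one-step argument behind Lemma~\ref{lem:init} applied to $f_{\beta_t}$ gives
\[
 f_{\beta_t}(x_{t+1}) - f^* \leq (1-\gamma_t)\,u_t + \frac{\gamma_t^2 D_2^2}{2\beta_t},
\]
where I also used convexity, $\|v_t-x_t\|_2\leq D_2$, and $f_{\beta_t}(x^*)\leq f(x^*)=f^*$.

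The crux is passing from $f_{\beta_t}(x_{t+1})$ to $u_{t+1}=f_{\beta_{t+1}}(x_{t+1})-f^*$, i.e.\ controlling the effect of decreasing the smoothing parameter. The key estimate I would establish is that for $\beta'\leq\beta$ and any $x$,
\[
 0 \leq f_{\beta'}(x) - f_{\beta}(x) \leq \tfrac{1}{2}G_2^2(\beta-\beta'),
\]
which follows from the characterization $\nabla f_\beta(x)=\beta^{-1}(x-\operatorname{prox}_{\beta f}(x))$ in~\eqref{fbeta:grad} together with $\|\nabla f_\beta(x)\|_2\leq G_2$ (the proximal optimality condition places $\beta^{-1}(x-\operatorname{prox}_{\beta f}(x))$ in $\partial f$, whose elements have $\ell_2$-norm at most $G_2$); comparing the two proximal objectives at the $\beta$-minimizer yields the bound. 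Applying this with $\beta=\beta_t$, $\beta'=\beta_{t+1}$ turns the previous inequality into the recursion
\[
 u_{t+1} \leq (1-\gamma_t)\,u_t + e_t, \qquad e_t := \frac{\gamma_t^2 D_2^2}{2\beta_t} + \frac{G_2^2}{2}(\beta_t-\beta_{t+1}).
\]
With $\gamma_t=2/(t+2)$ and $\beta_t=2(D_2/G_2)/\sqrt{t+2}$, both terms of $e_t$ are of order $(t+2)^{-3/2}$; this is exactly the payoff of tracking $u_t$, since only the \emph{increment} $\beta_t-\beta_{t+1}$ enters rather than the full smoothing gap $\beta_tG_2^2/2$ (of order $t^{-1/2}$), which would not be summable against the weak $1-\gamma_t$ contraction.

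Finally I would telescope. Since $1-\gamma_t = t/(t+2)$, the weights satisfy $\prod_{s=k+1}^{t-1}(1-\gamma_s) = \tfrac{(k+1)(k+2)}{t(t+1)}$, and because $\gamma_0=1$ the initial gap $u_0$ is annihilated, giving
\[
 u_t \leq \frac{1}{t(t+1)}\sum_{k=0}^{t-1}(k+1)(k+2)\,e_k.
\]
Substituting $e_k$, each summand is $\mathcal{O}\big((k+1)(k+2)(k+2)^{-3/2}\big)=\mathcal{O}(\sqrt{k})\,G_2D_2$, so a routine integral comparison bounds the sum by $\mathcal{O}(t^{3/2})\,G_2D_2$, whence $u_t=\mathcal{O}(G_2D_2/\sqrt t)$. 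Converting back via $f(x_t)-f^* \leq u_t + (f(x_t)-f_{\beta_t}(x_t)) \leq u_t + \beta_tG_2^2/2 = u_t + G_2D_2/\sqrt{t+2}$ and tracking the constants carefully then yields the claimed bound $4G_2D_2/\sqrt{t+1}$ for $t\geq 2$. The main obstacles are the increment estimate for the Moreau envelope (the conceptual heart of the argument, and the step that keeps the smoothing error summable) and the bookkeeping of constants in the telescoped sum—in particular bounding $\sum_k (k+1)(k+2)(\beta_k-\beta_{k+1})$ tightly enough that the final constant remains $4$.
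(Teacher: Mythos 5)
Your proposal is correct and follows essentially the same route as the paper's proof in Appendix~\ref{apx:proofs}: the same one-step recursion for the smoothed gap $f_{\beta_t}(x_t)-f(x^*)$ via smoothness of $f_{\beta_t}$ and optimality of $v_t$, the same increment bound $f_{\beta_{t+1}}(x)\leq f_{\beta_t}(x)+(\beta_t-\beta_{t+1})G_2^2/2$ (which the paper takes directly from \cite[Lem.~4.2]{argyriou14}), the same $(t+1)(t+2)$-weighted telescoping, and the same final conversion $f\leq f_\beta+\beta G_2^2/2$. One small caveat inside your sketch of the increment lemma: comparing the two proximal objectives at the $\beta$-minimizer only yields $f_{\beta'}(x)-f_\beta(x)\leq(\beta/\beta')\,G_2^2(\beta-\beta')/2$, so the stated constant requires instead, e.g., the envelope identity $\frac{\mathrm{d}}{\mathrm{d}\beta}f_\beta(x)=-\frac{1}{2}\|\nabla f_\beta(x)\|_2^2$ integrated over $\left[\beta',\beta\right]$ together with $\|\nabla f_\beta(x)\|_2\leq G_2$ --- though even your weaker version (inflation factor $\sqrt{(t+3)/(t+2)}\leq\sqrt{3/2}$ here) would preserve the $\mathcal{O}(G_2D_2/\sqrt{t})$ rate.
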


\begin{corollary}
 \label{cor:hcgs}
 Let $p\in\left[1,2\right[$. By running HCGS (Algorithm~\ref{hcgs}) on problem~\eqref{pb:12} with the open-loop strategy~\eqref{open} and $\beta_t\leftarrow2(D_2/n^{1/p-1/2})/\sqrt{t+2}$ for all $t\in\mathbb{N}$, we obtain a point $x\in\mathcal{C}$ with cardinality $\mathcal{O}(n^{(2-p)/p}D_2^2/\epsilon^2)$ satisfying $\|x-x^*\|_p\leq\epsilon$.
\end{corollary}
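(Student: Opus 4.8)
The plan is to reduce the statement to a direct application of Theorem~\ref{th:hcgs}, once the relevant constants have been identified via Lemma~\ref{lem:ellp}. First I would invoke Lemma~\ref{lem:ellp} to certify that the objective $f\colon x\mapsto\|x-x^*\|_p$ of problem~\eqref{pb:12} is convex and $G_2$-Lipschitz continuous with respect to the $\ell_2$-norm with $G_2=n^{1/p-1/2}$, so that Assumption~\ref{aspt2} holds with this $G_2$ and the $\ell_2$-diameter $D_2$. The prescribed smoothing parameters $\beta_t=2(D_2/n^{1/p-1/2})/\sqrt{t+2}$ then coincide exactly with the choice $\beta_t=2(D_2/G_2)/\sqrt{t+2}$ required in Theorem~\ref{th:hcgs}, so running HCGS with the open-loop strategy~\eqref{open} yields, for all $t\geq2$,
\begin{align*}
 f(x_t)-\min_\mathcal{C}f
 \leq\frac{4G_2D_2}{\sqrt{t+1}}
 =\frac{4n^{1/p-1/2}D_2}{\sqrt{t+1}}.
\end{align*}

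Next I would observe that since $x^*\in\mathcal{C}$, we have $\min_\mathcal{C}f=f(x^*)=0$, so the left-hand side above is exactly $\|x_t-x^*\|_p$. The requirement $\|x_t-x^*\|_p\leq\epsilon$ therefore amounts to $4n^{1/p-1/2}D_2/\sqrt{t+1}\leq\epsilon$, i.e.\ $t+1\geq16n^{2/p-1}D_2^2/\epsilon^2$. Using $2(1/p-1/2)=2/p-1=(2-p)/p$, this is met after $t=\mathcal{O}(n^{(2-p)/p}D_2^2/\epsilon^2)$ iterations.

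Finally I would bound the cardinality. Each iteration of HCGS forms $x_{t+1}=x_t+\gamma_t(v_t-x_t)$ as a convex combination of the current iterate and a single new vertex $v_t\in\mathcal{C}$, so the cardinality of $x_t$ with respect to this decomposition grows by at most one per step and is at most $t+1$, starting from the single vertex $x_0$. Combined with the iteration count above, this gives the claimed cardinality $\mathcal{O}(n^{(2-p)/p}D_2^2/\epsilon^2)$.

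I do not expect a genuine obstacle here, as the statement is a corollary of already-established results; the only point demanding care is the exponent bookkeeping, namely ensuring that the dimension enters through $G_2^2=n^{2/p-1}=n^{(2-p)/p}$ rather than linearly, and that one uses the $p\in\left[1,2\right[$ branch of Lemma~\ref{lem:ellp} (not the $p=+\infty$ branch) to supply the correct Lipschitz constant.
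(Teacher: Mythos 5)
Your proposal is correct and matches the paper's intended argument exactly: the paper derives Corollary~\ref{cor:hcgs} precisely by combining Theorem~\ref{th:hcgs} with the Lipschitz constant $G_2=n^{1/p-1/2}$ from Lemma~\ref{lem:ellp}, noting $\min_\mathcal{C}f=f(x^*)=0$, and using that each HCGS iteration adds at most one vertex. Your exponent bookkeeping $G_2^2=n^{2/p-1}=n^{(2-p)/p}$ is the only delicate point and you handle it correctly.
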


\begin{corollary}
 \label{cor:hcgsinf}
 By running HCGS (Algorithm~\ref{hcgs}) on problem~\eqref{pb:12} with the open-loop strategy~\eqref{open} and $\beta_t\leftarrow2D_2/\sqrt{t+2}$ for all $t\in\mathbb{N}$, we obtain a point $x\in\mathcal{C}$ with cardinality $\mathcal{O}(D_2^2/\epsilon^2)$ satisfying $\|x-x^*\|_\infty\leq\epsilon$.
\end{corollary}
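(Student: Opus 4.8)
The plan is to invoke Theorem~\ref{th:hcgs} directly, after using Lemma~\ref{lem:ellp} to certify that its hypotheses hold with Lipschitz constant $G_2=1$. First I would apply Lemma~\ref{lem:ellp} in the case $p=+\infty$: it states that $f\colon x\mapsto\|x-x^*\|_\infty$ is convex and $1$-Lipschitz continuous with respect to the $\ell_2$-norm. Hence Assumption~\ref{aspt2} is satisfied with $G_2=1$ and $\ell_2$-diameter $D_2$, and the smoothing strategy $\beta_t=2(D_2/G_2)/\sqrt{t+2}$ prescribed in Theorem~\ref{th:hcgs} specializes to exactly $\beta_t=2D_2/\sqrt{t+2}$, matching the corollary's statement.

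Next, Theorem~\ref{th:hcgs} gives, for all $t\geq2$,
\[
 f(x_t)-\min_\mathcal{C}f\leq\frac{4G_2D_2}{\sqrt{t+1}}=\frac{4D_2}{\sqrt{t+1}}.
\]
The key simplification is that $\min_\mathcal{C}f=0$, since $x^*\in\mathcal{C}$ and $f(x^*)=\|x^*-x^*\|_\infty=0$. Therefore $\|x_t-x^*\|_\infty\leq 4D_2/\sqrt{t+1}$, and this is at most $\epsilon$ as soon as $t+1\geq 16D_2^2/\epsilon^2$, i.e.\ after $t=\mathcal{O}(D_2^2/\epsilon^2)$ iterations.

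Finally, to convert the iteration count into a cardinality bound, I would observe that each step of HCGS (Algorithm~\ref{hcgs}) forms $x_{t+1}=x_t+\gamma_t(v_t-x_t)$, a convex combination of the current iterate $x_t$ and a single new vertex $v_t$; hence the number of vertices in the decomposition grows by at most one per iteration. Starting from the vertex $x_0$, the iterate $x_t$ then has cardinality at most $t+1=\mathcal{O}(D_2^2/\epsilon^2)$, which is the claimed dimension-free bound.

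There is no serious obstacle once the three prior results are in hand; the only points requiring care are matching the smoothing parameter (which relies on $G_2=1$) and using $\min_\mathcal{C}f=0$ to turn the objective-gap estimate into an $\ell_\infty$-accuracy guarantee. All the real content sits upstream, in Theorem~\ref{th:hcgs} and in the elementary inequality $\|\cdot\|_\infty\leq\|\cdot\|_2$ underlying Lemma~\ref{lem:ellp} — it is precisely this dimension-independent comparison that eliminates the $\ln(n)$ factor present in the bound of \cite{barman15cara}.
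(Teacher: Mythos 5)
Your proposal is correct and follows exactly the route the paper intends: the paper derives Corollary~\ref{cor:hcgsinf} directly from Theorem~\ref{th:hcgs} with $G_2=1$ supplied by Lemma~\ref{lem:ellp} (via $\|\cdot\|_\infty\leq\|\cdot\|_2$), using $\min_\mathcal{C}f=0$ and the fact that each HCGS iteration adds at most one vertex. Your explicit verification that the smoothing strategy $\beta_t=2(D_2/G_2)/\sqrt{t+2}$ specializes to $2D_2/\sqrt{t+2}$, and the count $t+1\geq 16D_2^2/\epsilon^2$, fill in precisely the details the paper leaves implicit.
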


\subsection{Applying Frank-Wolfe to the smoothed objective}

We can view HCGS as applying one iteration of the Frank-Wolfe algorithm to the sequence of problems
\begin{align*}
 \min_{x\in\mathcal{C}}f_{\beta_t}(x).
\end{align*}
However, if the desired level of accuracy $\epsilon>0$ is given, which is probably the case in the approximate Carath\'eodory problem, then we can apply the Frank-Wolfe algorithm to the fixed problem
\begin{align}
 \min_{x\in\mathcal{C}}f_\beta(x),\label{pb:fbeta}
\end{align}
where $\beta\leftarrow\epsilon/G_2^2$, to obtain a solution to the original problem~\eqref{pb:hcgs}. Theorem~\ref{th:fw:fbeta} formalizes this statement, and Corollaries~\ref{cor:fw:fbeta}--\ref{cor:fw:fbetainf} present the cardinality bounds for the approximate Carath\'eodory problem using Lemma~\ref{lem:ellp}.

\begin{theorem}
 \label{th:fw:fbeta}
 Let Assumption~\ref{aspt2} hold. Let $\epsilon>0$ be the desired level of accuracy and set $\beta\leftarrow\epsilon/G_2^2$. Then by running FW (Algorithm~\ref{fw}) on problem~\eqref{pb:fbeta} with the open-loop strategy~\eqref{open}, we obtain a point $x\in\mathcal{C}$ with cardinality $\mathcal{O}(G_2^2D_2^2/\epsilon^2)$ satisfying $f(x)-\min_\mathcal{C}f\leq\epsilon$.
\end{theorem}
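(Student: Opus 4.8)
The plan is to run FW on the smoothed surrogate $f_\beta$ and then transfer the resulting convergence guarantee back to the original nonsmooth objective $f$. The mechanism that makes this work is a uniform sandwich $f_\beta(x)\leq f(x)\leq f_\beta(x)+\tfrac{\beta G_2^2}{2}$ on $\mathcal{C}$, so that controlling $f_\beta(x_t)-\min_\mathcal{C}f_\beta$ via the smooth FW rate automatically controls $f(x_t)-\min_\mathcal{C}f$ up to the additive smoothing error $\beta G_2^2/2$. Choosing $\beta=\epsilon/G_2^2$ makes this error exactly $\epsilon/2$, and the FW rate contributes the remaining $\epsilon/2$ after the appropriate number of iterations.

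First I would establish the sandwich. The upper inequality $f_\beta(x)\leq f(x)$ is immediate from taking $y=x$ in the definition of the Moreau envelope. For the lower inequality, let $y=\operatorname{prox}_{\beta f}(x)$ be the minimizer, so that $f_\beta(x)=f(y)+\tfrac{1}{2\beta}\|x-y\|_2^2$. Using the $G_2$-Lipschitz continuity from Assumption~\ref{aspt2}, $f(y)\geq f(x)-G_2\|x-y\|_2$, whence $f_\beta(x)\geq f(x)-G_2\|x-y\|_2+\tfrac{1}{2\beta}\|x-y\|_2^2$. The scalar quadratic $s\mapsto-G_2s+\tfrac{1}{2\beta}s^2$ is bounded below by $-\tfrac{\beta G_2^2}{2}$, which gives $f_\beta(x)\geq f(x)-\tfrac{\beta G_2^2}{2}$ for every $x\in\mathbb{R}^n$.

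Next I would invoke the convergence rate. By Lemma~\ref{lem:fbeta}, $f_\beta$ is convex and $1/\beta$-smooth with respect to the $\ell_2$-norm, so Assumption~\ref{aspt} holds for $f_\beta$ on $\mathcal{C}$ with $L=1/\beta$ and $D=D_2$. Theorem~\ref{th:fw} then yields $f_\beta(x_t)-\min_\mathcal{C}f_\beta\leq\tfrac{2D_2^2}{\beta(t+2)}$. Combining with the sandwich and the elementary bound $\min_\mathcal{C}f_\beta\leq f_\beta(x^*)\leq f(x^*)=\min_\mathcal{C}f$ for $x^*\in\argmin_\mathcal{C}f$, I obtain
\begin{align*}
 f(x_t)-\min_\mathcal{C}f\leq\frac{2D_2^2}{\beta(t+2)}+\frac{\beta G_2^2}{2}.
\end{align*}
Substituting $\beta=\epsilon/G_2^2$ turns the right-hand side into $\tfrac{2G_2^2D_2^2}{\epsilon(t+2)}+\tfrac{\epsilon}{2}$, so the target $f(x_t)-\min_\mathcal{C}f\leq\epsilon$ is reached once $t+2\geq4G_2^2D_2^2/\epsilon^2$, that is, after $\mathcal{O}(G_2^2D_2^2/\epsilon^2)$ iterations. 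Since FW adds at most one new vertex per iteration, the cardinality of $x_t$ is at most $t+1=\mathcal{O}(G_2^2D_2^2/\epsilon^2)$.

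The main obstacle — more a matter of care than of genuine difficulty — is the smoothing error estimate and the balancing it forces. The two terms of the bound pull in opposite directions: shrinking $\beta$ improves the approximation $f-f_\beta$ but degrades the smoothness constant $1/\beta$, and hence the FW rate. The choice $\beta=\epsilon/G_2^2$ equalizes the orders of the two contributions, and it is precisely this balance, rather than any subtlety in the FW analysis itself (which is black-boxed through Theorem~\ref{th:fw}), that pins down the final $\epsilon^{-2}$ cardinality.
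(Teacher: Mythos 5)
Your proposal is correct and follows essentially the same route as the paper: apply the smooth FW rate (Theorem~\ref{th:fw}) to the $1/\beta$-smooth envelope $f_\beta$, transfer the guarantee back via the sandwich $f_\beta\leq f\leq f_\beta+\beta G_2^2/2$, and balance the two error terms with $\beta=\epsilon/G_2^2$. The only difference is cosmetic: the paper cites the sandwich inequality from \cite[Lem.~4.2]{argyriou14}, whereas you prove it directly from the prox point and Lipschitz continuity, which merely makes the argument more self-contained.
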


\begin{proof}
 By Lemma~\ref{lem:fbeta}, $f_\beta$ is convex and $1/\beta$-smooth with respect to the $\ell_2$-norm. By Theorem~\ref{th:fw}, after $\floor{4G_2^2D_2^2/\epsilon^2}$ iterations, we have a point $x\in\mathcal{C}$ such that
 \begin{align*}
  f_\beta(x)-\min_\mathcal{C}f_\beta
  &\leq\frac{2D_2^2/\beta}{\floor{4G_2^2D_2^2/\epsilon^2}+2}\\
  &\leq\frac{2G_2^2D_2^2/\epsilon}{4G_2^2D_2^2/\epsilon^2}\\
  &=\frac{\epsilon}{2}.
 \end{align*}
 By \cite[Lem.~4.2]{argyriou14}, $f_\beta\leq f\leq f_\beta+\beta G_2^2/2$ so
 \begin{align*}
  f(x)-\min_\mathcal{C}f
  &\leq f_\beta(x)+\frac{\beta G_2^2}{2}-\min_\mathcal{C}f_\beta\\
  &\leq\epsilon.
 \end{align*}
\end{proof}

\begin{corollary}
 \label{cor:fw:fbeta}
 Let $p\in\left[1,2\right[$. Let $\epsilon>0$ be the desired level of accuracy and set $\beta\leftarrow\epsilon/n^{(2-p)/p}$. Then by running FW (Algorithm~\ref{fw}) on problem~\eqref{pb:fbeta} with the open-loop strategy~\eqref{open}, we obtain a point $x\in\mathcal{C}$ with cardinality $\mathcal{O}(n^{(2-p)/p}D_2^2/\epsilon^2)$ satisfying $\|x-x^*\|_p\leq\epsilon$.
\end{corollary}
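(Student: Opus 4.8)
The plan is to specialize Theorem~\ref{th:fw:fbeta} to the objective $f\colon x\mapsto\|x-x^*\|_p$ of problem~\eqref{pb:12}, after pinning down its Lipschitz constant with respect to the $\ell_2$-norm via Lemma~\ref{lem:ellp}. By that lemma, for $p\in\left[1,2\right[$ the function $f$ is convex and $G_2$-Lipschitz continuous with respect to the $\ell_2$-norm with $G_2=n^{1/p-1/2}$. The first thing I would record is the elementary identity $G_2^2=n^{2/p-1}=n^{(2-p)/p}$, which is precisely why the prescribed smoothing parameter $\beta\leftarrow\epsilon/n^{(2-p)/p}$ coincides with the generic choice $\beta=\epsilon/G_2^2$ demanded by Theorem~\ref{th:fw:fbeta}.

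With this identification in place, the second step is a direct invocation of Theorem~\ref{th:fw:fbeta}: running FW on the fixed smoothed problem~\eqref{pb:fbeta} with the open-loop strategy~\eqref{open} produces, after $\mathcal{O}(G_2^2D_2^2/\epsilon^2)=\mathcal{O}(n^{(2-p)/p}D_2^2/\epsilon^2)$ iterations, a point $x\in\mathcal{C}$ whose cardinality is bounded by this same iteration count (each FW step appends at most one vertex to the convex decomposition) and which satisfies $f(x)-\min_\mathcal{C}f\leq\epsilon$.

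The final step is to convert this functional-accuracy guarantee into the stated $\ell_p$-distance bound. Since $x^*\in\mathcal{C}$, the minimum value $\min_\mathcal{C}f=\min_{x\in\mathcal{C}}\|x-x^*\|_p$ equals $0$ and is attained at $x=x^*$; hence $f(x)-\min_\mathcal{C}f\leq\epsilon$ reduces to $\|x-x^*\|_p\leq\epsilon$, which is the desired conclusion.

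I do not expect a genuine obstacle, as the statement is a corollary of Theorem~\ref{th:fw:fbeta}; the only points requiring care are the exponent bookkeeping $(n^{1/p-1/2})^2=n^{(2-p)/p}$ and the observation that attainability of the minimum (guaranteed by $x^*\in\mathcal{C}$) is exactly what allows $\min_\mathcal{C}f$ to be dropped from the bound. This parallels the role played by $x^*\in\mathcal{C}$ in the earlier corollaries of Section~\ref{sec:cara}.
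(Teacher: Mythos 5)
Your proposal is correct and matches the paper's intended argument exactly: the corollary is derived by combining Theorem~\ref{th:fw:fbeta} with the Lipschitz constant $G_2=n^{1/p-1/2}$ from Lemma~\ref{lem:ellp}, so that $\beta=\epsilon/G_2^2=\epsilon/n^{(2-p)/p}$ and the cardinality bound $\mathcal{O}(G_2^2D_2^2/\epsilon^2)=\mathcal{O}(n^{(2-p)/p}D_2^2/\epsilon^2)$ follow immediately. Your observation that $x^*\in\mathcal{C}$ makes $\min_\mathcal{C}f=0$, converting the primal-gap guarantee into $\|x-x^*\|_p\leq\epsilon$, is precisely the remaining step the paper leaves implicit.
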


\begin{corollary}
 \label{cor:fw:fbetainf}
 Let $\epsilon>0$ be the desired level of accuracy and set $\beta\leftarrow\epsilon$. Then by running FW (Algorithm~\ref{fw}) on problem~\eqref{pb:fbeta} with the open-loop strategy~\eqref{open}, we obtain a point $x\in\mathcal{C}$ with cardinality $\mathcal{O}(D_2^2/\epsilon^2)$ satisfying $\|x-x^*\|_\infty\leq\epsilon$.
\end{corollary}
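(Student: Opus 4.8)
The plan is to recognize this statement as the $p=+\infty$ specialization of Theorem~\ref{th:fw:fbeta}, following exactly the template used for Corollary~\ref{cor:fw:fbeta}. The whole argument is a direct instantiation: I would identify the Lipschitz constant $G_2$ for the $\ell_\infty$ objective, observe that the prescribed smoothing parameter $\beta=\epsilon$ coincides with the choice $\beta=\epsilon/G_2^2$ required by the theorem, and then read off the conclusion.

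First I would verify that the objective $f\colon x\mapsto\|x-x^*\|_\infty$ satisfies Assumption~\ref{aspt2}. By Lemma~\ref{lem:ellp} applied with $p=+\infty$, the function $f$ is convex and $1$-Lipschitz continuous with respect to the $\ell_2$-norm, so Assumption~\ref{aspt2} holds with Lipschitz constant $G_2=1$. Consequently the smoothing parameter prescribed by Theorem~\ref{th:fw:fbeta}, namely $\beta=\epsilon/G_2^2$, reduces to $\beta=\epsilon$, matching the choice stated in the corollary.

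Next I would record that, since $x^*\in\mathcal{C}$, the constrained minimum is $\min_\mathcal{C}f=f(x^*)=\|x^*-x^*\|_\infty=0$, so that the primal gap $f(x)-\min_\mathcal{C}f$ equals $\|x-x^*\|_\infty$ exactly. Invoking Theorem~\ref{th:fw:fbeta} with $G_2=1$ then produces a point $x\in\mathcal{C}$ with cardinality $\mathcal{O}(G_2^2D_2^2/\epsilon^2)=\mathcal{O}(D_2^2/\epsilon^2)$ and $f(x)-\min_\mathcal{C}f\leq\epsilon$, which by the previous observation is precisely $\|x-x^*\|_\infty\leq\epsilon$, as claimed.

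I expect no genuine obstacle here: the result is essentially a one-line corollary of Theorem~\ref{th:fw:fbeta}, with the only bookkeeping being the confirmation that $G_2=1$ for the $\ell_\infty$ norm and that the vanishing of $\min_\mathcal{C}f$ lets the primal-gap bound translate directly into the desired $\ell_\infty$-approximation guarantee. The dimension-free nature of the bound, in contrast to the $\mathcal{O}(\ln(n)D_\infty^2/\epsilon^2)$ result of \cite{barman15cara}, is inherited entirely from the fact that $\|\cdot\|_\infty\leq\|\cdot\|_2$ gives a dimension-independent Lipschitz constant in Lemma~\ref{lem:ellp}.
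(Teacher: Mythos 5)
Your proposal is correct and takes essentially the same route as the paper, which presents this corollary as a direct instantiation of Theorem~\ref{th:fw:fbeta} using Lemma~\ref{lem:ellp} at $p=+\infty$ to get $G_2=1$, whence $\beta=\epsilon/G_2^2=\epsilon$ and the cardinality bound $\mathcal{O}(G_2^2D_2^2/\epsilon^2)=\mathcal{O}(D_2^2/\epsilon^2)$. Your explicit remark that $x^*\in\mathcal{C}$ forces $\min_\mathcal{C}f=0$, so the primal-gap guarantee $f(x)-\min_\mathcal{C}f\leq\epsilon$ is literally $\|x-x^*\|_\infty\leq\epsilon$, is exactly the bookkeeping the paper leaves implicit.
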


\section{Sparse approximate projections in the \texorpdfstring{$\ell_p$}{lp}-norm}
\label{sec:proj}

The Frank-Wolfe algorithm can also be used to find sparse approximate projections in the $\ell_p$-norm, where $p\in\left[1,+\infty\right]$. When $p\in\left[2,+\infty\right[$, this is problem~\eqref{pb:cara} with $x^*\notin\mathcal{C}$. Then the same cardinality bounds from Corollaries~\ref{cor:fw}--\ref{cor:nep}, \ref{cor:t2}, and~\ref{cor:uni2} hold with 
\begin{align}
 \|x-x^*\|_p^2-\operatorname{dist}_p(x^*,\mathcal{C})^2\leq\epsilon^2.\label{acc}
\end{align}
Furthermore, Corollaries~\ref{cor:out}--\ref{cor:uni1} follow from Theorems~\ref{th:out} and~\ref{th:uni1}, together with Lemma~\ref{lem:ellp2}. Note that to apply Corollary~\ref{cor:nep} here, $D_*$ is defined with respect to $\operatorname{proj}_p(x^*,\mathcal{C})$ instead of $x^*$. When $p\in\left[1,2\right[\cup\{+\infty\}$, then the same cardinality bounds from Corollaries~\ref{cor:hcgs}--\ref{cor:hcgsinf} and~\ref{cor:fw:fbeta}--\ref{cor:fw:fbetainf} hold with 
\begin{align*}
 \|x-x^*\|_p-\operatorname{dist}_p(x^*,\mathcal{C})\leq\epsilon.
\end{align*}

\begin{corollary}
\label{cor:out}
 Let $p\in\left[2,+\infty\right[$ and $x^*\in\mathbb{R}^n\setminus\mathcal{C}$. Suppose that $\mathcal{C}$ is $\alpha_p$-strongly convex with respect to the $\ell_p$-norm. Then by running FW on problem~\eqref{pb:cara} with the closed-loop strategy~\eqref{p:closed}, we obtain a point $x\in\mathcal{C}$ with cardinality $\mathcal{O}(p/(c_p\alpha_p)\cdot\ln(1/\epsilon))$ satisfying
 \begin{align*}
  \|x-x^*\|_p^2-\operatorname{dist}_p(x^*,\mathcal{C})^2\leq\epsilon^2,
 \end{align*}
 where $c_p=\operatorname{dist}_p(x^*,\mathcal{C})>0$.
\end{corollary}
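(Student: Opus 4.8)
The plan is to invoke the linear convergence rate of Theorem~\ref{th:out} for a strongly convex set in the exterior case, applied to $f\colon x\mapsto(1/2)\|x-x^*\|_p^2$ from problem~\eqref{pb:cara}, and then to translate the resulting primal-gap bound into the claimed cardinality. First I would record the three ingredients that Theorem~\ref{th:out} requires. By Lemma~\ref{lem:ellp2}, $f$ is convex and $(p-1)$-smooth with respect to the $\ell_p$-norm, so Assumption~\ref{aspt} holds with $L=p-1$ and $D=D_p$. Next, since $x^*$ is the unique unconstrained minimizer of $f$ and $x^*\notin\mathcal{C}$ by hypothesis, we have $\mathcal{C}\cap\argmin_{\mathbb{R}^n}f=\varnothing$, so the exterior condition is met. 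Finally I would identify the constant $c=\min_\mathcal{C}\|\nabla f\|_*$: the dual norm here is the $\ell_q$-norm with $q=p/(p-1)$, and the computation inside the proof of Lemma~\ref{lem:ellp2} gives $\|\nabla f(x)\|_q=\|x-x^*\|_p$, whence $c=\min_{x\in\mathcal{C}}\|x-x^*\|_p=\operatorname{dist}_p(x^*,\mathcal{C})=c_p>0$.

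With these in hand, Theorem~\ref{th:out} yields the linear rate
\begin{align*}
 f(x_t)-\min_\mathcal{C}f
 \leq\frac{(p-1)D_p^2}{2}\left(1-\min\left\{\frac{1}{2},\frac{\alpha_p c_p}{4(p-1)}\right\}\right)^{t-1}.
\end{align*}
I would then relate the primal gap to the target accuracy. Since $\min_\mathcal{C}f=(1/2)\operatorname{dist}_p(x^*,\mathcal{C})^2=(1/2)c_p^2$ and $f(x_t)=(1/2)\|x_t-x^*\|_p^2$, the inequality $f(x_t)-\min_\mathcal{C}f\leq\epsilon^2/2$ is exactly the accuracy requirement $\|x_t-x^*\|_p^2-\operatorname{dist}_p(x^*,\mathcal{C})^2\leq\epsilon^2$. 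It then remains to count how many iterations force the right-hand side of the rate below $\epsilon^2/2$. Writing $\rho=1-\min\{1/2,\alpha_p c_p/(4(p-1))\}$ and using $-\ln(1-x)\geq x$, the contraction factor satisfies $\ln(1/\rho)\geq\min\{1/2,\alpha_p c_p/(4(p-1))\}$, so taking logarithms gives
\begin{align*}
 T
 \leq 1+\max\left\{2,\frac{4(p-1)}{\alpha_p c_p}\right\}\ln\!\left(\frac{(p-1)D_p^2}{\epsilon^2}\right).
\end{align*}
Since the cardinality of $x_T$ is at most $T+1$, this is $\mathcal{O}((p/(c_p\alpha_p))\ln(1/\epsilon))$, as claimed.

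The calculation is essentially routine once the constant $c$ is pinned down, so I expect the only point requiring genuine care to be the third ingredient: confirming that $\min_\mathcal{C}\|\nabla f\|_*$ equals $\operatorname{dist}_p(x^*,\mathcal{C})$. This hinges on correctly pairing the $\ell_p$-norm with its dual $\ell_q$-norm and on reusing the gradient identity $\|\nabla f(x)\|_q=\|x-x^*\|_p$ already established in Lemma~\ref{lem:ellp2}; it is precisely this that makes $c=c_p$, which is strictly positive because $x^*\notin\mathcal{C}$ and $\mathcal{C}$ is compact. A secondary bookkeeping point is that the $\max\{2,\cdot\}$ collapses to the $4(p-1)/(\alpha_p c_p)$ branch in the relevant regime, so that the $p/(c_p\alpha_p)$ factor—rather than a bare constant—governs the leading behavior of the bound.
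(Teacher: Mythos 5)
Your proposal is correct and follows essentially the same route as the paper: both invoke Theorem~\ref{th:out} with $L=p-1$ from Lemma~\ref{lem:ellp2} and pin down the constant $c$ by the computation $\|\nabla f(x)\|_{p/(p-1)}=\|x-x^*\|_p$, giving $c=\operatorname{dist}_p(x^*,\mathcal{C})=c_p>0$. The paper's proof is just terser, leaving the iteration-count arithmetic and the translation of the primal gap into $\|x-x^*\|_p^2-\operatorname{dist}_p(x^*,\mathcal{C})^2\leq\epsilon^2$ implicit, whereas you spell them out.
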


\begin{proof}
 The bound follows from Theorem~\ref{th:out} and Lemma~\ref{lem:ellp2}. Since for $f\colon x\in\mathbb{R}^n\mapsto(1/2)\|x-x^*\|_p^2$, we have $\nabla f(x)=\|x-x^*\|_p^{2-p}\operatorname{sign}(x-x^*)|x-x^*|^{p-1}$ for all $x\in\mathbb{R}^n$, where $\operatorname{sign}(x-x^*)|x-x^*|^{p-1}=(\operatorname{sign}([x-x^*]_i)|[x-x^*]_i|^{p-1})_{i\in\llbracket1,n\rrbracket}\in\mathbb{R}^n$, we obtain
 \begin{align*}
  \|\nabla f(x)\|_{p/(p-1)}
  &=\|x-x^*\|_p^{2-p}\|\operatorname{sign}(x-x^*)|x-x^*|^{p-1}\|_{p/(p-1)}\\
  &=\|x-x^*\|_p^{2-p}\left(\sum_{i=1}^n|[x-x^*]_i|^p\right)^{(p-1)/p}\\
  &=\|x-x^*\|_p^{2-p+p-1}\\
  &=\|x-x^*\|_p.
 \end{align*}
 Thus, $c_p=\min_{x\in\mathcal{C}}\|\nabla f\|_{p/(p-1)}=\operatorname{dist}_p(x^*,\mathcal{C})$.
\end{proof}

\begin{corollary}
\label{cor:uni1}
 Let $p\in\left[2,+\infty\right[$ and $x^*\in\mathbb{R}^n\setminus\mathcal{C}$. Suppose that $\mathcal{C}$ is $(\alpha_p,q_p)$-uniformly convex with respect to the $\ell_p$-norm, where $q_p>2$. Then by running FW on problem~\eqref{pb:cara} with the closed-loop strategy~\eqref{p:closed}, we obtain a point $x\in\mathcal{C}$ with cardinality $\mathcal{O}(((pD_p^2)^{(q_p-2)/q_p}(1+\beta_1)+(p/\beta_2)/(\alpha_pc_p)^{2/q})/\epsilon^{2(q_p-2)/q_p})$ satisfying
 \begin{align*}
  \|x-x^*\|_p^2-\operatorname{dist}_p(x^*,\mathcal{C})^2\leq\epsilon^2,
 \end{align*}
 where $\beta_1=(2-2^{(q_p-2)/q_p})/(2^{(q_p-2)/q_p}-1)$, $\beta_2=(q_p-2)/q_p-(2/q_p)(2^{(q_p-2)/q_p}-1)$, and $c_p=\operatorname{dist}_p(x^*,\mathcal{C})>0$.
\end{corollary}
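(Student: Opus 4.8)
The plan is to invoke Theorem~\ref{th:uni1} with the objective $f\colon x\mapsto(1/2)\|x-x^*\|_p^2$ and then convert its convergence rate into a cardinality bound. First I would record the relevant constants from Lemma~\ref{lem:ellp2}: $f$ is convex and $(p-1)$-smooth with respect to the $\ell_p$-norm, so $L=p-1$ and $D=D_p$, while $\mathcal{C}$ supplies the uniform-convexity data $\alpha=\alpha_p$ and $q=q_p>2$. Since $x^*\in\mathbb{R}^n\backslash\mathcal{C}$ and $\argmin_{\mathbb{R}^n}f=\{x^*\}$ (the $\ell_p$-norm is strictly convex for $p\in\left[2,+\infty\right[$), the hypothesis $\mathcal{C}\cap\argmin_{\mathbb{R}^n}f=\varnothing$ of Theorem~\ref{th:uni1} holds, and compactness of $\mathcal{C}$ forces $\operatorname{dist}_p(x^*,\mathcal{C})>0$.

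Next I would identify the constant $c=\min_\mathcal{C}\|\nabla f\|_*$ appearing in Theorem~\ref{th:uni1}. Here the dual of the $\ell_p$-norm is the $\ell_{p/(p-1)}$-norm, and by the gradient formula~\eqref{ellp:grad}---exactly the computation already carried out in the proof of Corollary~\ref{cor:out}---one has $\|\nabla f(x)\|_{p/(p-1)}=\|x-x^*\|_p$ for every $x$. Minimizing over $\mathcal{C}$ gives $c=\operatorname{dist}_p(x^*,\mathcal{C})=c_p>0$, so Theorem~\ref{th:uni1} applies with this value of $c$ (and the $\beta_1,\beta_2$ it produces are exactly those in the statement).

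I would then translate the primal gap into the stated accuracy. Since $\min_\mathcal{C}f=(1/2)\operatorname{dist}_p(x^*,\mathcal{C})^2$, achieving $\|x-x^*\|_p^2-\operatorname{dist}_p(x^*,\mathcal{C})^2\leq\epsilon^2$ is equivalent to $f(x)-\min_\mathcal{C}f\leq\epsilon^2/2$. Writing the numerator of the Theorem~\ref{th:uni1} bound as $M$, I set $M/(t+\beta_1)^{q_p/(q_p-2)}\leq\epsilon^2/2$ and solve for $t$, obtaining $t=\mathcal{O}\big((M/\epsilon^2)^{(q_p-2)/q_p}\big)$ iterations. Because FW adds at most one vertex per iteration, the cardinality is at most $t+1$, hence $\mathcal{O}\big(M^{(q_p-2)/q_p}/\epsilon^{2(q_p-2)/q_p}\big)$.

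The remaining work---and the only real bookkeeping---is to raise $M$ to the power $(q_p-2)/q_p$ and simplify. Using $\max\{a,b\}^s\leq a^s+b^s$ for $s=(q_p-2)/q_p>0$, the first term of $M$ contributes $((p-1)D_p^2/2)^{(q_p-2)/q_p}(1+\beta_1)$ and the second contributes a term proportional to $((p-1)/\beta_2)(1/(\alpha_pc_p))^{2/q_p}$; indeed the exponents $q_p/(q_p-2)$ and $2/(q_p-2)$ attached to $(1+\beta_1)$, $(p-1)/\beta_2$, and $4/(\alpha_pc_p)$ collapse to $1$, $1$, and $2/q_p$ respectively, while the absolute constants $4^{(q_p-2)/q_p}$ and $4^{2/q_p}$ are absorbed into the $\mathcal{O}$. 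Since $p-1=\Theta(p)$, summing the two contributions yields the claimed numerator $(pD_p^2)^{(q_p-2)/q_p}(1+\beta_1)+(p/\beta_2)/(\alpha_pc_p)^{2/q_p}$. The point to watch is keeping the fractional exponents consistent so that $(1+\beta_1)$ and $(\alpha_pc_p)^{2/q_p}$ land with exactly the right powers.
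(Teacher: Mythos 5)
Your proposal is correct and takes essentially the same route as the paper, which proves this corollary by the ``same arguments as Corollary~\ref{cor:out}'': instantiate Theorem~\ref{th:uni1} with $L=p-1$ and $D=D_p$ from Lemma~\ref{lem:ellp2}, use the gradient identity $\|\nabla f(x)\|_{p/(p-1)}=\|x-x^*\|_p$ to get $c=c_p=\operatorname{dist}_p(x^*,\mathcal{C})>0$, and convert the rate into a cardinality bound exactly as in your exponent bookkeeping. One cosmetic remark: $\argmin_{\mathbb{R}^n}f=\{x^*\}$ already follows from the norm being definite (the minimum value $0$ is attained only at $x^*$), so your appeal to strict convexity of the $\ell_p$-norm is unnecessary, though harmless.
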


\begin{proof}
 The proof follows the same arguments as in the proof of Corollary~\ref{cor:out}.
\end{proof}

\begin{remark}
 In~\eqref{acc} and Corollaries~\ref{cor:out}--\ref{cor:uni1}, if $p=2$ then the same results hold with
 \begin{align*}
  \|x-\operatorname{proj}_2(x^*,\mathcal{C})\|_p\leq\epsilon.
 \end{align*}
 Indeed, let $\pi_2=\operatorname{proj}_2(x^*,\mathcal{C})$. Then $\langle x-\pi_2,x^*-\pi_2\rangle\leq0$, so
 \begin{align*}
  \|x-x^*\|_2^2
  &=\|x-\pi_2\|_2^2+\|\pi_2-x^*\|_2^2+2\langle x-\pi_2,\pi_2-x^*\rangle\\
  &\geq\|x-\pi_2\|_2^2+\|\pi_2-x^*\|_2^2.
 \end{align*}
\end{remark}

\section{Computational experiments}
\label{sec:exp}

We compare the cardinality bounds of the solutions obtained by FW (Algorithm~\ref{fw}), FCFW (Algorithm~\ref{fcfw}), NEP-FW (Algorithm~\ref{nep}), and the Away-Step Frank-Wolfe algorithm (AFW) \cite{wolfe70} on the approximate Carath\'eodory problem~\eqref{pb:cara} for $p\in\left[2,+\infty\right[$. While FW moves only towards vertices, AFW is a variant of FW that allows to move also away from vertices, and converges at a linear rate for smooth strongly convex objectives \cite{lacoste15}. Although the objective in~\eqref{pb:cara} is not strongly convex with respect to the $\ell_p$-norm when $p\in\left]2,+\infty\right[$, it is still interesting to investigate its performance. We also compared to the Pairwise Frank-Wolfe algorithm \cite{lacoste15} but it performed very similarly to AFW.

The experiments were run on a laptop under Linux Ubuntu 20.04 with Intel Core i7-10750H. The code is available at \href{https://github.com/cyrillewcombettes/approxcara}{\textcolor{blue}{\ttfamily https://github.com/cyrillewcombettes/approxcara}}.

\subsection{Dense vs.\ sparse target \texorpdfstring{$x^*$}{x*}}

We generated a set $\mathcal{V}\subset\mathbb{R}^{500}$ of $501$ random points and let $\mathcal{C}=\operatorname{conv}\mathcal{V}$. Then, we generated the target point $x^*\in\mathcal{C}$ as a random convex combination of points in $\mathcal{V}$ or as a sparse random convex combination of points in $\mathcal{V}$, i.e., we randomly selected $25$ points from $\mathcal{V}$ and we generated $x^*$ as a convex combination of these points only. Figure~\ref{fig:fcfw} plots the distance of the iterate $x_t$ to the target $x^*$ in the $\ell_p$-norm as a function of its cardinality, as given by the construction of the algorithm, for three arbitrary values $p=2$, $p=3$, and $p=7$.

\begin{figure}[H]
 \centering{\includegraphics[scale=0.6]{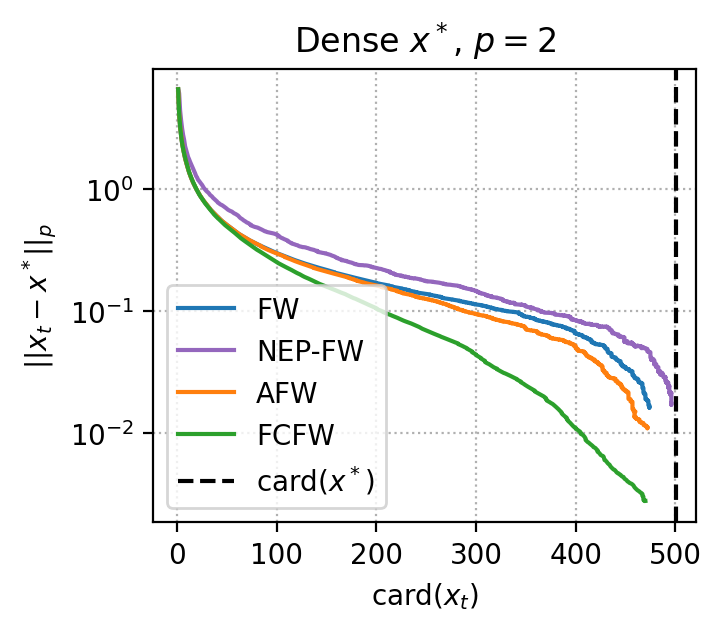}
 \hspace{10mm}\includegraphics[scale=0.6]{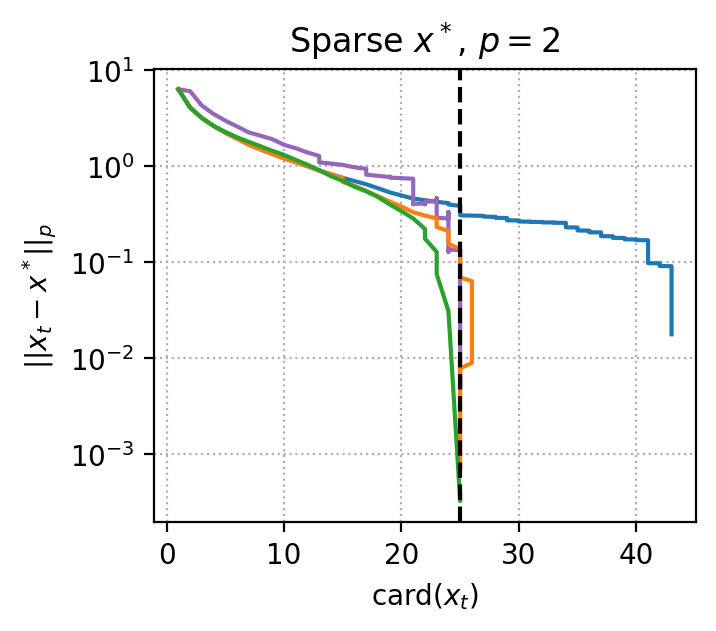}
 
 \vspace{2mm}\includegraphics[scale=0.6]{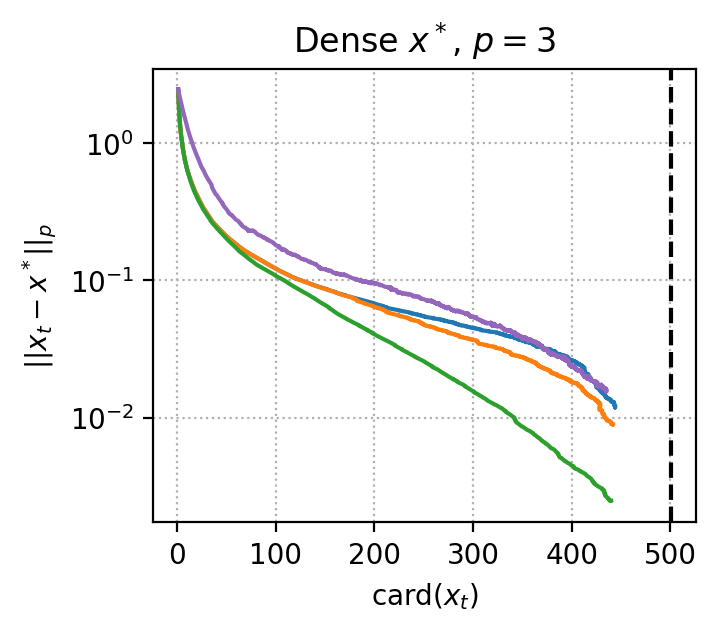}
 \hspace{10mm}\includegraphics[scale=0.6]{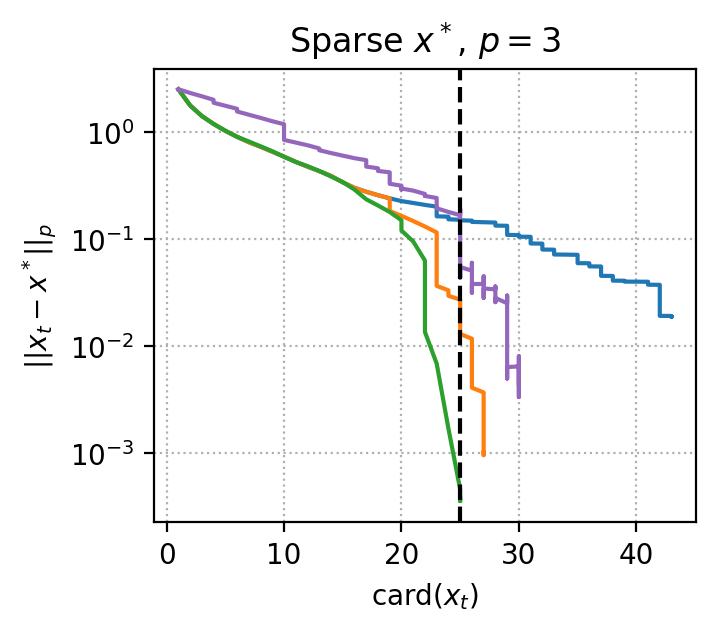}
 
 \vspace{2mm}
 \includegraphics[scale=0.6]{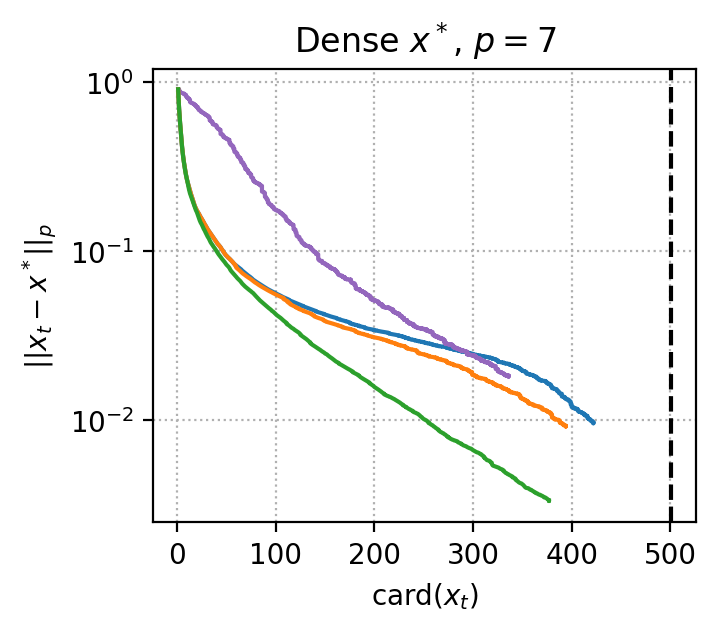}
 \hspace{10mm}\includegraphics[scale=0.6]{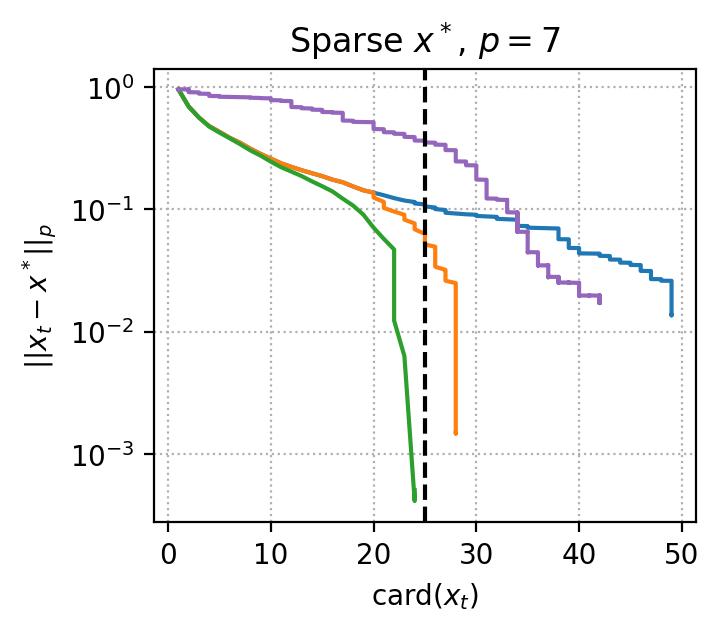}}
 \caption{Accuracy vs.~cardinality of the iterates generated by FW, NEP-FW, AFW, and FCFW.}
 \label{fig:fcfw}
\end{figure}

As expected, FCFW is the best performing algorithm, followed by AFW. In the instance ``Sparse $x^*$, $p=2$'', we can see that AFW took a full away step, which decreased the cardinality of the iterate by $1$. In the limit, NEP-FW improves on FW particularly when the target $x^*$ is sparse; note that we chose $x_0\in\mathcal{V}$ randomly and did not use a warm start. Only FCFW is able to recover an optimal convex decomposition on each instance, i.e., to find a solution with arbitrary accuracy and cardinality no greater than $\operatorname{card}(x^*)$. Table~\ref{tab:card} reports the cardinality of the solution obtained by each algorithm to reach accuracy $\epsilon=0.02$.

\begin{table}[h]
 \caption{Cardinality of the first iterate $x_t$ satisfying $\|x_t-x^*\|_p<0.02$.}
\label{tab:card}
\centering{
 \begin{tabular}{rrrrrrr}
  \toprule
  $\operatorname{card}(x^*)$&$p$&\textbf{FW}&\textbf{NEP-FW}&\textbf{AFW}&\textbf{FCFW}\\
  \midrule
  $501$&$2$&$470$&$496$&$453$&$368$\\
  $25$&$2$&$43$&$25$&$26$&$25$\\
  $501$&$3$&$417$&$413$&$388$&$275$\\
  $25$&$3$&$42$&$29$&$25$&$22$\\
  $501$&$7$&$349$&$325$&$294$&$175$\\
  $25$&$7$&$49$&$40$&$28$&$22$\\
  \bottomrule
 \end{tabular}
 }
\end{table}

\subsection{Lower bound}
\label{sec:lower}

We compare FW, AFW, and FCFW to the lower bound in \cite[Sec.~5.1]{mirrokni17cara}. Let $\mathcal{C}$ be the convex hull of the $\ell_p$-normalized columns of the Hadamard matrix $H_n$ of dimension $n$ from Sylvester's construction, i.e., $\mathcal{C}=\operatorname{conv}(H_n/n^{1/p})$, and let $x^*=(H_n/n^{1/p})1/n=e_1/n^{1/p}$ be the uniform convex combination of the columns, where $e_1\in\mathbb{R}^n$ denotes the first canonical vector. In this setting, \cite[Thm.~5.3]{mirrokni17cara} claims that for any $x\in\operatorname{conv}(H_n/n^{1/p})$ satisfying $\|x-x^*\|_p\leq\epsilon$, then $x$ has cardinality $s\geq\min\{1/\epsilon^2,n\}$. However, in their proof they use the inequality
\begin{align*}
 \frac{1}{\epsilon^2+1/n}
 \geq\frac{1}{\max\{\epsilon^2,1/n\}},
\end{align*}
which does not hold. Hence, for completeness, we present a minor correction to their lower bound in Theorem~\ref{th:lower}.

\begin{theorem}
\label{th:lower}
 Let $p\in\left[2,+\infty\right[$, $n\in\{2^k\mid k\in\mathbb{N}\}$, $H_n$ be the Hadamard matrix of dimension $n$ from Sylvester's construction, $\mathcal{C}=\operatorname{conv}(H_n/n^{1/p})$ be the convex hull of the $\ell_p$-normalized columns of $H_n$, and $x^*= e_1/n^{1/p}\in\mathcal{C}$. Let $\epsilon>0$ and $x\in\mathcal{C}$ such that $\|x-x^*\|_p\leq\epsilon$. Then $x$ is the convex combination of at least $1/(\epsilon^2+1/n)$ vertices.
\end{theorem}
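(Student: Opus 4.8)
The plan is to carry out the estimate in the $\ell_2$-geometry, where the Hadamard columns are orthogonal, and only at the end transfer to the $\ell_p$-norm. Write $v_1,\dots,v_n$ for the $\ell_p$-normalized columns, $v_j=h_j/n^{1/p}$ with $h_j\in\{\pm1\}^n$ the $j$-th column of $H_n$. Since the columns of a Hadamard matrix satisfy $\langle h_i,h_j\rangle=n\delta_{ij}$, the $v_j$ are pairwise $\ell_2$-orthogonal with $\|v_j\|_2^2=n^{1-2/p}$; in particular they are linearly independent, so every point of $\mathcal{C}$ has a \emph{unique} representation as a convex combination of the $v_j$, and its cardinality is well-defined. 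I would also record that $x^*=e_1/n^{1/p}$ is exactly the uniform combination $\frac1n\sum_j v_j$: because the first row of $H_n$ is all ones while every other row is orthogonal to it, one has $\sum_j h_j=n\,e_1$.

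Next I would write $x=\sum_j\lambda_j v_j$ with $\lambda_j\ge0$, $\sum_j\lambda_j=1$, and let $s=|\{j:\lambda_j>0\}|$ be its cardinality. Using $x^*=\frac1n\sum_j v_j$ together with orthogonality, the cross terms vanish and
\[
\|x-x^*\|_2^2=\sum_j\left(\lambda_j-\tfrac1n\right)^2\|v_j\|_2^2=n^{1-2/p}\left(\sum_j\lambda_j^2-\tfrac1n\right).
\]
Applying Cauchy--Schwarz to the $s$ positive weights gives $\sum_j\lambda_j^2\ge1/s$, so that $\|x-x^*\|_2^2\ge n^{1-2/p}\left(1/s-1/n\right)$.

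The final step is the norm conversion, and this is the one I expect to require the most care, since for $p\ge2$ the pointwise bound $\|\cdot\|_p\le\|\cdot\|_2$ runs the wrong way. The inequality to use is the reverse one, $\|y\|_2\le n^{1/2-1/p}\|y\|_p$, equivalently $\|y\|_p^2\ge n^{2/p-1}\|y\|_2^2$, valid for all $y\in\mathbb{R}^n$ when $p\ge2$. Applied to $y=x-x^*$, the dimensional factor $n^{2/p-1}$ cancels the factor $n^{1-2/p}$ coming from $\|v_j\|_2^2$ exactly, leaving the dimension-free estimate $\|x-x^*\|_p^2\ge1/s-1/n$. Since $\|x-x^*\|_p\le\epsilon$ by hypothesis, this yields $\epsilon^2\ge1/s-1/n$, i.e.\ $s\ge1/(\epsilon^2+1/n)$, as claimed. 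Everything apart from tracking this cancellation is routine, the uniform-combination identity and the orthogonality of the Hadamard columns making the $\ell_2$ computation immediate.
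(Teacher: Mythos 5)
Your proof is correct and is essentially the paper's own argument (the paper's Theorem~\ref{th:lower} is exactly the proof of \cite[Thm.~5.3]{mirrokni17cara} with the invalid inequality $1/(\epsilon^2+1/n)\geq1/\max\{\epsilon^2,1/n\}$ omitted): $\ell_2$-orthogonality of the Hadamard columns gives $\|x-x^*\|_2^2=n^{1-2/p}\bigl(\sum_j\lambda_j^2-1/n\bigr)$, Cauchy--Schwarz gives $\sum_j\lambda_j^2\geq1/s$, and the comparison $\|y\|_2\leq n^{1/2-1/p}\|y\|_p$ for $p\geq2$ transfers this to the $\ell_p$-norm with the dimensional factors cancelling, yielding $s\geq1/(\epsilon^2+1/n)$ directly. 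Your care with the direction of the norm inequality and your observation that the representation is unique (so the cardinality is well-defined) are exactly the right points to check, and both are handled correctly.
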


Figure~\ref{fig:lower} compares FW, AFW, FCFW, and the corrected lower bound $s\in\llbracket1,n\rrbracket\mapsto\epsilon=\sqrt{1/s-1/n}$, for $n=64$ and two arbitrary values $p=4$ and $p=13$.

\begin{figure}[h]
 \centering{\includegraphics[scale=0.6]{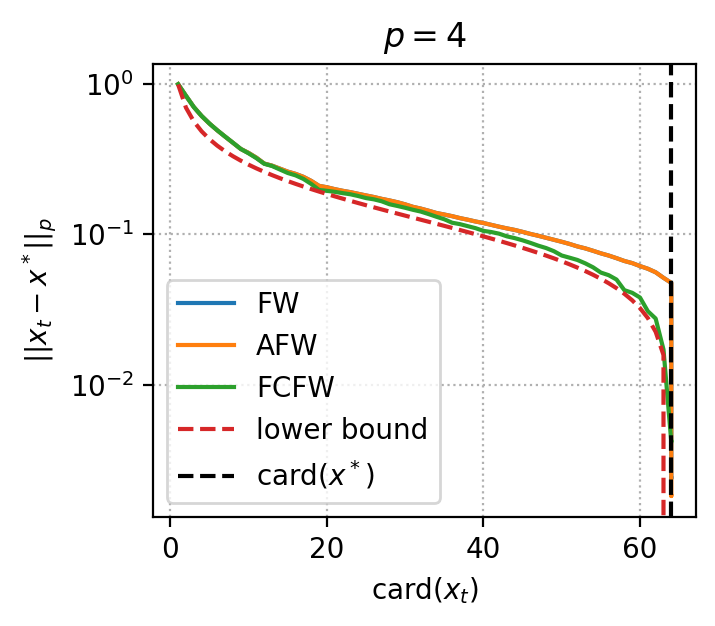}
 \hspace{10mm}\includegraphics[scale=0.6]{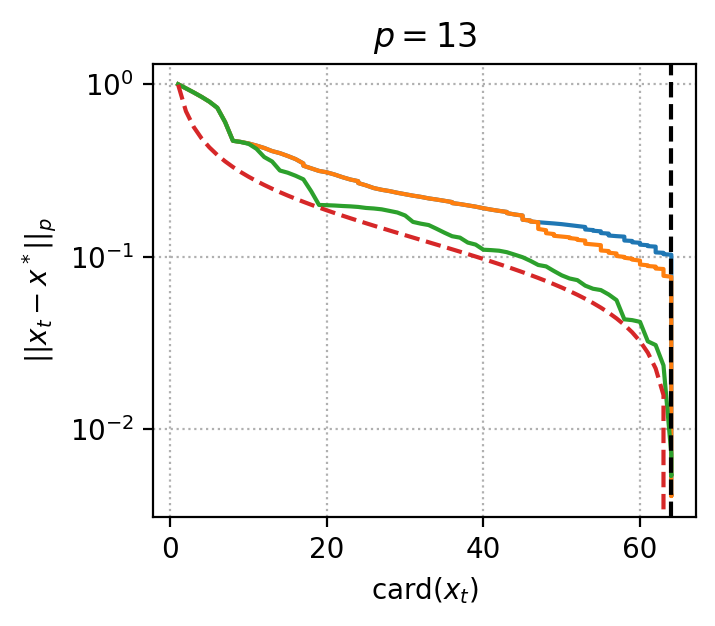}}
 \caption{Cardinality of the iterates produced by FW, AFW, and FCFW, and the lower bound from Theorem~\ref{th:lower}.}
 \label{fig:lower}
\end{figure}

FCFW again demonstrates its performance and almost matches the lower bound. This highlights its significance for the approximate Carath\'eodory problem. However, it remains an open problem to derive a precise convergence rate for FCFW, as the current analysis is transferred from the analyses of FW and AFW \cite{lacoste15}.

\section{Final remarks}
\label{sec:ccl}

We have demonstrated that the Frank-Wolfe algorithm provides a simple implementation of a solution with cardinality $\mathcal{O}(pD_p^2/\epsilon^2)$ to the approximate Carath\'eodory problem in the $\ell_p$-norm, where $p\in\left[2,+\infty\right[$. When $x^*$ is in the interior of $\mathcal{C}$, which may be likely in practice, the algorithm naturally adapts and generates a solution with cardinality $\mathcal{O}(p(D_p/r_p)^2\ln(1/\epsilon))$, where $r_p$ is the radius of an ball centered at $x^*$ and included in $\mathcal{C}$. This is in contrast with the method in \cite{mirrokni17cara} which requires knowledge of $r_p$. The Frank-Wolfe algorithm also adapts to the geometry of $\mathcal{C}$, and generates a solution with an improved cardinality bound when $\mathcal{C}$ is uniformly convex. When $x^*$ is the convex combination of a subset of vertices with small diameter, better cardinality bounds are obtained via a variant of the Frank-Wolfe algorithm with an enhanced oracle. When $p\in\left[1,2\right[\cup\{+\infty\}$, new bounds are proposed via a nonsmooth variant of the algorithm. Lastly, we addressed the problem of finding sparse approximate projections in the $\ell_p$-norm. In practice, when $p\in\left[2,+\infty\right[$, the Fully-Corrective Frank-Wolfe algorithm is very efficient and can generate a solution with near-optimal cardinality. However, a precise estimation has yet to be derived.

\subsection*{Acknowledgments}
\label{sec:ack}

Research reported in this paper was partially supported by NSF CAREER Award CMMI-1452463 and the Deutsche Forschungsgemeinschaft (DFG) through the DFG Cluster of Excellence MATH+. We thank Grigory Ivanov for providing the cardinality bound from \cite[Lem.~D]{bourgain89} in the case $p\in\left]1,2\right[$ and for bringing up interesting pointers to related problems.

\appendix

\section{Additional proofs}
\label{apx:proofs}

\begin{lemma}[Lemma~\ref{lem:init}]
 Let Assumption~\ref{aspt} hold and consider FW (Algorithm~\ref{fw}) with the open-loop strategy~\eqref{open} or the closed-loop strategy~\eqref{closed}. Then
 \begin{align*}
  f(x_1)-\min_\mathcal{C}f
  \leq\frac{LD^2}{2}.
 \end{align*}
\end{lemma}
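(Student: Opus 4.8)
The plan is to treat both step-size strategies simultaneously by showing that each produces a first iterate no worse than the one obtained with the single full step $\gamma_0 = 1$. Let $x^\star \in \argmin_\mathcal{C} f$, write $g_0 = \langle x_0 - v_0, \nabla f(x_0)\rangle$ for the Frank-Wolfe gap at $x_0$, and let
\begin{align*}
 \phi(\gamma) = f(x_0) - \gamma g_0 + \tfrac{L}{2}\gamma^2\|v_0 - x_0\|^2
\end{align*}
denote the quadratic model coming from $L$-smoothness, so that $f(x_1) \leq \phi(\gamma_0)$ by the smoothness inequality applied between $x_0$ and $x_1 = x_0 + \gamma_0(v_0 - x_0)$. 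For the open-loop strategy~\eqref{open}, $\gamma_0 = 2/(0+2) = 1$, hence $f(x_1) \leq \phi(1)$. For the closed-loop strategy~\eqref{closed}, $\gamma_0$ is by construction~\eqref{closed:opt} the minimizer of $\phi$ over $\left[0,1\right]$; since $1 \in \left[0,1\right]$, again $f(x_1) \leq \phi(\gamma_0) \leq \phi(1)$. Thus in both cases it suffices to bound $\phi(1) - f(x^\star)$.

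The second step is to cancel the primal gap at $x_0$. Evaluating gives $\phi(1) = f(x_0) - g_0 + \tfrac{L}{2}\|v_0 - x_0\|^2$. Because $v_0$ minimizes $\langle \cdot, \nabla f(x_0)\rangle$ over $\mathcal{C}$ and $x^\star \in \mathcal{C}$, we have $\langle v_0, \nabla f(x_0)\rangle \leq \langle x^\star, \nabla f(x_0)\rangle$, so $g_0 \geq \langle x_0 - x^\star, \nabla f(x_0)\rangle \geq f(x_0) - f(x^\star)$, the last inequality by convexity. Substituting this into $\phi(1)$ cancels the (possibly large) term $f(x_0) - f(x^\star)$:
\begin{align*}
 \phi(1) - f(x^\star) = \bigl(f(x_0) - f(x^\star)\bigr) - g_0 + \tfrac{L}{2}\|v_0 - x_0\|^2 \leq \tfrac{L}{2}\|v_0 - x_0\|^2.
\end{align*}
Finally, $\|v_0 - x_0\| \leq D$ since $v_0, x_0 \in \mathcal{C}$ and $D$ is the diameter of $\mathcal{C}$, giving $f(x_1) - \min_\mathcal{C} f \leq LD^2/2$.

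I expect no real obstacle here; the only points needing care are the uniform handling of the two step-sizes — which hinges on the closed-loop step being the minimizer over $\left[0,1\right]$ of exactly the quadratic model $\phi$ that the open-loop choice evaluates at $\gamma = 1$ — and the observation that the Frank-Wolfe gap $g_0$ dominates the primal gap $f(x_0) - f(x^\star)$. The latter is precisely what removes any dependence on the (uncontrolled) quality of the arbitrary starting point $x_0$ and leaves only the diameter term.
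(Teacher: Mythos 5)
Your proof is correct and follows essentially the same route as the paper's: bound $f(x_1)$ by the smoothness quadratic model evaluated at $\gamma=1$ (directly for the open-loop step, and via optimality of the minimizer in~\eqref{closed:opt} for the closed-loop step), then use optimality of $v_0$ together with convexity to cancel the primal gap at $x_0$ and retain only the diameter term. The only difference is cosmetic — you unify the two step-size cases through the model $\phi(1)$ upfront, whereas the paper treats the open-loop case first and reduces the closed-loop case to it.
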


\begin{proof}
 Let $x^*\in\argmin_\mathcal{C}f$ be a solution. If the open-loop strategy is used, then $\gamma_0=1$ so by smoothness of $f$, optimality of $v_0$ (Line~\ref{fw:lmo}), and convexity of $f$,
 \begin{align*}
  f(x_1)
   &\leq f(x_0)+\gamma_0\langle v_0-x_0,\nabla f(x_0)\rangle+\frac{L}{2}\gamma_0^2\|v_0-x_0\|^2\\
   &=f(x_0)+\langle v_0-x_0,\nabla f(x_0)\rangle+\frac{L}{2}\|v_0-x_0\|^2\\
   &\leq f(x_0)+\langle x^*-x_0,\nabla f(x_0)\rangle+\frac{LD^2}{2}\\
   &\leq f(x^*)+\frac{LD^2}{2}.
 \end{align*}
 If the closed-loop strategy is used, then by smoothness of $f$ and by optimality of the strategy~\eqref{closed:opt},
 \begin{align*}
   f(x_1)
   &\leq f(x_0)+\gamma_0\langle v_0-x_0,\nabla f(x_0)\rangle+\frac{L}{2}\gamma_0^2\|v_0-x_0\|^2\\
   &=\min_{\gamma\in\left[0,1\right]}f(x_0)+\gamma\langle v_0-x_0,\nabla f(x_0)\rangle+\frac{L}{2}\gamma^2\|v_0-x_0\|^2\\
   &\leq f(x_0)+\langle v_0-x_0,\nabla f(x_0)\rangle+\frac{L}{2}\|v_0-x_0\|^2,
 \end{align*}
 and we conclude as before.
\end{proof}

\begin{theorem}[Theorem~\ref{th:hcgs}]
 Let Assumption~\ref{aspt2} hold and consider HCGS (Algorithm~\ref{hcgs}) with $\beta_t\leftarrow2(D_2/G_2)/\sqrt{t+2}$ for all $t\in\mathbb{N}$ and the open-loop strategy~\eqref{open}. Then for all $t\geq2$,
 \begin{align*}
  f(x_t)-\min_\mathcal{C}f
  \leq\frac{4G_2D_2}{\sqrt{t+1}}.
 \end{align*}
\end{theorem}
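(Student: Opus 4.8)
The plan is to run the standard Frank-Wolfe descent argument on the \emph{smoothed} objective $f_{\beta_t}$ at each step, while carefully accounting for the fact that the smoothing parameter $\beta_t$, and hence the function being descended, changes from one iteration to the next. Throughout, fix $x^*\in\argmin_\mathcal{C}f$ and write $\Delta_t=f_{\beta_t}(x_t)-\min_\mathcal{C}f$ for the primal gap measured against the current smoothing. Three ingredients drive the analysis: the one-step decrease of $f_{\beta_t}$ coming from its $1/\beta_t$-smoothness (Lemma~\ref{lem:fbeta}); the sandwich $f_\beta\leq f\leq f_\beta+\beta G_2^2/2$ from \cite[Lem.~4.2]{argyriou14}, which lets me pass between $f$ and $f_{\beta_t}$; and a bound on how much $f_\beta(x)$ moves as $\beta$ decreases, namely $f_{\beta_{t+1}}(x)-f_{\beta_t}(x)\leq(G_2^2/2)(\beta_t-\beta_{t+1})$ for $\beta_{t+1}\leq\beta_t$, which follows from $\|\nabla f_\beta\|_2\leq G_2$ together with the envelope formula~\eqref{fbeta:grad}.

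First I would establish the one-step recursion. Using $1/\beta_t$-smoothness of $f_{\beta_t}$ along the update $x_{t+1}=x_t+\gamma_t(v_t-x_t)$, the optimality of $v_t$ for the linear minimization against $\nabla f_{\beta_t}(x_t)$, the convexity of $f_{\beta_t}$, and $f_{\beta_t}(x^*)\leq f(x^*)=\min_\mathcal{C}f$, I get $f_{\beta_t}(x_{t+1})-\min_\mathcal{C}f\leq(1-\gamma_t)\Delta_t+\gamma_t^2D_2^2/(2\beta_t)$. I then convert the left-hand side from $f_{\beta_t}(x_{t+1})$ to $f_{\beta_{t+1}}(x_{t+1})=\Delta_{t+1}+\min_\mathcal{C}f$ using the cross-$\beta$ bound above, which yields
\begin{align*}
 \Delta_{t+1}\leq(1-\gamma_t)\Delta_t+\frac{\gamma_t^2D_2^2}{2\beta_t}+\frac{G_2^2}{2}(\beta_t-\beta_{t+1}).
\end{align*}

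Next I would substitute the schedules $\gamma_t=2/(t+2)$ and $\beta_t=2(D_2/G_2)/\sqrt{t+2}$. A direct computation gives $\gamma_t^2D_2^2/(2\beta_t)=G_2D_2/(t+2)^{3/2}$, while the mean value theorem gives $\beta_t-\beta_{t+1}\leq(D_2/G_2)(t+2)^{-3/2}$, so the combined error term is at most $3G_2D_2/(2(t+2)^{3/2})$. The recursion thus reads $\Delta_{t+1}\leq\frac{t}{t+2}\Delta_t+\frac{3G_2D_2}{2(t+2)^{3/2}}$, and I would solve it by induction, proving $\Delta_t\leq3G_2D_2/\sqrt{t+1}$: the inductive step reduces to the elementary inequality $t\sqrt{(t+2)/(t+1)}\leq t+3/2$, which follows from $\sqrt{1+1/(t+1)}\leq1+1/(2(t+1))$, and the base case holds since $\Delta_0\leq f(x_0)-\min_\mathcal{C}f\leq G_2D_2\leq3G_2D_2$. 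Applying the sandwich inequality once more then gives $f(x_t)-\min_\mathcal{C}f\leq\Delta_t+\beta_tG_2^2/2=\Delta_t+G_2D_2/\sqrt{t+2}\leq4G_2D_2/\sqrt{t+1}$, as claimed.

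The main obstacle is the moving smoothing parameter: unlike the fixed-$\beta$ analysis behind Theorem~\ref{th:fw:fbeta}, here the descent inequality is for $f_{\beta_t}$ while the next gap is measured against $f_{\beta_{t+1}}$, so the telescoping only closes after inserting the $O(\beta_t-\beta_{t+1})$ correction and checking that, under the chosen $\sqrt{t}$-type schedule for $\beta_t$, this correction is of the same $O((t+2)^{-3/2})$ order as the Frank-Wolfe curvature term. Balancing these two error sources against the $1/\sqrt{t}$ target rate, and keeping the constants tight enough to land at $4G_2D_2/\sqrt{t+1}$, is the delicate part; everything else is the routine smooth convex Frank-Wolfe estimate.
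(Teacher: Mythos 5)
Your proposal is correct and follows essentially the same route as the paper's proof: the identical one-step recursion built from the $1/\beta_t$-smooth Frank--Wolfe descent on $f_{\beta_t}$, the sandwich $f_\beta\leq f\leq f_\beta+\beta G_2^2/2$, and the cross-$\beta$ correction $f_{\beta_{t+1}}(x)\leq f_{\beta_t}(x)+(\beta_t-\beta_{t+1})G_2^2/2$ (the paper imports both from \cite[Lem.~4.2]{argyriou14}), under the same schedules. The only divergence is how the recursion is resolved --- you use induction with a mean-value bound $\beta_t-\beta_{t+1}\leq(D_2/G_2)(t+2)^{-3/2}$, reducing to the valid elementary inequality $t\sqrt{(t+2)/(t+1)}\leq t+3/2$, whereas the paper multiplies by $(t+1)(t+2)$ and telescopes with integral comparisons --- and both arrive at the intermediate $3G_2D_2/\sqrt{\cdot}$ bound and then add $\beta_tG_2^2/2$; your variant is marginally cleaner and in fact yields the stated bound for all $t\geq0$, not just $t\geq2$.
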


\begin{proof}
Let $x^*\in\argmin_\mathcal{C}f$ and $t\in\mathbb{N}$. By Lemma~\ref{lem:fbeta}, $f_{\beta_t}$ is convex and $1/\beta_t$-smooth with respect to the $\ell_2$-norm. By optimality of $v_t$ (Line~\ref{hcgs:lmo}), we have
 \begin{align*}
  f_{\beta_t}(x_{t+1})
  &\leq f_{\beta_t}(x_t)+\gamma_t\langle v_t-x_t,\nabla f_{\beta_t}(x_t)\rangle+\frac{\gamma_t^2}{2\beta_t}\|v_t-x_t\|_2^2\\
  &\leq f_{\beta_t}(x_t)+\gamma_t\langle x^*-x_t,\nabla f_{\beta_t}(x_t)\rangle+\frac{\gamma_t^2}{2\beta_t}\|v_t-x_t\|_2^2\\
  &\leq f_{\beta_t}(x_t)+\gamma_t(f_{\beta_t}(x^*)-f_{\beta_t}(x_t))+\frac{\gamma_t^2}{2\beta_t}\|v_t-x_t\|_2^2.
 \end{align*}
 By \cite[Lem.~4.2]{argyriou14}, $f_{\beta_t}(x^*)\leq f(x^*)$ and $f_{\beta_{t+1}}(x_{t+1})\leq f_{\beta_t}(x_{t+1})+(\beta_t-\beta_{t+1})G_2^2/2$, so
 \begin{align*}
  f_{\beta_{t+1}}(x_{t+1})-f(x^*)
  &\leq(1-\gamma_t)(f_{\beta_t}(x_t)-f(x^*))+\frac{\gamma_t^2D_2^2}{2\beta_t}+\frac{(\beta_t-\beta_{t+1})G_2^2}{2}\\
  &=\frac{t}{t+2}(f_{\beta_t}(x_t)-f(x^*))+G_2D_2\frac{\sqrt{t+2}}{(t+2)^2}+G_2D_2\frac{\sqrt{t+3}-\sqrt{t+2}}{\sqrt{(t+3)(t+2)}}.
 \end{align*}
 Let $\epsilon_s=f_{\beta_s}(x_s)-f(x^*)$ for all $s\in\mathbb{N}$. Then
 \begin{align*}
  (t+1)(t+2)\epsilon_{t+1}
  &=t(t+1)\epsilon_t+G_2D_2\frac{t+1}{\sqrt{t+2}}+G_2D_2\frac{(t+1)\sqrt{t+2}}{\sqrt{t+3}}(\sqrt{t+3}-\sqrt{t+2})\\
  &\leq t(t+1)\epsilon_t+G_2D_2\sqrt{t+2}+G_2D_2\frac{(t+1)\sqrt{t+2}}{\sqrt{t+3}}(\sqrt{t+3}-\sqrt{t+2}).
 \end{align*}
 By telescoping for $s\in\llbracket0,t\rrbracket$, we obtain
 \begin{align*}
  (t+1)(t+2)\epsilon_{t+1}
  &=G_2D_2\sum_{s=0}^t\sqrt{s+2}+G_2D_2\sum_{s=0}^t\frac{(s+1)\sqrt{s+2}}{\sqrt{s+3}}(\sqrt{s+3}-\sqrt{s+2})\\
  &\leq G_2D_2\sum_{s=0}^t\int_s^{s+1}\sqrt{u+2}\,\mathrm{d}u+G_2D_2\sum_{s=0}^t\frac{(s+1)\sqrt{s+2}}{\sqrt{s+3}}\sqrt{s+3}\\
  &\quad-G_2D_2\sum_{s=0}^{t}\frac{s\sqrt{s+1}}{\sqrt{s+2}}\sqrt{s+2}\\
  &=G_2D_2\left[\frac{2}{3}(u+2)^{3/2}\right]_0^{t+1}+G_2D_2\sum_{s=0}^t\frac{(s+1)\sqrt{s+2}}{\sqrt{s+3}}\sqrt{s+3}\\
  &\quad-G_2D_2\sum_{s=-1}^{t-1}\frac{(s+1)\sqrt{s+2}}{\sqrt{s+3}}\sqrt{s+3}\\
  &\leq\frac{2}{3}G_2D_2(t+3)^{3/2}+G_2D_2(t+1)\sqrt{t+2}.
 \end{align*}
 Thus,
 \begin{align*}
  f_{\beta_{t+1}}(x_{t+1})-f(x^*)
  &\leq\frac{2}{3}G_2D_2\frac{(t+3)^{3/2}}{(t+1)(t+2)}+\frac{G_2D_2}{\sqrt{t+2}}\\
  &\leq\frac{3G_2D_2}{\sqrt{t+2}},
 \end{align*}
 where the last inequality holds for all $t\geq1$. By \cite[Lem.~4.2]{argyriou14}, $f(x_{t+1})\leq f_{\beta_{t+1}}(x_{t+1})+\beta_{t+1}G_2^2/2$. Therefore, for all $t\geq1$,
 \begin{align*}
  f(x_{t+1})-f(x^*)
  &\leq\frac{3G_2D_2}{\sqrt{t+2}}+\frac{G_2D_2}{\sqrt{t+3}}\\
  &\leq\frac{4G_2D_2}{\sqrt{t+2}}.
 \end{align*}
\end{proof}

\bibliographystyle{abbrv}
{\small\bibliography{biblio}}

\end{document}